\theoremstyle{plain}
\newtheorem{theorem}{Theorem}[section]
\newtheorem{corollary}[theorem]{Corollary}
\newtheorem{proposition}[theorem]{Proposition}
\newtheorem{lemma}[theorem]{Lemma}
\newtheorem{claim}[theorem]{Claim}
\theoremstyle{definition}
\newtheorem{definition}[theorem]{Definition}
\newtheorem{remark}[theorem]{Remark}
\definecolor{lyucol}{rgb}{0.5, 0.1, 0.1}
\definecolor{gray}{rgb}{0.5, 0.5, 0.5}
\newlength{\dhatheight}
\pgfplotsset{compat = 1.17}
\newcommand{\eps}{\varepsilon}
\newcommand{\cD}{\mathcal{D}}
\renewcommand{\Pr}{\mathbb{P}}
\newcommand{\cV}{\mathcal{V}}
\newcommand{\cC}{\mathcal{C}}
\newcommand{\cA}{\mathcal{A}}
\newcommand{\cB}{\mathcal{B}}
\newcommand{\cE}{\mathcal{E}}
\newcommand{\cF}{\mathcal{F}}
\newcommand{\cG}{\mathcal{G}}
\newcommand{\cH}{\mathcal{H}}
\newcommand{\cT}{\mathcal{T}}
\newcommand{\cP}{\mathcal{P}}
\newcommand{\bo}{\mathbf{0}}
\title{\scshape
  On the first and second largest components in the percolated Random Geometric Graph}
\author[1,2]{Lyuben Lichev}
\author[1,2]{Bas Lodewijks}
\author[1,2,4]{Dieter Mitsche\footnote{Bas Lodewijks and Dieter Mitsche are supported by grant GrHyDy ANR-20-CE40-0002. Dieter Mitsche is also supported by grant Fondecyt 1220174.}}
\author[2,3]{Bruno Schapira}
\affil[1]{Univ. Jean Monnet, Saint-Etienne, France}
\affil[2]{Institut Camille Jordan, Lyon and Saint-Etienne, France}
\affil[3]{Univ. Aix-Marseille, Marseille, France}
\affil[4]{Pontif\'icia Univ. Cat\'olica, IMC, Santiago, Chile}
\begin{document}

\maketitle
 
\begin{abstract}
The percolated random geometric graph $G_n(\lambda, p)$ has vertex set given by a Poisson Point Process in the square $[0,\sqrt{n}]^2$, and every pair of vertices at distance at most 1 independently forms an edge with probability $p$. For a fixed $p$, Penrose proved that there is a critical intensity $\lambda_c = \lambda_c(p)$ for the existence of a giant component in $G_n(\lambda, p)$. Our main result shows that for $\lambda > \lambda_c$, the size of the second-largest component is a.a.s.\ of order $(\log n)^2$. Moreover, we prove that the size of the largest component rescaled by $n$ converges almost surely to a constant, thereby strengthening results of Penrose.

We complement our study by showing a certain duality result between percolation thresholds associated to the Poisson intensity and the bond percolation of  $G(\lambda, p)$ (which is the infinite volume version of $G_n(\lambda,p)$). Moreover, we prove that for a large class of graphs converging in a suitable sense to $G(\lambda, 1)$, the corresponding critical percolation thresholds converge as well to the ones of $G(\lambda,1)$.
\end{abstract}
\noindent
Keywords: random geometric graph, second-largest component, giant component, continuum percolation, bond percolation, Schramm's locality conjecture, \\\\
\noindent
MSC Class: 05C80, 60C05, 60D05, 60K35

\section{Introduction}

The theory of percolation was introduced by Broadbent and Hammersley~\cite{BH57} more than 60 years ago. In the most classical setting, one is given a subgraph of $\mathbb Z^d$ for some $d\ge 2$ where every edge appears with probability $p\in [0,1]$, independently of all other edges. While the model has a very simple definition, it undergoes a phase transition for the existence of an infinite component which is not completely understood to this day. 

Several years after Broadbent and Hammersley, Gilbert~\cite{Gil61} proposed a new mathematical model of wireless networks, which gave rise to the field of continuum percolation. His model, known as the \emph{Random Geometric Graph}, is defined as follows: given $\lambda, R > 0$, the vertices of the graph are given by a Poisson Point Process with intensity $\lambda$ in $\mathbb R^2$, and whose edges are given by the pairs of points at distance at most $R$. In fact, as mentioned in his paper, one of the two parameters $\lambda$ and $R$ may be put to 1 by a suitable homothety of the plane. Later, Meester and Roy~\cite{MR96} generalized Gilbert's model by connecting randomly and independently pairs of points in the Poisson process with a probability depending on their relative positions, thus introducing the random connection model. In this paper, we aim at studying a 
particular example of the random connection model    
obtained by performing standard Bernoulli bond percolation on top of Gilbert's model, often called \emph{soft random geometric graph} or \emph{percolated random geometric graph}.

\subsection{Formal setup and our results}

For $\lambda > 0$, denote by $\mathrm{Po}(\lambda)$ a Poisson Point Process in $\mathbb{R}^2$ of intensity $\lambda$. Then, we define $G(\lambda,1)$ as the random geometric graph with vertex set $\mathrm{Po}(\lambda)$ and edge set the set of pairs of vertices at Euclidean distance at most one. Given $p\in [0,1]$, we further define $G=G(\lambda,p)$ as the graph obtained from $G(\lambda,1)$ after Bernoulli bond percolation with probability $p$.

We will often consider $\mathrm{Po}(\lambda)$ conditioned on containing the origin, that is, we add artificially the origin to the point process (this construction is known in a more general setup under the name \emph{Palm theory}, see for instance \cite{Pen03}). Then, we shall denote by $\theta(\lambda,p)$ the probability that the connected component of the origin in $G$ is infinite. By classical considerations from ergodic theory (see e.g.~\cite{MR96}) one may deduce the existence of a deterministic threshold $\lambda_0\in [0, \infty]$ (in fact, a standard coupling argument with site percolation on $\mathbb Z^2$ (see \cite{MR96, Pen03}) shows that $\lambda_0\in (0,\infty)$) such that:
\begin{itemize}
    \item for all $\lambda > \lambda_0$, the graph $G(\lambda, 1)$ contains an infinite connected component almost surely, and in particular $\theta(\lambda, 1) > 0$;
    \item for all $\lambda < \lambda_0$, the graph $G(\lambda, 1)$ contains no infinite component almost surely, and in particular $\theta(\lambda, 1) = 0$. 
\end{itemize}
For every $p\in (0,1]$ we define then
\begin{equation}\label{eq:lambdacp}
\lambda_c(p) = \inf\{\lambda\in \mathbb R : \theta(\lambda, p) > 0\}.
\end{equation}

Moreover, for $n\ge 1$, we consider the restriction $G_n = G_n(\lambda, p)$ of $G$ to the square $[0,\sqrt n]^2$. Also, we denote by $L_1(G_n)$ the number of vertices in the largest connected component of $G_n$.

We say that a sequence of events $(\mathcal E_n)_{n\ge 0}$ holds \emph{asymptotically almost surely} (which we abbreviate by \emph{a.a.s.}) if $\mathbb P(\mathcal E_n)\to 1$ as $n\to \infty$. A sequence of random variables $(X_n)_{n\ge 0}$ 
is said to be a.a.s.\ of order $\Theta_{\lambda,p}(f_n)$  
if there exist positive constants $c$ and $C$, which might depend on $\lambda$ and $p$, such that $\mathbb P(cf_n\le X_n\le Cf_n)\to 1$ as $n\to \infty$.

Our main contribution is a sharp result on the size of the second largest component of $G(\lambda,p)$ restricted to a square of area $n$. It extends a recent result by Penrose~\cite{PenroseDraft} who proved that the size of the largest connected component of $G_n$ rescaled by $n$ converges in probability to $\lambda \theta(\lambda,p)$, while the size of the second-largest component divided by $n$ converges in probability to 0. More precisely, we have the following main theorem: 
\begin{theorem}\label{thm 1}
Fix $\lambda > 0$ and $p\in (0,1]$, such that $\lambda>\lambda_c(p)$. Then $(n^{-1} L_1(G_n))_{n\ge 1}$ converges almost surely to $\lambda\theta(\lambda, p)$. Moreover, the size of the second-largest component in $G_n$ is a.a.s.\ of order $\Theta_{\lambda,p}\big((\log n)^2\big)$.
\end{theorem}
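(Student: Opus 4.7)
The plan is to split Theorem~\ref{thm 1} into three independent pieces: the upper bound on the second-largest component, the matching lower bound, and the almost sure convergence of $L_1(G_n)/n$. The technical core of the argument is a \emph{boundary-rate} exponential decay for finite clusters in the infinite graph $G(\lambda,p)$: for $\lambda > \lambda_c(p)$ and every $k \geq 1$,
\[
\mathbb P\bigl(|C(o)| \geq k,\ |C(o)| < \infty\bigr) \leq \exp(-c\sqrt{k}),
\]
where $C(o)$ is the component of the Palm origin and $c = c(\lambda,p) > 0$. I would prove this by a renormalization/block argument adapting the classical $\mathbb Z^2$ Peierls contour proof to the continuum: tile $\mathbb R^2$ into squares of a large fixed side $R_0$, declare a square \emph{good} if its Poisson configuration is typical and prescribed crossings of neighbouring squares in $G(\lambda,p)$ are realized, and choose $R_0$ large enough that good squares stochastically dominate a supercritical site percolation on $\mathbb Z^2$. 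A finite cluster of size $k$ is then enclosed by a dual circuit of $\Omega(\sqrt k)$ bad squares, whose probability decays exponentially in $\sqrt k$. Carrying this transfer through in the continuum setting is the main technical obstacle; once it is in hand, the rest of the proof is comparatively routine.

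For the upper bound, I would apply the decay estimate with $k = C(\log n)^2$ for a large constant $C$ and take a union bound over the $O(n)$ vertices of $G_n$, giving total probability $n \cdot \exp(-c\sqrt{C}\log n) = n^{1-c\sqrt{C}} \to 0$. A $G_n$-component may a priori differ from the $G$-cluster containing it because of edges cut off by $\partial[0,\sqrt n]^2$, so I would additionally show that every non-giant $G_n$-component of size $\geq k$ either sits in a finite $G$-cluster of size $\geq k$ or induces a boundary contour of $\Omega(\sqrt k)$ bad blocks along $\partial[0,\sqrt n]^2$, both controlled by the same exponential rate.

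For the lower bound, I would exhibit an isolated large cluster explicitly. Partition $[0,\sqrt n]^2$ into $\Omega(n/(\log n)^2)$ disjoint sub-squares of side $K\log n$, with $K$ a small positive constant. In each sub-square $Q$, consider the joint event that (a) the inner width-$1$ annulus along $\partial Q$ contains no Poisson point, which has probability $\exp(-4\lambda K\log n + o(1)) = n^{-4\lambda K + o(1)}$; and (b) the remaining inner square of side $K\log n - 2$ contains a $G(\lambda,p)$-component of size $\Omega((\log n)^2)$, which has probability bounded away from zero by ergodicity and the existence of a giant at intensity $\lambda > \lambda_c(p)$. Choosing $K < 1/(4\lambda)$ makes the per-cell success probability at least $n^{-1+\delta}$ for some $\delta > 0$, so the expected number of successful sub-squares diverges polynomially; disjointness of the sub-squares gives independence of the events, and $(1-p)^N \leq e^{-pN}$ produces at least one success a.a.s. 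An empty width-$1$ annulus disconnects its inner square from the rest of the plane in $G(\lambda,p)$, so the resulting component cannot belong to the giant.

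Finally, for the almost sure convergence of $L_1(G_n)/n$, I would combine Penrose's convergence in probability with concentration and Borel--Cantelli. The upper bound just established implies that adding or removing a single Poisson point modifies $L_1(G_n)$ by at most $O((\log n)^2)$ off a rare event, so a Poincar\'e or modified bounded-differences inequality for Poisson functionals yields $\mathbb P(|L_1(G_n) - \mathbb E L_1(G_n)| > \varepsilon n) \leq \exp(-c_\varepsilon n/(\log n)^4)$, which is summable in $n$. Since $L_1(G_n)/n$ is dominated by $|\mathrm{Po}(\lambda)\cap[0,\sqrt n]^2|/n$, it is uniformly integrable, so Penrose's convergence in probability gives $\mathbb E L_1(G_n)/n \to \lambda\theta(\lambda,p)$; combining this with the tail bound and Borel--Cantelli yields almost sure convergence along $\mathbb N$.
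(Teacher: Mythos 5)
Your lower bound and your reduction of the a.a.s.\ upper bound to a stretched-exponential decay $\mathbb P(|C(o)|\ge k,\ |C(o)|<\infty)\le \exp(-c\sqrt k)$ are both sound in outline: the decay estimate is true, and is in fact recorded in the paper as a byproduct of its argument (Remark~\ref{rem stretched exp decay}). The genuine gap is in your proposed proof of that estimate. The Peierls step --- ``a finite cluster of size $k$ is enclosed by a dual circuit of $\Omega(\sqrt k)$ bad squares'' --- is false for the percolated model. In a good block the prescribed crossings exist and hook up into the giant, but a good block does not (and cannot, for any fixed $R_0$) guarantee that every vertex, or even every long path, inside it joins that crossing structure: after Bernoulli bond percolation, two edges that intersect geometrically need not lie in the same component, so the cluster of the origin can be finite yet have large diameter while threading through a sea of good blocks, repeatedly crossing the giant's circuits without ever being graph-connected to them. (For $p=1$ the triangle inequality forces two crossing edges to have a pair of endpoints within distance $1$, which is why the contour argument works for the unpercolated Gilbert graph; that is exactly the property destroyed by bond percolation, and it is the central difficulty of this model.) Consequently the circuit of bad blocks you want to draw around a large finite cluster need not exist, and the exponential cost you charge for it is never incurred.

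Closing this gap is the actual content of Section~\ref{sec.second} of the paper: one first shows that every point is surrounded by $W=\Theta(\log n)$ disjoint cycles of the giant inside a window of side $\Theta(\log n)$ (via Corollary~\ref{cor.Bercross}), and then runs a carefully ordered exploration combined with \emph{local} sprinkling of the extra intensity $\lambda-\lambda'$ to show that a path from $x$ crossing all $W/2$ cycles fails to attach to every one of them with probability at most $(1-\varepsilon)^{W/2}=o(n^{-1})$; the locality of the sprinkling is essential so that the added points cannot themselves create new components of large diameter. Separately, your concentration step for the almost sure convergence needs more care --- deleting a Poisson point can in principle split the giant, so ``bounded differences off a rare event'' requires a typical-bounded-differences inequality rather than a plain Poincar\'e bound, whereas the paper avoids this by tessellating into boxes of area $n^{\delta}$ and using independence plus Hoeffding --- but that part is repairable; the contour argument is the step that would fail.
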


 The proof of the almost sure convergence for the largest component is based on estimates for the probability of crossing large rectangles with a fixed length-to-width ratio. The main difficulty here is that two edges that intersect (geometrically) in an interior point may still be in different connected components because of the bond percolation. To solve this problem, we use the classical technique of sprinkling, which consists of revealing the  Poisson Point Process in two steps with the idea to locally connect two crossing edges present after the first step with positive probability.

The result on the size of the second-largest component is the most delicate part. Proving the lower bound is the easier part: it consists simply in observing that in the square $[0,\sqrt n]^2$, there is a.a.s.\ a subsquare of side length of order $\log n$ with no point at distance 1 from its boundary and that contains at least $c(\log n)^2$ points for some $c > 0$. The proof of the upper bound is more elaborate. The main idea is the following: we begin by proving that a.a.s.\ every point $x$ in the square $[0,\sqrt n]^2$ is surrounded by `many' cycles in the giant component of $G_n$ close to $x$. Hence, if the connected component of $x\in G_n$ has `large' Euclidean diameter, it necessarily intersects geometrically each of the above cycles. In this case, the argument of sprinkling that was used for the largest component does not work directly: although adding new points would help to connect $x$ to the giant component, it could also create new components with large diameter. We overcome this difficulty by using local sprinkling only rather than global sprinkling. This allows us to prove that components with large diameter (which thus cross many cycles) must be part of the giant component, and at the same time no new components with large diameter are created.

\begin{remark}[A generalization of the model.] In fact, a careful inspection of our proof shows that Theorem~\ref{thm 1} holds for the more general random connection model mentioned above. More precisely, fix a function $g:(0,1]\to (0,1]$. Then, the random graph $G(\lambda, g)$ may be defined as a subgraph of $G(\lambda, 1)$ in which every edge between two vertices $x$ and $y$ is retained with probability $g(\|x-y\|_2)$,
independently of all other edges. In particular, the graph $G(\lambda, p)$ is a special case of this generalized setting with $g$ constant equal to $p$. Then, by defining $\lambda_c(g)$ and $\theta(\lambda, g)$ as before, Theorem~\ref{thm 1} holds for this more general setup with almost the same proof. However, in order to keep the notation of the paper simple, we decided to present a proof in the simpler percolated random geometric graph model only.
\end{remark}

\begin{remark}[Stretched exponential decay in the supercritical regime.] The proof of Theorem~\ref{thm 1} easily implies that in the infinite volume setting, the origin is in a component of size at least $n$ without being part of the infinite component with probability at least $\exp(-c \sqrt{n})$ for some $c = c(\lambda, p) > 0$. For more details, see Remark~\ref{rem stretched exp decay}.
\end{remark}

\begin{remark}
We note that the question about the size of the second largest component of the graph $G_n$ in dimension more than two remains open. 
Unfortunately, our techniques do not shed light on this more general setting as we substantially use the properties of the planar embedding of $G_n$: Indeed, planarity allows to ``glue'' paths that intersect each other via sprinkling (see the proofs of Theorem~\ref{theo:giant} and Proposition~\ref{lem box crossing}), and to use duality in an auxiliary percolation on the $\mathbb Z^2$ lattice in the proof of Corollary~\ref{cor.Bercross}.
\end{remark}

Besides the study of the largest component sizes, a second motivation for studying $G(\lambda,p)$ is the following `duality' question. Recalling the definition of $\lambda_c(p)$ in~\eqref{eq:lambdacp}, for every $\lambda\in [\lambda_0, \infty)$, we define in a similar way 
$$p_c(\lambda) = \inf\{p\in [0,1] : \theta(\lambda, p) > 0\}.$$
It is natural to ask whether the two functions $\lambda_c(p)$ and $p_c(\lambda)$ are strictly monotone, continuous, and inverse of each other. We give an answer to this question in the following proposition: 
 
\begin{proposition}\label{thm critical}
For any $\lambda\in (\lambda_0, \infty)$ and $p\in (0, 1)$, one has $\lambda_c(p_c(\lambda)) = \lambda$ and $p_c(\lambda_c(p)) = p$, respectively. In particular, $\lambda_c$ and $p_c$ are inverse bijections, and hence continuous and strictly decreasing.
\end{proposition}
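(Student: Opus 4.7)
The key ingredient is a sprinkling lemma asserting that a small increase in $\lambda$ can be traded for a small decrease in $p$ while preserving supercriticality: there is a universal constant $c > 0$ such that for every $\lambda > 0$, $p \in (0, 1]$, $\epsilon > 0$, and $\delta \in \bigl(0, c\, \epsilon\, p^2 (1-p)\bigr)$, one has
\[
\theta(\lambda, p) > 0 \ \Longrightarrow\ \theta(\lambda + \epsilon, p - \delta) > 0.
\]
To prove it, decompose $\mathrm{Po}(\lambda + \epsilon) = \mathrm{Po}(\lambda) \sqcup \mathrm{Po}(\epsilon)$ as two independent Poisson processes and perform bond percolation with probability $p - \delta$ on the resulting random geometric graph. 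For each pair $u, v \in \mathrm{Po}(\lambda)$ with $|u - v| \leq 1$, consider the \emph{effective edge event} $C_{uv}$ that either the direct edge $uv$ is open, or there is some $w \in \mathrm{Po}(\epsilon)$ in the lune $L_{uv} := \{x : |x-u|, |x-v| \leq 1\}$ with both edges $uw$ and $wv$ open. Since the area of $L_{uv}$ is bounded below by a universal constant $A_0 > 0$ when $|u - v| \leq 1$, a direct Poisson computation yields
\[
\mathbb{P}(C_{uv}) \geq 1 - (1 - p + \delta)\, \exp\bigl(-A_0 \epsilon (p - \delta)^2\bigr),
\]
which exceeds $p$ as soon as $\delta \leq c \epsilon p^2 (1 - p)$ for $c$ small enough. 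Using Holley's criterion combined with the FKG inequality, and verifying that the conditional probability $\mathbb{P}(C_{uv} \mid \text{everything else}) \geq p$ a.s.\ even when lunes $L_{uv}$ overlap, the family $\{C_{uv}\}$ stochastically dominates Bernoulli$(p)$ bond percolation on $G(\lambda, 1)$, i.e., $G(\lambda, p)$. Since every effective edge corresponds to a path of length at most $2$ in $G(\lambda + \epsilon, p - \delta)$, the infinite effective cluster (which exists because $\theta(\lambda, p) > 0$) is contained in an infinite component of $G(\lambda + \epsilon, p - \delta)$, which yields $\theta(\lambda + \epsilon, p - \delta) > 0$.

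\textbf{Deducing the identities.} Fix $\lambda > \lambda_0$ and set $p^* := p_c(\lambda)$; standard bounds (Peierls for the upper end, $\lambda > \lambda_0$ for the lower end) give $p^* \in (0, 1)$. For $\lambda_c(p^*) \geq \lambda$: if some $\lambda'' \in (\lambda_c(p^*), \lambda)$ satisfied $\theta(\lambda'', p^*) > 0$, then the sprinkling lemma applied with $\epsilon = \lambda - \lambda''$ would produce $\theta(\lambda, p^* - \delta) > 0$ for some admissible $\delta > 0$, contradicting the definition of $p^* = p_c(\lambda)$. For $\lambda_c(p^*) \leq \lambda$: fix $\eta > 0$ and pick $\tau > 0$ small enough that $\tau < c\, \eta\, (p^* + \tau)^2 (1 - p^* - \tau)$ (possible since the right-hand side is positive at $\tau = 0$). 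Applying the sprinkling lemma at $(\lambda, p^* + \tau)$ with $(\epsilon, \delta) = (\eta, \tau)$, and using $\theta(\lambda, p^* + \tau) > 0$ (by definition of $p^*$), yields $\theta(\lambda + \eta, p^*) > 0$, so $\lambda_c(p^*) \leq \lambda + \eta$; letting $\eta \downarrow 0$ concludes. The identity $p_c(\lambda_c(p)) = p$ follows from an analogous argument. The final assertion of the proposition is then automatic: two monotone maps between intervals that are inverses of each other are necessarily continuous and strictly decreasing.

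\textbf{Main obstacle.} The delicate step is the stochastic domination in the sprinkling lemma: while each $\mathbb{P}(C_{uv}) \geq p$ is obtained by a one-line Poisson computation, the $C_{uv}$ for distinct pairs are \emph{not} independent because overlapping lunes $L_{uv}$ share extra points of $\mathrm{Po}(\epsilon)$, introducing positive correlation. Two standard workarounds are (i) a deterministic decomposition of $\mathrm{Po}(\epsilon)$ into independent slices indexed by small disjoint cells of the plane, where each slice is used as a bridge source only for edges with midpoint in the associated cell; or (ii) a direct verification of Holley's criterion by bounding $\mathbb{P}(C_{uv} \mid \text{everything else}) \geq p$ almost surely. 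Either option is routine in continuum percolation but requires some careful bookkeeping.
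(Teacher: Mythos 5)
Your route is genuinely different from the paper's (which assumes the identity fails, extracts $\lambda_1<\lambda_2$ with $p_c(\lambda_1)=p_c(\lambda_2)$, and contradicts this via Hammersley's bond-vs-site threshold inequality for the first identity and via the Franceschetti--Penrose--Rosoman strict monotonicity theorem for the second), but it has two genuine gaps.

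First, the stochastic domination in your sprinkling lemma is not ``routine bookkeeping''; it is the crux. The unconditional bound $\mathbb P(C_{uv})\ge p$ is fine, but the Holley-type criterion requires $\mathbb P(C_{uv}=1\mid \text{rest})\ge p$ for \emph{every} conditioning, and the only bound that survives adversarial conditioning is $p-\delta$ (from the direct edge): conditioning on $C_{uv'}=0$ for other neighbours $v'$ of $u$ is negative information about exactly the points of $\mathrm{Po}(\epsilon)$ and the edges emanating from $u$ that the bridge for $uv$ would use, since overlapping lunes share both. Your alternative (i) does not resolve this either: after slicing, effective edges whose midpoints fall in the same cell remain dependent, so you obtain a $1$-dependent bond process on $G(\lambda,1)$, a graph of \emph{unbounded} degree, where Liggett--Schramm--Stacey-type domination of a product measure is not available. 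This difficulty is essentially the content of the FPR argument, not a corollary of it.

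Second, even granting the lemma, only $\lambda_c(p_c(\lambda))=\lambda$ follows; the claim that $p_c(\lambda_c(p))=p$ ``follows from an analogous argument'' is where the paper locates the real difficulty, and it does not. Both directions of the second identity require the reverse trade-off: supercriticality at $(\lambda,p'')$ with $p''<p$ must yield supercriticality at $(\lambda-\epsilon',p)$, i.e.\ strict monotonicity of $\lambda_c$ in $p$. Your lemma adds points and removes edges; here you must \emph{delete} points and compensate with extra edges, and a deleted point's two path-neighbours can be at mutual distance up to $2$, so no single sprinkled edge can reconnect them --- the local bypass that powers your lemma is unavailable. Nor can you derive $p_c\circ\lambda_c=\mathrm{id}$ formally from $\lambda_c\circ p_c=\mathrm{id}$ without injectivity of $\lambda_c$, which is again precisely the FPR theorem. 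The paper handles the first identity with Hammersley's inequality (bond threshold $\le$ site threshold for $G(\lambda_2,p)$) and simply cites FPR for the second; you need either that citation or a genuinely new argument for the reverse direction.
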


The most difficult part of this proposition is the equality $p_c(\lambda_c(p))=p$, which essentially follows from the results proved by Franceschetti, Penrose and Rosoman in~\cite{FPR11}. Thus, our contribution here is to prove the other equality, which is the easier part. For this, we rely on a classical bound by Hammersley~\cite{Ham61} stating that for any infinite locally finite graph, the bond percolation threshold is bounded from above by the site percolation threshold, which implies that $p_c$ is a strictly decreasing function of $\lambda$.

\vspace{0.2cm}

Finally, a third motivation for analyzing the graph $G(\lambda,p)$ is related to Schramm's locality conjecture, see~\cite{BNP11} and also~\cite{CMT23, DCT17,H20, MT17} for some recent progress. 
Suppose that we are given an integer $k\ge 1$ and a real $\lambda>0$. Then, remove all vertices of the random geometric graph $G(\lambda,1)$ which have degree larger than some constant $k$ together with all the edges emanating from them. This defines a subgraph of $G(\lambda,1)$ in which all vertices have degree bounded by $k$. Moreover, as $k\to \infty$, these random subgraphs (say rooted at the origin) converge locally in the Benjamini--Schramm sense to $G(\lambda,1)$ (see e.g.~\cite{AL07, BS01} for more on this notion of local convergence). Thus, we were initially aiming to understand whether the bond percolation thresholds on these subgraphs converge to $p_c(\lambda)$, the bond percolation threshold of $G(\lambda,1)$. Our first main result confirms a more general version of this statement. We remark that, while Schramm's locality conjecture was stated in terms of vertex-transitive graphs, we assume translation invariance of the distribution of our graphs. First, a random set embedded in $\mathbb R^2$ is said to be {\it locally finite} if for any bounded domain $\Omega$, the restriction of the set to $\Omega$ is almost surely finite.

\begin{definition}\label{def.graph} Let $K\ge 0$ be given. We say that a random graph 
with a locally finite vertex set embedded in $\mathbb R^2$ is a \emph{$K$-dependent graph of geometric type} if the following two conditions are satisfied: 
\begin{itemize}
    \item The law of the graph is invariant by any translation of $\mathbb R^2$. 
    \item For any two domains $\Omega_1$ and $\Omega_2$ at Euclidean distance at least $K$, the restrictions of the graph to $\Omega_1$ and $\Omega_2$ are independent. 
\end{itemize}
We also say that a graph is of geometric type if there exists $K\ge 0$ such that it is a $K$-dependent graph of geometric type. 
\end{definition}

In particular, each of the graphs $G(\lambda,p)$, or more generally any random connection model with a connection function having bounded support, is of geometric type. The example mentioned above where vertices with degree larger than a constant $k$ are removed is another example of a $2$-dependent graph of geometric type. In this setting, a convenient notion of local convergence is the following.

\begin{definition}\label{def.convergence}
A sequence $(\mathcal G_k)_{k\ge 1}$ of graphs of geometric type in the sense of Definition~\ref{def.graph} is said to {\it converge locally} to another graph $\mathcal G$ of geometric type if for any bounded domain $\Omega\subseteq\mathbb R^2$, the distribution of the restriction of $\mathcal G_k$ to $\Omega$ converges to the distribution of the restriction of $\mathcal G$
to $\Omega$ as a sequence of finite graphs. In other words, for any finite graph $H$, 
$$\lim_{k\to \infty} \ \mathbb P\big((\mathcal G_k)_{| \Omega} = H\big) = \mathbb P\big(\mathcal G_{| \Omega} = H\big).$$
\end{definition}

Examples of graph sequences of geometric type which converge to $G(\lambda,p)$ include:
\begin{itemize}
\item  $G(\lambda_k,p_k)$ for any sequence $(\lambda_k)_{k\ge 1}$ and $(p_k)_{k\ge 1}$ converging respectively to $\lambda$ and $p$;
\item  $\mathcal G_k$, obtained after removing all vertices in $G(\lambda, p)$ with degree larger than $k$;
\item  $\widehat{\mathcal G}_{\eps}$, obtained after removing all vertices in $G(\lambda, p)$ with another vertex at distance at most $\eps$ (in $\mathbb R^d$) from them.
\end{itemize}

A natural guess is that both the bond and the site percolation thresholds are continuous for this notion of convergence in the following sense. Fix $K\ge 0$ and any sequence $(\mathcal G_k)_{k\ge 1}$ of $K$-dependent graphs of geometric type converging locally to some other graph $\mathcal G$. Denote by $p_c(\mathcal G_k)$ and $s_c(\mathcal G_k)$ respectively the bond and the site percolation thresholds of $\mathcal G_k$ (and similarly for $p_c(\mathcal G)$ and $s_c(\mathcal G)$). Then, one should always have
$$\lim_{k\to \infty} p_c(\mathcal G_k) = p_c(\mathcal G) \quad \text{and}\quad \lim_{k\to\infty} s_c(\mathcal G_k) = s_c(\mathcal G).$$ 
We are able to prove one direction of this claim for bond percolation when the limiting graph is $G(\lambda,1)$. We do not address the question of site percolation here, which is a harder problem, see below for some comments on it.

\begin{proposition}\label{thm cor thm 1}
Fix $\lambda > \lambda_0$, $K\ge 0$, and a sequence of $K$-dependent graphs $(\mathcal G_k)_{k\ge 1}$ of geometric type which converges locally to $G(\lambda,1)$ in the sense of Definition~\ref{def.convergence}. Then, with the previous notation, one has  
$$\limsup_{k\to \infty} p_c(\mathcal G_k) \le  p_c(\lambda).$$
If in addition $\mathcal G_k$ is a subgraph of $G(\lambda,1)$ for any $k\ge 1$, then 
$$\lim_{k\to \infty} p_c(\mathcal G_k)  =   p_c(\lambda).$$
\end{proposition}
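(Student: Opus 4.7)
The plan is to prove the two inequalities separately, with the first (the $\limsup$ bound) being the substantive part. Fix any $p > p_c(\lambda)$; I will show that for all $k$ sufficiently large, bond percolation on $\mathcal G_k$ at parameter $p$ produces an infinite cluster almost surely, hence $p_c(\mathcal G_k) \le p$. The approach is a block renormalization of Grimmett--Marstrand type. Attaching independent Bernoulli$(p)$ marks to the edges of $\mathcal G_k$ and of $G(\lambda, 1)$, the local convergence hypothesis transfers automatically to local convergence of the bond-percolated graphs to $G(\lambda, p)$, since the event that a given edge is retained depends only on local information.

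For the renormalization, I would tile $\mathbb R^2$ by axis-aligned squares $S_z = zN + [0, N)^2$ for $z \in \mathbb Z^2$ with a large parameter $N$ to be tuned, and let $\widetilde S_z$ denote the concentric square of side $3N$. I then declare $S_z$ to be \emph{good} in the bond-percolated $\mathcal G_k$ if $\widetilde S_z$ contains a single connected cluster crossing it both horizontally and vertically. This definition has two essential features: first, if two neighboring squares $S_z$ and $S_{z'}$ are both good, their horizontal and vertical crossings must intersect and therefore lie in the same cluster of the bond-percolated graph in $\widetilde S_z \cup \widetilde S_{z'}$; second, the goodness of $S_z$ is measurable with respect to the bond-percolated graph inside the bounded domain $\widetilde S_z$.

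Since $\lambda > \lambda_c(p)$ is equivalent to $p > p_c(\lambda)$ by Proposition~\ref{thm critical}, the box-crossing estimate used in the proof of Theorem~\ref{thm 1} (Proposition~\ref{lem box crossing}) ensures that the probability a square is good in $G(\lambda, p)$ tends to $1$ as $N \to \infty$. Combined with local convergence, for every $\delta > 0$ one can first choose $N$ so that this probability exceeds $1 - \delta/2$ in $G(\lambda, p)$, and then $k_0$ such that for every $k \ge k_0$ it exceeds $1 - \delta$ in the bond-percolated $\mathcal G_k$. Because $\mathcal G_k$ is $K$-dependent of geometric type and the good event is measurable with respect to the graph in $\widetilde S_z$, for $N > K$ the indicator field $(\mathds{1}\{S_z \text{ is good}\})_{z \in \mathbb Z^2}$ is $3$-dependent. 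Choosing $\delta$ small enough and invoking the Liggett--Schonmann--Stacey comparison theorem yields stochastic domination of a supercritical Bernoulli site percolation on $\mathbb Z^2$, which produces an infinite $\mathbb Z^2$-connected set of good squares and hence an infinite cluster in the bond-percolated $\mathcal G_k$.

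For the converse inequality when $\mathcal G_k \subseteq G(\lambda, 1)$, monotonicity of bond percolation suffices: for any $p < p_c(\lambda)$, $G(\lambda, p)$ has no infinite cluster almost surely, and therefore neither does its subgraph obtained by bond-percolating $\mathcal G_k$ at parameter $p$, so $p_c(\mathcal G_k) \ge p_c(\lambda)$ for every $k$. Combined with the first inequality, this yields the claimed equality. The hard part of the whole argument is the first one, and within it the key leverage comes from the sprinkling-based box-crossing estimate for $G(\lambda, p)$ in the supercritical regime; once that is in hand, the renormalization and the appeal to Liggett--Schonmann--Stacey are routine.
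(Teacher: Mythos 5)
Your renormalization scheme has a genuine gap at the gluing step. You declare a block good if a single cluster crosses $\widetilde S_z$ both horizontally and vertically, and then assert that for two adjacent good blocks ``their horizontal and vertical crossings must intersect and therefore lie in the same cluster.'' In the percolated random geometric graph this implication is false: two paths whose edges intersect geometrically need not share a vertex, and the short edge that would join their endpoints may be closed after bond percolation. This is exactly the difficulty the paper flags in the introduction and resolves inside Proposition~\ref{lem box crossing} and Remark~\ref{rem orthogonal} by \emph{sprinkling} extra Poisson points to weld crossing paths together. Crucially, sprinkling is unavailable here: the graphs $\mathcal G_k$ are abstract $K$-dependent graphs of geometric type about which we only know the law of their restrictions to bounded domains, so one cannot add vertices to them, and (for the first inequality) they need not even be subgraphs of any Gilbert graph. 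Your good event therefore does not produce an infinite cluster from an infinite $\mathbb Z^2$-connected set of good blocks, and the rest of the argument (LSS domination, etc.) does not rescue this.

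The paper's proof avoids the issue by choosing a block event whose gluing is purely combinatorial rather than geometric: for each $\sqrt m$-square $Q$, the event $\mathcal A_Q$ requires the second-largest component of $Q$ to have size at most $\tfrac{\lambda\theta(\lambda,p)}{10}m$ while each quarter of $Q$ contains a component of size at least $\tfrac{\lambda\theta(\lambda,p)}{8}m$; by pigeonhole the largest components of the quarters are forced into the largest component of $Q$, and intersecting such events for $Q$, $Q'$ and the straddling square $Q''$ forces the giants of adjacent blocks to coincide with no appeal to planar intersection of paths. The probabilistic input making $\mathbb P_{\lambda,p}(\mathcal A_{Q,Q'})$ close to $1$ is Theorem~\ref{thm 1} (or Penrose's law of large numbers for the giant together with the smallness of the second-largest component), and the event is local, so it transfers to $\mathcal G_k^b(p)$ by Lemma~\ref{lem.convergence}. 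Your two peripheral steps are fine and match the paper: bond percolation preserves local convergence (though this deserves the short computation of Lemma~\ref{lem.convergence} rather than a one-line assertion), and the reverse inequality in the subgraph case follows from monotonicity of $p_c$ under inclusion. To repair your argument you would need to replace the crossing-based good event by one (like the paper's, or a uniqueness-of-the-crossing-cluster event) for which adjacency of good blocks forces the clusters to share vertices outright.
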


In particular this proposition applies to the example of the graph $\mathcal G_k$ obtained after removing from $G(\lambda,1)$ vertices with degree larger than $k$.

Moreover, we remark that the proof works for bond percolation and not for site percolation. The reason for this hides in the fact that, in general, comparison between bond and site percolation fails for graphs of unbounded maximum degree. In particular, our proof technique of sprinkling in new vertices, which might potentially have high degrees, breaks down for site percolation. This lack of symmetry is also apparent from (the statement of) Theorem~\ref{thm 1} and Remark~\ref{remark.GS}.

\vspace{0.2cm}

\paragraph{Plan of the paper.} In Section~\ref{sec 2} we introduce notation and several preliminary results. Then, Sections~\ref{sec 3} and~\ref{sec 5} are dedicated to the proofs of Proposition~\ref{thm critical} and Proposition~\ref{thm cor thm 1}. Finally, in Section~\ref{sec 4} we present the proof of Theorem~\ref{thm 1}.

\section{Notation and preliminaries}\label{sec 2}

\subsection{Notation}

Given a graph $\mathcal G$, we 
denote by $|\mathcal G|$ its number of vertices, which we also call its \emph{size}, and by $L_1(\cG)$ the size of its largest connected component. Given an event $\cA$, we denote by $\overline{\cA}$ the complementary event.

\vspace{0.1cm}

We use classical asymptotic notation: for sequences $(a_n)_{n\in\mathbb N}$ and $(b_n)_{n\in\mathbb N}$, we say that $a_n=O(b_n)$, $a_n=\Theta(b_n)$, and $a_n=\Omega(b_n)$ when there exist constants $0<c_1<c_2<\infty$ and $n_0\in\mathbb N$ such that for all $n\geq n_0$, $a_n\leq c_2b_n$, $c_1b_n\leq a_n\leq c_2b_n$, and $a_n\geq c_1 b_n$, respectively. Moreover, we say that $a_n=o(b_n)$ when $\lim_{n\to\infty} a_n/b_n=0$. By default the limit variable is $n$, and the constants associated to $O(\cdot)$, 
$\Theta(\cdot)$ and $\Omega(\cdot)$ are independent from the parameters of the problem; if this is not the case, the parameters influencing the constants will be given as lower indices, 
for example $O_p(\cdot)$ or $\Theta_{\lambda, p}(\cdot)$.

\vspace{0.1cm}

For any $x=(x_1,x_2)\in \mathbb R^2$ and $r \ge 0$, denote $\Lambda_x(r) := [x_1-r,x_1+r]\times [x_2-r,x_2+r]$, and simply write $\Lambda(r)$ when $x=\bo := (0,0)$ (the origin). Furthermore, for all $n\in \mathbb N$, we define 
$\Lambda_n:=[0,\sqrt{n}]^2$ and given a domain $\Omega\subseteq \mathbb R^2$, we set $\partial \Omega$ to be the boundary of $\Omega$. We also write $\|x\|$ for the Euclidean norm of $x\in \mathbb R^2$.

\vspace{0.1cm}

Given two domains $\Omega_1, \Omega_2\subseteq \mathbb R^2$ and a graph $\mathcal G$ with vertex set included in $\mathbb R^2$, 
we denote by $\Omega_1\xleftrightarrow{\mathcal G\,} \Omega_2$, or simply by $\Omega_1\leftrightarrow \Omega_2$ when $\mathcal G$ is clear from the context, 
the event that there is a path in $\mathcal G$ starting from a vertex in $\Omega_1$ and ending at a vertex in $\Omega_2$. Given a domain $\Omega\subseteq \mathbb R^2$, we shall also write $\Omega_1\xleftrightarrow{\Omega\;} \Omega_2$, the same event with the restriction that the path is contained 
in $\Omega$. Furthermore, when $\Omega_1$ (or $\Omega_2$) is reduced to a singleton $\{x\}$, we simply write $x$. 

\vspace{0.1cm}

Now, we introduce some standard notation from percolation theory. Given a fixed graph $\mathcal G=(V,E)$ (which in our case shall mostly be $\mathbb Z^2$ or a subgraph of it), 
a \emph{configuration} $\omega$ is an element of $\{0,1\}^{E}$. As usual, we often identify $\omega$ with the 
subgraph of $\mathcal G$ with vertex set $V$ and edge set $\{e:\omega_e=1\}$. Edges from this set are 
called \emph{open}, and the other edges of $\cG$ are called \emph{closed}. For $q\in [0,1]$, Bernoulli bond percolation with parameter $q$ on $\mathcal G$ is the product probability measure $\mathbb P_q$ for which every edge is open with probability $q$, independently of the other edges (we omit the reference to the base graph $\mathcal G$ in this notation, as it should always be clear from the context to which graph it applies).    
Note that by definition, when the base graph $\mathcal G$ is the random geometric graph $G(\lambda,1)$, the subgraph obtained after Bernoulli bond percolation with parameter $p$ is $G(\lambda,p)$. 
We shall either denote its distribution by $\mathbb P_{\lambda,p}$, when we want to emphasize which parameters we consider, or simply by $\mathbb P$, when they are clear from the context. 

\vspace{0.1cm}

Given a fixed graph $\mathcal G= (V,E)$, a configuration $\omega$ is smaller than $\omega'$ if for every $e \in E$ one has $\omega_e \le \omega'_e$. An event $\cA\subseteq \{0,1\}^E$ is increasing if whenever $\omega \in A$ and 
$\omega \le \omega'$, then also $\omega' \in A$, and an event $\cA$ is decreasing if $\overline{\cA}$ is increasing. 
Then, when considering the random geometric graph  
$G= G(\lambda,p)$, we say that 
an event $\mathcal A$ is increasing if whenever $\omega$ and $\omega'$ are two instances of $G$ such that $\omega$ is a subgraph of $\omega'$ and $\omega$ is in $\mathcal A$, then $\omega'$ is also in $\mathcal A$.

\subsection{Preliminaries}

The following well-known inequality shows that two increasing events are positively correlated. It applies to a wide variety of random models, and both Bernoulli percolation and the percolated random geometric graph model are among them, see e.g.~\cite{DC18, PenroseDraft}. 

\begin{lemma}[Harris inequality / FKG inequality]\label{lem harris}
For any $q\in [0,1]$ and any increasing events $\cA$ and $\cB$, one has  
$$\mathbb P_q(\cA \cap \cB) \ge \mathbb P_q(\cA) \mathbb P_q(\cB).$$
Similarly, for any $\lambda>0$, $p\in [0,1]$ and any two increasing events $\cA$ and $\cB$, 
$$\mathbb P_{\lambda,p}(\cA \cap \cB) \ge \mathbb P_{\lambda,p}(\cA) \mathbb P_{\lambda,p}(\cB).$$
\end{lemma}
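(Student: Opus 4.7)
The plan is to handle the two inequalities separately, first establishing the classical Harris inequality for $\mathbb{P}_q$, and then reducing the Poisson case to it via discretization of space.

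For the first inequality, I would reduce to the case of finite $E$ by a monotone class argument: every increasing event in $\{0,1\}^E$ can be approximated from outside by cylindrical increasing events depending on finitely many edges, and both sides of the inequality are continuous under this approximation. For finite $E$, it suffices to prove the equivalent statement $\mathbb{E}_q[fg]\ge \mathbb{E}_q[f]\mathbb{E}_q[g]$ for every pair of bounded increasing functions $f,g:\{0,1\}^E\to\mathbb{R}$, which I would establish by induction on $|E|$. The base case $|E|=1$ amounts to
$$\mathbb{E}_q[fg]-\mathbb{E}_q[f]\mathbb{E}_q[g]=q(1-q)\bigl(f(1)-f(0)\bigr)\bigl(g(1)-g(0)\bigr)\ge 0,$$
where the nonnegativity of the right-hand side is just monotonicity of $f$ and $g$. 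For the inductive step, fix $e_0\in E$ and condition on $\omega_{e_0}$: the conditional law of $(\omega_e)_{e\ne e_0}$ is again a product Bernoulli measure, so the inductive hypothesis yields $\mathbb{E}_q[fg\mid \omega_{e_0}]\ge \mathbb{E}_q[f\mid \omega_{e_0}]\mathbb{E}_q[g\mid \omega_{e_0}]$. Since $\omega_{e_0}\mapsto \mathbb{E}_q[f\mid \omega_{e_0}]$ and the analogous map for $g$ are increasing functions of a single Bernoulli variable, the base case (applied to the conditional expectations) closes the induction.

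For the inequality under $\mathbb{P}_{\lambda,p}$, I would reduce to the Bernoulli setting by a standard discretization. First, by approximation it suffices to consider increasing events that depend only on the restriction of $G(\lambda,p)$ to a bounded domain $\Omega$; the general case then follows by letting $\Omega\nearrow \mathbb{R}^2$. Partition $\Omega$ into disjoint boxes $B_1,\dots,B_m$ of side length $\varepsilon$, and to each box $B_i$ attach an independent Bernoulli$(\lambda\varepsilon^2)$ variable $X_i$ together with an independent uniform position in $B_i$ to be used when $X_i=1$; on top of this, to every pair $(i,j)$ with $B_i$ and $B_j$ at distance at most $1$ attach an independent Bernoulli$(p)$ edge variable $Y_{ij}$. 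The resulting graph couples with $G(\lambda,p)_{|\Omega}$ up to an event of probability $O(|\Omega|\varepsilon^2)$ (corresponding to two or more Poisson points falling in a single box), which is negligible as $\varepsilon\to 0$. Crucially, any increasing event in $G(\lambda,p)$ corresponds, in the coupling, to an event on the product space of the $(X_i,Y_{ij})$ which is monotone in the coordinatewise order (adding a vertex or an edge can only make the event hold), so the first part of the lemma applies to the discretized model; sending $\varepsilon\to 0$ and then $\Omega\nearrow \mathbb{R}^2$ completes the argument.

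The main obstacle is the second step, specifically showing that the FKG inequality is stable under the various limits. One has to be careful that arbitrary increasing events in $G(\lambda,p)$ can be approximated by events depending on a bounded window (otherwise the reduction to bounded $\Omega$ fails), and that the coupling error between the Poisson process and its Bernoulli discretization is uniform enough to pass to the limit inside both sides of the inequality. Both points are routine but unavoidable: the first follows because $\sigma$-algebras generated by restrictions to growing windows exhaust the full $\sigma$-algebra; the second follows from the explicit $O(|\Omega|\varepsilon^2)$ bound on the exceptional event together with dominated convergence for the indicator functions of $\mathcal A$, $\mathcal B$ and $\mathcal A\cap\mathcal B$.
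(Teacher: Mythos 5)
First, a point of comparison: the paper does not prove Lemma~\ref{lem harris} at all. It is stated as a classical fact and attributed to the literature (the references given in the sentence preceding the statement), so any complete argument you give is necessarily "a different route" from the paper's. Your first half --- the one-edge covariance identity $q(1-q)(f(1)-f(0))(g(1)-g(0))\ge 0$, induction on $|E|$ by conditioning on a single coordinate, and the reduction of infinite $E$ to finite $E$ by a martingale/monotone-class approximation --- is the standard Harris argument and is correct (modulo phrasing: the natural finite-range approximants are the increasing \emph{functions} $\mathbb E_q[\mathds{1}_{\mathcal A}\mid \mathcal F_n]$ rather than increasing cylinder events, but this is cosmetic).

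The continuum half, however, has a genuine gap. In your coupling, the realization of $G(\lambda,p)_{|\Omega}$ is a function of the occupation variables $X_i$, the edge variables $Y_{ij}$, \emph{and} the auxiliary uniform positions $U_i$; an increasing event of the graph is therefore not ``an event on the product space of the $(X_i,Y_{ij})$'', and it is not monotone in any useful order on the position coordinates. (Single-site FKG actually fails for the poset $\{\emptyset\}\cup B_i$ in which $\emptyset$ lies below every location and distinct locations are incomparable: with $B_i$ containing two possible locations one easily builds increasing $f,g$ with strictly negative covariance.) What is true is that, \emph{conditionally} on the positions $U=(U_i)_i$, the events $\mathcal A$ and $\mathcal B$ are increasing in $(X_i,Y_{ij})$, so the discrete Harris inequality gives $\mathbb P(\mathcal A\cap\mathcal B\mid U)\ge \mathbb P(\mathcal A\mid U)\,\mathbb P(\mathcal B\mid U)$; but integrating out $U$ then requires the two conditional probabilities to be positively correlated as functions of $U$, which is exactly the step that is missing. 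The usual repairs are either to place the point deterministically (say at the centre of each occupied box), so that the discretized model is a genuine product of Bernoulli variables, and then to prove convergence of the discretized event probabilities to the continuum ones --- which in turn requires first restricting to increasing events that are almost-sure continuity sets for the discretization (e.g.\ local events determined by strict inequalities on interpoint distances) and then extending by a monotone-class argument --- or to bypass discretization entirely via one of the direct proofs of Harris--FKG for Poisson functionals (conditioning on the number of points, or the covariance identity for Poisson processes). Your closing paragraph correctly flags the limit-interchange issues but not this positional one, which is the step that fails as written.
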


A consequence of Lemma~\ref{lem harris} is the so-called \emph{square-root trick}: given $k$ increasing (or decreasing) events $(\cA_i)_{i=1}^k$, and any $q\in [0,1]$, 
\begin{equation}\label{eq square-root trick}
\max_{1\le i\le k}\mathbb P_q(\cA_i)\ge 1 - (1-\mathbb P_q(\cup_{i=1}^k \cA_i))^{1/k}, 
\end{equation}
and similarly with $\mathbb P_{\lambda,p}$ instead of $\mathbb P_q$. 

\vspace{0.2cm}

Now, we present a couple of results concerning Bernoulli bond percolation on $\mathbb Z^2$ which will be used several times in this work. First we need a result on dependent bond percolation. For any $k\in \mathbb N$, we say that a bond percolation on a graph is \emph{$k$-dependent} if the states of any two (families of) edges at graph distance larger than $k$ are independent.

\begin{theorem}[see Theorem 0.0 in \cite{LSS97}]\label{thm LSS}
For every $k\ge 1$ and $p\in [0,1)$, there is $q_0 = q_0(k, p) < 1$ such that every $k$--dependent bond percolation measure on $\mathbb Z^2$, satisfying that any edge is open with probability $q > q_0$, dominates Bernoulli bond percolation with parameter $p$. 
\end{theorem}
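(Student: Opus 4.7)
The statement is a stochastic domination result, so the natural approach is to construct an explicit coupling between a configuration $\omega$ drawn from the given $k$-dependent measure $\mu$ and an i.i.d.\ Bernoulli($p$) configuration $\omega'$, such that $\omega \ge \omega'$ pointwise. The standard route is sequential revelation: enumerate the edges of $\mathbb Z^2$ as $e_1, e_2, \ldots$ and observe that by a step-by-step application of Strassen's theorem, it suffices to show that for every $i$ and every past realization of positive $\mu$-probability,
\[
\mu\bigl(\omega_{e_i} = 1 \mid \omega_{e_1},\ldots,\omega_{e_{i-1}}\bigr) \ge p;
\]
given this, one couples $\omega_{e_i}$ with a fresh independent Bernoulli($p$) variable so that the former dominates the latter, and the result has the correct joint law.

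\paragraph{Reduction and main obstacle.} One can choose the enumeration so that every $e_i$ has at most $\Delta = \Delta(k)$ earlier edges within graph distance $k$, which is possible since the $k$-ball around any edge of $\mathbb Z^2$ has bounded size. By $k$-dependence, the conditioning in the display above reduces to these at most $\Delta$ variables. Writing $\mu(\omega_{e_i} = 1 \mid \omega_N = x) = 1 - \mu(\omega_{e_i} = 0, \omega_N = x)/\mu(\omega_N = x)$, the numerator is bounded by $\mu(\omega_{e_i} = 0) \le 1 - q$, while the denominator $\mu(\omega_N = x)$ can be extremely small for configurations $x$ containing many zeros; this adverse conditioning is the core technical difficulty, since the naive bound is useless precisely when $x$ is rare under $\mu$.

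\paragraph{Overcoming the obstacle.} My strategy is to simplify the dependence structure by renormalization. Tile $\mathbb Z^2$ by disjoint $\ell \times \ell$ blocks with $\ell > k$ and let $Z_B$ be the indicator that all edges of block $B$ are open. By the union bound $\mu(Z_B = 1) \ge 1 - c\ell^2(1-q)$, close to $1$ when $q$ is close to $1$, and because non-adjacent blocks in the block lattice lie at physical distance greater than $k$, the family $(Z_B)$ is $1$-dependent on the block lattice. This reduces the problem to the $1$-dependent case, which is more tractable because each block now has only a bounded number of close neighbors. For the $1$-dependent analysis I would split the block lattice by a checkerboard $2$-coloring, so that each color class is an i.i.d.\ family with marginals close to $1$, then reveal the two classes successively and control the conditional probability of the next block being good via a careful Bayes calculation on its at most four already-revealed neighbors of the other color; the key quantitative observation is that the total contribution of adverse conditionings is summable and vanishes as $q \to 1$. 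Finally, one unpacks the block-level domination back to edge-level Bernoulli($p$) domination by a further coupling step exploiting the bounded dependence, yielding any prescribed $p < 1$ provided $q_0 = q_0(k,p)$ is chosen close enough to $1$.

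\paragraph{Main obstacle.} The step I expect to be most delicate is the quantitative bookkeeping in the $1$-dependent analysis, namely ensuring uniformly over all positive-probability past realizations that the conditional probability of the next good-block indicator remains at least $1 - o_{q \to 1}(1)$, in spite of the accumulation of adverse conditionings along the revelation process. The renormalization by large blocks is what ultimately makes this accumulation controllable.
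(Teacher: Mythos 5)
The paper does not prove this statement: it is quoted as Theorem~0.0 of Liggett, Schonmann and Stacey and used as a black box, so there is no internal proof to compare against. Your sketch sets up the standard framework correctly (Strassen/sequential coupling, reduction of the $k$-dependent edge field to a $1$-dependent field of block indicators with marginals close to $1$, and a final per-block coupling to return to edge-level domination, which does work at the cost of an exponent $1-p=(1-p')^{1/(2\ell^2)}$), and you correctly identify the crux: the conditional probability of the next variable given an exact adverse past cannot be controlled by the naive Bayes bound. But the proposal does not overcome that crux, and overcoming it is the entire content of the theorem.

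Concretely, the step that fails is ``control the conditional probability of the next block being good via a careful Bayes calculation \dots the total contribution of adverse conditionings is summable and vanishes as $q\to1$.'' For a finitely dependent field, the conditional probability of a variable being $1$ given that a neighbour is $0$ need not be close to $1$; it can be exactly $0$ no matter how close the density is to $1$. For example, on $\mathbb Z$ take $(Y_j)$ i.i.d.\ Bernoulli$(1-\epsilon)$ and set $X_{2j-1}=X_{2j}=Y_j$: this is $1$-dependent with density $1-\epsilon$, yet $\mathbb P(X_{2j}=1\mid X_{2j-1}=0)=0$ (an analogous bond example on $\mathbb Z^2$ is obtained by opening or closing adjacent edges in pairs). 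The renormalization does not remove this obstruction: the block field $(Z_B)$ is again merely a finitely dependent field with marginals close to $1$, i.e.\ exactly the kind of object you started from, and asserting uniform conditional lower bounds for it is assuming the $1$-dependent case of the theorem rather than proving it. Nor can one retreat to an average-case statement: the sequential/Strassen coupling requires the conditional bound for \emph{every} positive-probability past, since a single bad past destroys the pointwise domination, so there is no ``summability'' to exploit without a genuinely new idea (this is what the delicate inductive argument of [LSS97] supplies). A secondary, fixable, error: a checkerboard $2$-colouring of the block lattice does not make a colour class independent, because diagonally adjacent blocks share a corner and hence lie within distance $k$ of each other; one needs four colour classes (or $1$-dependence with respect to the king graph), after which each later block is conditioned on up to eight revealed neighbours --- which only aggravates the main problem above.
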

Next, we state a result providing exponential decay of correlations in the subcritical regime. 

\begin{theorem}[\cite{DC18}, Theorem 3.3]\label{DC theorem}
For every $q\in [0, 1/2)$, there exists $c_q > 0$ such that, for all $x\in \mathbb Z^2$,  
$$\mathbb P_q(\bo\leftrightarrow x) \le \exp(-c_q\|x\|).$$
\end{theorem}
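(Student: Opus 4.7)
The plan is to invoke the Duminil-Copin--Tassion proof of sharpness of the phase transition for Bernoulli bond percolation together with Kesten's classical result that the bond percolation threshold of $\mathbb Z^2$ equals $1/2$. Since the statement concerns $q\in [0,1/2)$, our target is the strictly subcritical regime, where one both expects and must show exponential decay of the two-point function. For every finite set $S\subset \mathbb Z^2$ containing the origin, the key auxiliary quantity is
$$\varphi_q(S) \;:=\; q \sum_{\substack{x\in S,\, y\notin S \\ x\sim y}} \mathbb P_q(\bo \xleftrightarrow{S} x),$$
and the auxiliary threshold $\tilde q_c := \sup\{q\in [0,1] : \exists \text{ finite } S\ni \bo \text{ with } \varphi_q(S) < 1\}$.

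In the first step, I would fix $q<\tilde q_c$ and choose $S\subset \Lambda(L)\cap \mathbb Z^2$ realizing $\varphi_q(S)<1$. For $x$ with $\|x\|$ large, I decompose $\{\bo \leftrightarrow x\}$ according to the edge $(x_0,y_0)$ through which the cluster of $\bo$ first exits $S$. Applying the BK inequality to the disjoint events $\{\bo\xleftrightarrow{S} x_0\}$ and $\{y_0\leftrightarrow x\}$ and using translation invariance yields
$$\mathbb P_q(\bo\leftrightarrow x) \;\le\; \sum_{(x_0,y_0)\in \partial S} q\,\mathbb P_q(\bo\xleftrightarrow{S} x_0)\,\mathbb P_q(y_0\leftrightarrow x).$$
Iterating this decomposition $\Theta(\|x\|/L)$ times gives $\mathbb P_q(\bo\leftrightarrow x)\le \varphi_q(S)^{\Omega(\|x\|/L)}$, which is exponential decay with some $c_q>0$.

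In the second step, for $q>\tilde q_c$ I would prove that $\theta(q)>0$. Russo's formula expresses the derivative $\frac{d}{dq}\mathbb P_q(\bo\leftrightarrow \partial\Lambda(n))$ as a sum over pivotal edges; the OSSS randomized-algorithm inequality, applied to a decision tree that explores the cluster of $\bo$ layer by layer, upgrades this into a differential inequality of roughly the form
$$\frac{d}{dq}\mathbb P_q(\bo\leftrightarrow \partial\Lambda(n)) \;\ge\; \frac{c}{\max_{S\subset \Lambda(n)}\varphi_q(S)}\,\mathbb P_q(\bo\leftrightarrow \partial\Lambda(n)).$$
Since the definition of $\tilde q_c$ forces $\varphi_q(S)\ge 1$ for every finite $S$ whenever $q>\tilde q_c$, integrating in $q$ and letting $n\to\infty$ gives $\theta(q)>0$, hence $\tilde q_c \ge p_c(\mathbb Z^2)$. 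Combined with the reverse inequality from the first step and with Kesten's theorem $p_c(\mathbb Z^2)=1/2$, this yields $\tilde q_c = 1/2$, so the first step applies throughout $[0,1/2)$.

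The main obstacle is the second step, i.e.\ the OSSS-based differential inequality identifying $\tilde q_c$ with the true critical point; this is the genuine content of the DCT argument and requires a careful bound on the revealment probabilities of the exploration algorithm. The first step and the final invocation of Kesten's theorem are essentially bookkeeping. In the two-dimensional setting one could alternatively bypass the first step by a planar-duality argument: for $q<1/2$ the dual Bernoulli($1-q$) percolation is supercritical, and controlling the crossings of dual annuli around $\bo$ enforces that the cluster of $\bo$ has exponential tails, giving the same conclusion by a more classical route.
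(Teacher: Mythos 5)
The paper does not actually prove this statement: it is imported verbatim as Theorem~3.3 of the cited reference \cite{DC18}, and your sketch reproduces exactly the argument given there, namely the Duminil-Copin--Tassion sharpness machinery (exponential decay for $q<\tilde q_c$ via the exit-edge decomposition, BK, and iteration of $\varphi_q(S)<1$, together with a differential inequality identifying $\tilde q_c$ with the critical point) combined with Kesten's theorem $p_c(\mathbb Z^2)=1/2$. The outline is correct and you rightly flag the second step as the substantive one; the only imprecision is cosmetic, since in the relevant differential inequality the quantity $\inf_{S\ni \bo}\varphi_q(S)$ appears as a multiplicative factor in the numerator (which is what being above $\tilde q_c$ lower-bounds) rather than $\max_S\varphi_q(S)$ in the denominator.
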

The last result we shall need on Bernoulli percolation on $\mathbb Z^2$ provides some concentration for the size of the largest component in finite volume. 

\begin{theorem}[see \cite{PP96}, Theorem 4]\label{PP theorem}
Fix $n\ge 1$, $H_n = \mathbb Z^2\cap [0, \sqrt{n}]^2$ and $q\in (1/2, 1]$. Denote by $H_{n,q}$ the graph obtained from $H_n$ after Bernoulli bond percolation with parameter $q$. Then, for every $\eps > 0$, there exists $c = c(q, \eps) > 0$ such that $\mathbb P_q(L_1(H_{n,q}) \le (1-\eps) \mathbb E L_1(H_{n,q}))\le  \exp(-c n)$.
\end{theorem}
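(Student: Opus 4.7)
My plan is to prove the concentration via a block renormalization argument, reducing the question to a large-deviations estimate for a highly supercritical site percolation on a coarse-grained lattice.

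First I would fix $q \in (1/2, 1]$ and a large integer $K$ to be chosen later, then tile the box $[0, \sqrt{n}]^2$ with sub-boxes of side $K$ indexed by $(i,j) \in \{0, \dots, \lfloor \sqrt{n}/K \rfloor - 1\}^2$. Call a block $B_{i,j}$ \emph{good} if there is an open circuit in the annulus between the concentric $K$-box and the concentric $3K$-box, and this circuit is connected to each of the four sides of the enclosing $5K$-box. By the Russo--Seymour--Welsh / Grimmett--Marstrand box-crossing estimates valid for supercritical two-dimensional Bernoulli percolation, the probability that a fixed block is good tends to $1$ as $K \to \infty$. Since goodness depends only on the edge configuration in a $5K \times 5K$ neighbourhood, the collection of good blocks forms a finite-range dependent site percolation on $\mathbb Z^2$ with marginal probability arbitrarily close to $1$. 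By Theorem~\ref{thm LSS} (Liggett--Schonmann--Stacey), this dominates Bernoulli site percolation on $\mathbb Z^2$ with parameter $q'<1$ that can be pushed as close to $1$ as needed by enlarging $K$.

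Next I would observe that any two adjacent good blocks have circuits in overlapping annuli that must intersect by planarity, so their open clusters merge in the original bond percolation. Consequently, all good blocks in a single component of the coarse-grained site percolation lie in a single open cluster of $H_{n,q}$. The final step would be to invoke the standard large-deviation result for very supercritical site percolation in a box of side $\sqrt n / K$: once $q'$ exceeds a threshold close to $1$, the largest site cluster contains at least a fraction $1-\eta$ of the sites except on an event of probability $\exp(-c(\eta,q')\, n/K^2)$. A first-moment computation using $\mathbb P_q(x \leftrightarrow \infty) = \theta(q)$ together with a boundary correction gives $\mathbb E L_1(H_{n,q}) = n\theta(q)(1+o(1))$, and each good block contributes $K^2\theta(q)(1+o_K(1))$ vertices to the merged cluster on average. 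Choosing $K$ large enough so that the losses from boundary blocks and non-good blocks are negligible compared to $\epsilon\, n\theta(q)$ yields $L_1(H_{n,q}) \ge (1-\epsilon)\mathbb E L_1(H_{n,q})$ off an event of probability $\exp(-cn)$.

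The main obstacle is the large-deviation step in the coarse-grained model, which itself is not a simple Chernoff bound because of the dependence introduced by the condition of being in the largest cluster; one proves it by a further Peierls-type contour argument showing that the complement of the giant cluster decomposes into small pieces each paying an exponential cost in their size. A secondary delicate point is that the chosen definition of ``good'' must genuinely force merging across adjacent blocks, which is why the circuits are required to live in annuli extending into the neighbour's territory rather than inside the block itself.
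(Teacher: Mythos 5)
There is no internal proof to compare against here: the paper quotes this statement from \cite{PP96} (Theorem 4) and uses it as a black box. Your renormalisation skeleton — good blocks defined by circuits in overlapping annuli connected across block boundaries, Liggett--Schonmann--Stacey domination by highly supercritical site percolation, merging of adjacent blocks by planarity, and a first-moment identification of $\mathbb E L_1(H_{n,q})$ with $\theta(q)n(1+o(1))$ — is the standard and essentially sound way to approach results of this type.

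The genuine gap is in your final step, and it is fatal to the stated rate. The ``standard large-deviation result'' you invoke — that in very supercritical site percolation on a box with $m^2$ sites the largest cluster misses a fraction $\eta$ of the sites only on an event of probability $\exp(-c m^2)$ — is false, and the Peierls argument you sketch cannot deliver it: a closed contour of length $\ell$ costs $\exp(-c\ell)$ but can enclose (or, using the boundary of the box, cut off) an area of order $\ell^2$, so detaching a positive fraction of the volume from the giant cluster costs only $\exp(-\Theta(m))$, i.e.\ surface order. The same obstruction shows that the statement as transcribed cannot be proved at all for $q<1$ and small $\eps$: closing the $\sqrt n+1$ horizontal edges joining two adjacent columns of $H_n$ costs $(1-q)^{\sqrt n+1}=\exp(-O(\sqrt n))$ and splits the box into two independent halves, on which $L_1(H_{n,q})\approx\tfrac12\,\mathbb E L_1(H_{n,q})$ with conditional probability $1-o(1)$; hence $\mathbb P_q\big(L_1(H_{n,q})\le(1-\eps)\mathbb E L_1(H_{n,q})\big)\ge\exp(-O(\sqrt n))$ for $\eps<1/2$, which is incompatible with a bound $\exp(-cn)$. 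The correct rate for the lower tail of the largest cluster is surface order, $\exp(-c\sqrt n)$ for a box of side length $\sqrt n$ (this is what the cited reference provides, and it is all the present paper actually uses, namely that $L_1=\Omega_q(n)$ a.a.s.). Your argument, with the last step weakened to the surface-order contour estimate, does yield that correct weaker statement.
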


Since the vertices of our graph are given by a homogeneous Poisson Point Process, the next formula, known under the name \emph{Campbell-Mecke formula}, will be useful in our analysis. 
We refer to \cite{PenroseDraft} for this version of the theorem, which can simply be deduced from more standard versions by integrating first against the Bernoulli percolation measure, conditionally on the Poisson Point Process.  

\begin{lemma}[Campbell-Mecke formula]\label{campbell-mecke}
Fix $\lambda > 0$, $p\in [0,1]$, and let $G=G(\lambda,p)$. For any non-negative measurable function $f$ on the set of pairs $(x,\mathcal G)$, where $\mathcal G$ is a graph whose vertex set is a locally finite subset of $\mathbb R^2$
and $x$ is a vertex of $\mathcal G$, one has 
$$
\mathbb E\Big[\sum_{X\in \mathrm{Po}(\lambda)} f\big(X, G\big)\Big] = \lambda \int_{\mathbb R^2}
\mathbb E[ f(x, G_x)]\, \mathrm{d}x, 
$$
where $G_x$ is obtained by adding $x$ to the vertex set of $G$, and independently for each other vertex $y$ of $G$ at distance smaller than one from $x$, 
an edge is added between $x$ and $y$ with probability $p$. 
\end{lemma}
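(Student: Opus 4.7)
The plan is to reduce the statement to the classical Mecke formula for a Poisson point process (see e.g.~\cite{Pen03}), which asserts that for any non-negative measurable functional $F$ on pairs $(x,V)$ with $x\in V\subseteq\mathbb R^2$,
$$
\mathbb E\Big[\sum_{X\in \mathrm{Po}(\lambda)} F(X, \mathrm{Po}(\lambda))\Big] \;=\; \lambda \int_{\mathbb R^2} \mathbb E\big[F(x, \mathrm{Po}(\lambda)\cup\{x\})\big]\, \mathrm{d}x.
$$
The only thing one has to handle beyond this is the extra layer of randomness coming from the Bernoulli bond percolation defining $G$, and this will be dealt with by conditioning on the point process before applying the formula above.

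Concretely, let $\Pi:=\mathrm{Po}(\lambda)$ and, for every locally finite $V\subseteq \mathbb R^2$ and every $x\in V$, set
$$
F(x,V) \;:=\; \mathbb E\big[f(x,\widetilde G(V))\big],
$$
where $\widetilde G(V)$ is the random graph on vertex set $V$ obtained by independent Bernoulli-$p$ percolation on the edges joining pairs of points of $V$ at Euclidean distance at most one (with these Bernoulli variables taken to be independent of $\Pi$). This $F$ is non-negative and measurable. Since, conditionally on $\Pi$, the percolation defining $G$ is independent of $\Pi$, one has $\mathbb E[f(X,G)\mid \Pi]=F(X,\Pi)$ for each $X\in\Pi$. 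Summing, taking expectations, and applying the Mecke formula to $F$ gives
$$
\mathbb E\Big[\sum_{X\in \Pi} f(X, G)\Big] \;=\; \mathbb E\Big[\sum_{X\in \Pi} F(X, \Pi)\Big] \;=\; \lambda \int_{\mathbb R^2} \mathbb E\big[F(x, \Pi\cup\{x\})\big]\, \mathrm{d}x.
$$

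It remains to identify the integrand with $\mathbb E[f(x,G_x)]$. By construction, $F(x,\Pi\cup\{x\})$ is the expectation of $f(x,\widetilde G(\Pi\cup\{x\}))$ with respect to the percolation on all edges of the geometric graph on $\Pi\cup\{x\}$. Coupling the Bernoulli variables associated to edges between points of $\Pi$ with those used to build $G$, the graph $\widetilde G(\Pi\cup\{x\})$ is exactly $G$ together with the extra vertex $x$ and, independently, an edge from $x$ to each $y\in \Pi$ with $\|x-y\|<1$ retained with probability $p$; this is precisely the definition of $G_x$. Taking expectations yields $\mathbb E[F(x,\Pi\cup\{x\})]=\mathbb E[f(x,G_x)]$, which finishes the proof. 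There is no genuine obstacle here: the whole argument is a bookkeeping exercise whose only subtlety is ensuring that the edge randomness and the point-process randomness are disentangled correctly before invoking the classical Mecke formula.
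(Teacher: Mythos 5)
Your argument is correct and is exactly the route the paper indicates: the paper gives no detailed proof but explicitly says the lemma "can simply be deduced from more standard versions by integrating first against the Bernoulli percolation measure, conditionally on the Poisson Point Process," which is precisely your conditioning on $\Pi$ and applying the classical Mecke formula to $F(x,V)=\mathbb E[f(x,\widetilde G(V))]$. No further comment is needed.
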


We continue with a particular case of a theorem by Franceschetti, Penrose and Rosoman~\cite{FPR11} on the critical intensity for the appearance of a giant component in $G(\lambda, p)$.

\begin{theorem}[see Theorem~2.3 in~\cite{FPR11}]\label{thm:FPR}
For every $p_1, p_2\in [0,1]$ such that $p_1 < p_2$, $\lambda_c(p_1) > \lambda_c(p_2)$.
\end{theorem}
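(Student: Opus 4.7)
The weak inequality $\lambda_c(p_1)\ge \lambda_c(p_2)$ follows immediately from the standard monotone coupling of the bond percolations on a common Poisson process: generate $\mathrm{Po}(\lambda)$ together with IID Uniform variables $(U_{x,y})$ attached to each candidate edge, and declare $\{x,y\}$ open in $G(\lambda,p)$ iff $U_{x,y}\le p$. Then $G(\lambda,p_1)$ is an edge-subgraph of $G(\lambda,p_2)$ for every realization, so $\theta(\lambda,p_1)\le \theta(\lambda,p_2)$ and the desired inequality follows.

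The strict inequality is the substantive content. My plan is to combine a Grimmett--Marstrand-type finite-size criterion with a sprinkling/enhancement argument. First, I would establish that $\theta(\lambda,p)>0$ is equivalent to the existence of a scale $N$ at which both horizontal and vertical crossings of $N\times N$ squares in $G(\lambda,p)$ occur with probability at least $1-\alpha$, for some small absolute constant $\alpha>0$. The nontrivial implication follows by declaring an $N$-block \emph{good} when it enjoys the requisite crossings (plus some buffer allowing neighboring blocks to glue together), and comparing the resulting field of good blocks with supercritical Bernoulli bond percolation on $\mathbb Z^2$ via Theorem~\ref{thm LSS}. Since box-crossing probabilities tend to $1$ as $N\to \infty$ throughout the supercritical phase, for any $\lambda^{*}>\lambda_c(p_1)$ one can arrange the crossing probability at $(\lambda^{*},p_1)$ to exceed $1-\alpha^{10}$ by choosing $N$ large.

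Given such $\lambda^{*}$ and $N$, I would then couple $G(\lambda^{*},p_1)$ with $G(\lambda^{*}(1-\eps),p_2)$ by (i) thinning $\mathrm{Po}(\lambda^{*})$ independently with retention probability $1-\eps$, and (ii) declaring open the additional edges with $U_{x,y}\in (p_1,p_2]$ independently of the thinning. A given crossing uses $\Theta_\lambda(N^2)$ Poisson points, so the thinning destroys it with probability at most $O(\eps N^2)$; on the other hand, around each deleted vertex the sprinkled edges provide a positive density of candidate bypasses, each present with probability $p_2-p_1$. Choosing first $N$ large and then $\eps$ small in a manner depending on $p_2-p_1$ and $N$, one shows that the crossing survives with probability at least $1-\alpha$ at the new parameters $(\lambda^{*}(1-\eps),p_2)$, so the finite-size criterion yields $\theta(\lambda^{*}(1-\eps),p_2)>0$ and hence $\lambda_c(p_2)\le \lambda^{*}(1-\eps)$. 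Taking $\lambda^{*}=\lambda_c(p_1)+\eta$ with $\eta$ sufficiently small relative to $\eps\lambda_c(p_1)$ then gives $\lambda_c(p_2)< \lambda_c(p_1)$.

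The main obstacle is the quantitative sprinkling step: the argument requires that the density $p_2-p_1$ of extra edges actually suffices to repair the crossings disrupted by an $\eps$-fraction of vertex deletions, uniformly in the large scale $N$. Concretely, one must exhibit with high probability, along typical crossings, a $\Theta(N^2)$-sized collection of disjoint \emph{bridging opportunities} in which a single sprinkled edge can bypass a would-be deleted vertex. This is an enhancement argument in the spirit of Aizenman--Grimmett adapted to the continuum setting, and it is the technical heart of~\cite{FPR11}.
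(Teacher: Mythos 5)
This statement is not proved in the paper at all: it is imported verbatim as Theorem~2.3 of~\cite{FPR11} and used as a black box precisely so that the strict-monotonicity argument does not have to be reproduced (the authors say explicitly that the hard half of Proposition~\ref{thm critical} ``essentially follows from the results proved by Franceschetti, Penrose and Rosoman''). So there is no in-paper proof to compare against; the question is whether your sketch constitutes a self-contained proof. It does not. The weak inequality $\lambda_c(p_1)\ge\lambda_c(p_2)$ via the monotone coupling is fine, and your outline of the strict inequality (finite-size criterion, renormalization via Theorem~\ref{thm LSS}, thinning-plus-sprinkling coupling) is a reasonable description of the \emph{shape} of an Aizenman--Grimmett-type enhancement argument. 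But the decisive step is exactly the one you defer: you need the repair probability furnished by the extra edges with $U_{x,y}\in(p_1,p_2]$ to beat the damage caused by $\eps$-thinning \emph{uniformly in $N$}. Your own accounting shows why this cannot be waved through: a crossing of an $N\times N$ box involves $\Theta(N)$ points at least, so the naive bound forces $\eps=o(1/N)$, and since $N=N(\eta)\to\infty$ as $\lambda^{*}\downarrow\lambda_c(p_1)$, the admissible $\eps$ shrinks with $N$ and the multiplicative gain $\lambda^{*}\eps$ vanishes before it can absorb $\eta$. The scheme ``choose $N$ large, then $\eps$ small depending on $N$, then $\eta$ small depending on $\eps$'' is circular unless $\eps$ can be taken to depend only on $p_2-p_1$ and $\lambda$, and establishing that uniform enhancement estimate (typically via a Russo-formula / differential-inequality comparison of the two partial derivatives of the crossing probability, adapted to the continuum) is the entire content of the theorem. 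You acknowledge this yourself by calling it ``the technical heart of~\cite{FPR11}''; acknowledging the gap does not close it.

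In short: the proposal is a strategy statement, not a proof. If the intention is to match the paper, the correct move is simply to cite~\cite{FPR11}, as the authors do; if the intention is to prove the theorem independently, the uniform bridging/enhancement estimate must actually be carried out, and nothing in the sketch does so.
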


Finally, we state a standard Chernoff-type inequality for Poisson random variables. 

\begin{lemma}\label{Chernoff poisson}
Let $X$ be a Poisson random variable with mean $\lambda > 0$. Then, for any $\varepsilon > 0$,
$$\mathbb P(|X-\lambda| \ge \varepsilon)\le  2\exp\left(-\frac{\varepsilon^2}{2(\lambda +\varepsilon)}\right).$$
\end{lemma}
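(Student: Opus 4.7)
The plan is to prove the result via the standard Chernoff/Markov approach applied to the moment generating function of a Poisson variable, then to massage the resulting exponent into the desired form.

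First I would recall that the MGF of $X \sim \mathrm{Poisson}(\lambda)$ is $\mathbb{E}[e^{tX}] = \exp(\lambda(e^t-1))$ for all $t \in \mathbb{R}$. For the \emph{upper tail}, I would apply Markov's inequality with $t > 0$: for any $\varepsilon > 0$,
\[
\mathbb{P}(X \ge \lambda + \varepsilon) \;\le\; e^{-t(\lambda+\varepsilon)}\,\mathbb{E}[e^{tX}] \;=\; \exp\bigl(\lambda(e^t-1) - t(\lambda+\varepsilon)\bigr),
\]
and optimize over $t$ by choosing $t = \log(1+\varepsilon/\lambda) > 0$. A direct computation then yields
\[
\mathbb{P}(X \ge \lambda+\varepsilon) \;\le\; \exp\bigl(-\lambda\, h(\varepsilon/\lambda)\bigr), \qquad h(u) := (1+u)\log(1+u) - u.
\]
For the \emph{lower tail} I would do the analogous calculation applied to $-X$ with $t > 0$; assuming $\varepsilon < \lambda$ (otherwise $\mathbb{P}(X \le \lambda-\varepsilon) = 0$ and there is nothing to prove), the optimal choice $t = \log(\lambda/(\lambda-\varepsilon))$ produces
\[
\mathbb{P}(X \le \lambda-\varepsilon) \;\le\; \exp\bigl(-\lambda\, h(-\varepsilon/\lambda)\bigr),
\]
where the same formula for $h$ is used on $(-1,0)$.

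It then remains to verify the elementary calculus inequality
\[
\lambda\, h(\pm\varepsilon/\lambda) \;\ge\; \frac{\varepsilon^2}{2(\lambda+\varepsilon)}.
\]
For the upper tail, this reduces after the substitution $u = \varepsilon/\lambda \ge 0$ to showing $(1+u)\log(1+u) - u \ge u^2/(2(1+u))$. I would prove this by introducing $F(u) = 2(1+u)^2\log(1+u) - 2u(1+u) - u^2$, noting $F(0)=0$, and computing $F'(u) = 4\bigl[(1+u)\log(1+u)-u\bigr] = 4h(u) \ge 0$ (since $h(0)=0$ and $h'(u) = \log(1+u) \ge 0$ on $[0,\infty)$). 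For the lower tail, the right-hand side is dominated by $\varepsilon^2/(2\lambda)$, so it actually suffices to verify the stronger bound $\lambda h(-\varepsilon/\lambda) \ge \varepsilon^2/(2\lambda)$, i.e.\ $h(-u) \ge u^2/2$ for $u \in [0,1)$; again $h(0)=0$, and differentiating twice gives $h''(-u) = 1/(1-u) \ge 1$, so two integrations from $0$ yield the claim. Finally, a union bound over the two tails produces the factor $2$ in the statement.

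The only mildly delicate step is the elementary inequality on $h$; everything else is the textbook exponential Markov argument. I would present the computation succinctly since these bounds on $h$ are well known and the lemma is used only as a black box elsewhere in the paper.
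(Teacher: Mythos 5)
Your proof is correct and is the textbook exponential-Markov argument; the paper itself states this lemma as a standard Chernoff-type inequality and gives no proof, so there is nothing to compare against, and the elementary bounds $h(u)\ge u^2/(2(1+u))$ and $h(-u)\ge u^2/2$ are verified exactly as one should. The only nitpick is the boundary case $\varepsilon=\lambda$, where $\mathbb P(X\le \lambda-\varepsilon)=\mathbb P(X=0)=e^{-\lambda}$ rather than $0$; the claimed bound still holds there since $e^{-\lambda}\le \exp(-\lambda/4)$ (or by letting $\varepsilon\uparrow\lambda$ in your estimate), so nothing breaks.
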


\section{Proof of Theorem~\ref{thm critical}}\label{sec 3}

First, we fix $\lambda\in (\lambda_0, \infty)$ and show that $\lambda_c(p_c(\lambda)) = \lambda$. We argue by contradiction, assuming that $\lambda_c(p_c(\lambda)) \neq \lambda$. We distinguish two cases:
\begin{itemize}
    \item if $\lambda_c(p_c(\lambda)) > \lambda$, then there is $\eps > 0$ such that $\lambda+\eps<\lambda_c(p_c(\lambda))$ and thus  $\theta(\lambda+\eps, p_c(\lambda)) = 0$, and hence $p_c(\lambda+\eps)\ge p_c(\lambda)$. However, $p_c$ is a non-increasing function, so $p_c(\lambda+\eps) = p_c(\lambda)$;
    \item if $\lambda_c(p_c(\lambda)) < \lambda$, then there is $\eps > 0$ such that $\lambda-\eps>\lambda_c(p_c(\lambda))$ and thus     $\theta(\lambda-\eps, p_c(\lambda)) > 0$, and hence $p_c(\lambda-\eps)\le p_c(\lambda)$. However, $p_c$ is a non-increasing function, so $p_c(\lambda-\eps) = p_c(\lambda)$.
\end{itemize}
In both cases, our assumption leads to the existence of $\lambda_1, \lambda_2\in (\lambda_0, \infty)$ satisfying $\lambda_1 < \lambda_2$ and $p_c(\lambda_1) = p_c(\lambda_2)$. 

Now, set $p = p_c(\lambda_2)+\delta$ for some $\delta > 0$ to be chosen sufficiently small. It thus holds that $G(\lambda_2,p)$ is supercritical. On the one hand, by a classical result of Hammersley~\cite{Ham61} the bond percolation threshold of $G(\lambda_2, p)$, which is $p_c(\lambda_2)/p$, is dominated by its site percolation threshold, which is $\lambda_c(p)/\lambda_2$. Hence, $\tfrac{\lambda_2}{p} \le \tfrac{\lambda_c(p)}{p_c(\lambda_2)}$. Hence, we find that $p_c(\lambda_2)/p\leq \lambda_c(p)/\lambda_2$. On the other hand,  $\theta(\lambda_1, p) > 0$ since $p > p_c(\lambda_1)$, so $\lambda_c(p)\le \lambda_1$. We conclude that $\tfrac{\lambda_2}{p}\le \tfrac{\lambda_1}{p_c(\lambda_2)}$, which rewrites as $\tfrac{\lambda_2}{\lambda_1}\le \tfrac{p}{p_c(\lambda_2)} = 1 + \tfrac{\delta}{p_c(\lambda_2)}$. Choosing $\delta$ sufficiently small (and using that $p_c(\lambda_2) > 0$) leads to a contradiction, which proves the equality.

To deduce the equality $p_c(\lambda_c(p)) = p$ for any fixed $p\in (0,1)$, a similar reasoning provides two distinct $p_1, p_2\in (0,1)$ satisfying $\lambda_c(p_1) = \lambda_c(p_2)$, which contradicts the statement of Theorem~\ref{thm:FPR}.
\hfill $\square$

\begin{remark}\label{remark.GS}
For an infinite connected graph $H$ with site percolation threshold $s_c$, bond percolation threshold $p_c$ and maximum degree $D \ge 3$, the equality $p_c(s_c(p))=p$ holds along similar lines without using Theorem~\ref{thm:FPR} as a black box. We provide a sketch of the argument. Suppose for contradiction that $p_c(s_c(p)) \neq p$ for some $p$ as above. Then, as in the proof of Theorem~\ref{thm critical}, there exist $p_1 < p_2$ with $s_c(p_1)=s_c(p_2)\in (0,1)$. Set $s=s_c(p_2)+\delta$ for some $\delta\in (0, 1-s_c(p_2))$ to be chosen below, and consider the graph $H(s, p_2)$ obtained from $H$ after site percolation with parameter $s$ and bond percolation with parameter $p_2$. By a result of Grimmett and Stacey~\cite{GS} we have $s_c \le 1-(1-p_c)^{D-1}$, or equivalently $(1-p_c)^{D-1} \le 1-s_c$ (see also \cite{CS00} for stronger results in this direction). Moreover, note that $\theta(s, p_1) > 0$ since $s > s_c(p_1)$, and thus $p_c(s) \le p_1$. 
Hence, applying the above inequality from~\cite{GS} for $\widehat{H} = H(s, p_2)$  (which has maximum degree at most $D$, site percolation threshold $s_c(p_c)/s$ and bond percolation threshold $p_c(s)/p_2$), we infer 
$$
\left(1-\frac{p_1}{p_2}\right)^{D-1} \le \left(1-\frac{p_c(s)}{p_2}\right)^{D-1}= (1-b_c(\widehat H))^{D-1} \le 1-s_c(\widehat H) = \frac{s - s_c(p_c)}{s}=\frac{\delta}{s}.
$$
Hence, choosing $\delta$ sufficiently small (and using $s_c(p_2) > 0$), we have the desired contradiction, and thus $p_c(\lambda_c(p))=p$.
\end{remark}

\section{\texorpdfstring{Proof of Proposition~\ref{thm cor thm 1}}{}}\label{sec 5}

We first prove that Bernoulli bond percolation preserves local convergence. 
For completeness, we also include the case of site percolation as it may be of independent interest.

\begin{lemma}\label{lem.convergence}
Let $(\mathcal G_k)_{k\ge 1}$ be a sequence of graphs of geometric type converging locally to $\mathcal G$ in the sense of Definitions \ref{def.graph} and \ref{def.convergence}. For $p\in [0,1]$, consider the subgraphs $\mathcal G_k^b(p)$ and $\mathcal G_k^s(p)$ obtained after performing respectively Bernoulli bond and site percolation on $\mathcal G_k$, and similarly for $\mathcal G^b(p)$ and $\mathcal G^s(p)$. Then, these graphs are of geometric type and moreover $(\mathcal G_k^b(p))_{k\ge 1}$ and $(\mathcal G_k^s(p))_{k\ge 1}$ converge locally respectively to $\mathcal G^b(p)$ and $\mathcal G^s(p)$.   
\end{lemma}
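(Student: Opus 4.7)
The plan is to couple all relevant percolated graphs via a single external Bernoulli field and then show that percolation commutes cleanly with restriction to any bounded domain. First, enrich the probability space with families $(\xi_{\{x,y\}})$ and $(\eta_x)$ of i.i.d.\ Bernoulli$(p)$ random variables indexed respectively by unordered pairs of $\mathbb R^2$ and by points of $\mathbb R^2$, jointly independent of each other and of $\mathcal G_k$ and $\mathcal G$. Define $\mathcal G_k^b(p)$ as the subgraph of $\mathcal G_k$ retaining each edge $\{x,y\}$ for which $\xi_{\{x,y\}}=1$, and $\mathcal G_k^s(p)$ as the subgraph induced by $\{x : \eta_x=1\}$, with $\mathcal G^b(p)$ and $\mathcal G^s(p)$ defined analogously using the same field. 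Verifying that the resulting graphs are of geometric type is immediate: translation invariance follows since the joint law of $(\mathcal G_k, \xi, \eta)$ and the thinning operations are translation invariant; and for $K$-dependence, if $\Omega_1, \Omega_2$ lie at distance at least $K$, then $(\mathcal G_k^b(p))|_{\Omega_i}$ is a measurable function of $(\mathcal G_k)|_{\Omega_i}$ together with coins indexed by pairs inside $\Omega_i$, and these two sources of randomness are independent across the two domains. The site percolation argument is identical.

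The main step is the local convergence claim. Fix a bounded domain $\Omega$ and any bounded test function $f$ on finite graphs embedded in $\Omega$. Conditioning on $(\mathcal G_k)|_\Omega$ gives
$$\mathbb E\bigl[f\bigl((\mathcal G_k^b(p))|_\Omega\bigr)\bigr] = \mathbb E\bigl[\phi_f\bigl((\mathcal G_k)|_\Omega\bigr)\bigr], \qquad \phi_f(H) := \mathbb E[f(H^b(p))],$$
where $H^b(p)$ denotes Bernoulli bond percolation applied to the finite graph $H$. Since $\phi_f(H)$ is an explicit convex combination of values of $f$ over the finitely many subgraphs of $H$, it is continuous in $H$ whenever $f$ is, so the hypothesis that $(\mathcal G_k)|_\Omega$ converges in distribution to $\mathcal G|_\Omega$ yields $\mathbb E[\phi_f((\mathcal G_k)|_\Omega)] \to \mathbb E[\phi_f(\mathcal G|_\Omega)] = \mathbb E[f((\mathcal G^b(p))|_\Omega)]$. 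In the pointwise form of Definition~\ref{def.convergence}, this unfolds as
$$\mathbb P\bigl((\mathcal G_k^b(p))|_\Omega = H'\bigr) = \sum_{H \supseteq H',\, V(H)=V(H')} \mathbb P\bigl((\mathcal G_k)|_\Omega = H\bigr)\, p^{|E(H')|} (1 - p)^{|E(H)| - |E(H')|},$$
a \emph{finite} sum over the possible edge sets on the fixed vertex set $V(H')$, which can be passed to the limit term by term. The site percolation case follows from the same argument, with vertex thinning replacing edge thinning.

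The only real obstacle is notational: one must fix a natural topology on finite graphs embedded in $\Omega$ (points in $\Omega$ together with a combinatorial edge set) so that continuity of the map $H \mapsto \phi_f(H)$ is transparent. Once this is pinned down, the proof reduces entirely to the observation that Bernoulli percolation is an external independent operation which commutes with restriction to any domain, and no further probabilistic input is required.
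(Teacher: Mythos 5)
Your treatment of bond percolation is correct and takes essentially the same route as the paper: every graph that can produce a given $H'$ under bond percolation has the same vertex set $V(H')$, so the relevant sum is finite and the limit passes term by term. The genuine gap is the final claim that ``the site percolation case follows from the same argument, with vertex thinning replacing edge thinning.'' It does not: a graph $H$ that gives rise to $H'$ after \emph{site} percolation may contain arbitrarily many vertices besides $V(H')$ (all of which happen to be deleted), so the set of pre-images is infinite and the term-by-term passage to the limit is unjustified. This is exactly where the paper's proof departs from the bond case: it first applies Fatou's lemma to obtain only $\liminf_{k\to\infty}\mathbb P\big(\mathcal G_k^s(p)_{|\Omega}=H\big)\ge \mathbb P\big(\mathcal G^s(p)_{|\Omega}=H\big)$ for each finite $H$, and then applies Fatou a second time to the sum over all finite $H$ (both sides summing to $1$) to rule out escape of mass and upgrade the inequality to an equality.

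Your test-function reformulation (that $\phi_f(H):=\mathbb E[f(H^s(p))]$ is a continuous functional, so weak convergence is preserved) would indeed sidestep the infinite sum, but it silently strengthens the hypothesis: Definition~\ref{def.convergence} provides only pointwise convergence of $\mathbb P\big((\mathcal G_k)_{|\Omega}=H\big)$ for each finite $H$, not convergence of expectations of bounded continuous test functions. Passing from the former to the latter requires precisely the ``no mass escapes'' step that the paper's second application of Fatou supplies. So the issue you set aside as ``only notational'' is in fact the substantive part of the site-percolation argument and needs to be carried out.
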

\begin{proof}
The fact that a graph of geometric type remains in this class after bond or site percolation is immediate from the definition. Now, assume that $(\mathcal G_k)_{k\ge 1}$ converges locally to $\mathcal G$, and let $p\in [0,1]$ be fixed. Let us consider first the case of bond percolation, which is slightly easier. Fix $\Omega$ to be some bounded domain of $\mathbb R^2$, and let $H$ be some  finite graph. Note that if $H$ is obtained from another graph $H'$ after bond percolation, then $H'$ belongs to the finite set $\mathcal A(H)$ of finite graphs with the same vertex set as $H$ and containing $H$ as a subgraph. 
Therefore, denoting by $E(H)$ the set of edges of a graph $H$, and taking advantage of the fact that $\mathcal A(H)$ is a finite set, we get 
\begin{align*}
\lim_{k\to \infty} \mathbb P\big(\mathcal G^b_k(p)_{|\Omega} = H\big) &  = \lim_{k\to \infty} \sum_{H'\in \mathcal A(H)}
\mathbb P\big((\mathcal G_k)_{|\Omega} = H'\big) p^{|E(H)|}(1-p)^{|E(H')|-|E(H)|}\\
& =\sum_{H'\in \mathcal A(H)}
\mathbb P\big(\mathcal G_{|\Omega} = H'\big) p^{|E(H)|}(1-p)^{|E(H')|-|E(H)|}  =  \mathbb P\big(\mathcal G^b(p)_{|\Omega} = H\big).
\end{align*}
We now consider the case of site percolation. The only additional difficulty is that now the set $\mathcal B(H)$ of graphs which can give rise to a fixed graph $H$ after site percolation is infinite. However, denoting by $V(H)$ the set of vertices of a graph $H$, and using Fatou's lemma, we deduce 
\begin{align*}
\liminf_{k\to \infty} \mathbb P\big(\mathcal G^s_k(p)_{|\Omega} = H\big) &  \ge \sum_{H'\in\mathcal B(H) }
\liminf_{k\to \infty}  \mathbb P\big((\mathcal G_k)_{|\Omega} = H' \big) p^{|V(H)|}(1-p)^{|V(H')|-|V(H)|}  \\
& = \sum_{H'\in \mathcal B(H)}
\mathbb P\big(\mathcal G_{|\Omega} = H'\big) p^{|V(H)|}(1-p)^{|V(H')|-|V(H)|}  = \mathbb P\big(\mathcal G^s(p)_{|\Omega} = H\big).
\end{align*}
Since this holds for any finite graph $H$, by another application of Fatou's lemma, we infer 
\begin{align*}
1 = \liminf_{k\to \infty} \sum_{H \text{ finite}}\mathbb P\big(\mathcal G^s_k(p)_{|\Omega} = H\big) \ge\sum_{H \text{ finite}}
\liminf_{k\to \infty}  \mathbb P\big((\mathcal G_k)_{|\Omega} = H \big)\ge \sum_{H \text{ finite}} \mathbb P\big(\mathcal G^s(p)_{|\Omega} = H\big) = 1.
\end{align*}
\noindent
Hence, for any finite graph $H$, $\lim_{k\to \infty} \mathbb P\big(\mathcal G^s_k(p)_{|\Omega} = H\big)= \mathbb P\big(\mathcal G^s(p)_{|\Omega} = H\big)$, which concludes the proof of the lemma.
\end{proof}

Now, we prove Theorem~\ref{thm cor thm 1}. Let $\lambda>\lambda_0$, and $K\ge 0$ be given. Let also $(\mathcal G_k)_{k\ge 1}$ 
be a sequence of $K$-dependent graphs of geometric type converging to $G(\lambda,1)$. We need to prove that $\limsup_{k\to \infty} p_c(\mathcal G_k) \le p_c(\lambda)$. To this end, we fix some $p>p_c(\lambda)$ and show that, with the notation from Lemma~\ref{lem.convergence}, for all $k$ sufficiently large, $\mathcal G_k^b(p)$ contains an infinite connected component.

The proof relies on a finite-size criterion. Consider a tessellation $\cT$ of $\mathbb R^2$ into squares of side length $\sqrt{m}$, where $m$ is a constant to be chosen sufficiently large later. For each square $Q$ in $\cT$, consider a partition of the square into $4$ smaller squares of side length $\sqrt m/2$, say $\{Q_i\}_{1\le i\le 4}$. Then, for a random graph with vertex set embedded in $\mathbb R^2$, consider the event $\mathcal A_Q$ that the following holds:    
\begin{itemize}
\item The second-largest component in $Q$ has size at most $\frac{\lambda \theta(\lambda,p)}{10}m$.
\item The largest components in each of the squares $Q_i$, for $i\in \{1,2,3,4\}$, have size at least $\frac{\lambda \theta(\lambda,p)}{8}m$.
\end{itemize}
Also, for two squares $Q$ and $Q'$ of $\cT$ sharing a common edge, consider the square $Q''$ of side length $\sqrt m$ in-between $Q$ and $Q'$, which is the union of the two smaller squares of $Q$ touching $Q'$ together with the two smaller squares of $Q'$ touching $Q$. Then, define 
$$\mathcal A_{Q,Q'} := \mathcal A_Q \cap \mathcal A_{Q'} \cap \mathcal A_{Q''}.$$

On the one hand, on $\mathcal A_{Q,Q'}$, the two largest components of $Q$ and $Q'$ are connected by construction. On the other hand, by Theorem~\ref{thm 1} (or by the results of \cite{PenroseDraft}) we know that for every $\delta>0$ and every sufficiently large $m$, $\mathbb P_{\lambda,p}(\mathcal A_{Q,Q'})>1-\delta$. Now, with a slight abuse of notation, denote by $\mathbb P_{k,p}$ the distribution of $\mathcal G_k^b(p)$. Then, by definition of the local convergence, Lemma~\ref{lem.convergence} and the fact that $\mathcal A_{Q,Q'}$ is a local event one has that, for any sufficiently large $k$ and any two squares $Q$ and $Q'$ sharing a common edge, $\mathbb P_{k,p}(\mathcal A_{Q,Q'})\ge 1-2\delta$. On the other hand, considering the graph isomorphic to $\mathbb Z^2$ with vertex set the squares in $\cT$, and declaring the edge between two neighboring squares $Q$ and $Q'$ open if the event $\mathcal A_{Q,Q'}$ holds, defines a bond percolation process on $\mathbb Z^2$. For every $m\ge K$, by hypothesis, this bond percolation is 1-dependent for each of the graphs $\mathcal G_k^b(p)$. Therefore, by Theorem~\ref{thm LSS}, choosing $\delta$ small enough (and then $m$ large enough) allows us to make this graph supercritical for any sufficiently large $k$, and thus proving that all the graphs $\mathcal G_k^b(p)$ have an infinite connected component.

The last part of the theorem is immediate since the critical bond percolation is a decreasing function for the partial order given by the inclusion of graphs. This concludes the proof of Proposition~\ref{thm cor thm 1}. \qed

\section{\texorpdfstring{Proof of Theorem~\ref{thm 1}}{}}\label{sec 4}
From now on, $\lambda > 0$ and $p\in (0,1]$ are fixed parameters such that $\lambda>\lambda_c(p)$. This in particular ensures that  $G=G(\lambda,p)$ contains an infinite connected component almost surely. 

The proof of Theorem~\ref{thm 1} is divided into three parts. In Section~\ref{sec.giant} we prove that a.a.s.\ $G_n$ contains a connected component of size $\Omega_{\lambda, p}(n)$. Despite the fact that this result has already been proved very recently by Penrose~\cite{PenroseDraft}, its proof serves as a base for the second and third steps. In Section~\ref{sec.second} we prove that a.a.s.\ the second-largest component in $G_n$ has size $\Theta_{\lambda, p}((\log n)^2)$, and finally in Section~\ref{sec:convergence} we show that $n^{-1} L_1(G_n)$ converges almost surely to $\lambda \theta(\lambda,p)$ as $n\to \infty$.

We introduce some additional notation, which will be used throughout this section.  A \emph{horizontal crossing} of a rectangle $[a,b]\times [c,d]$ in $G$ is a path in $G$ with edges, embedded as straight segments in the plane, such that:
\begin{itemize}
    \item the first edge intersects the segment connecting $(a,c)$ and $(a,d)$;
    \item all edges in the path but the first and the last ones are included in the rectangle $[a,b]\times [c,d]$;
    \item the last edge intersects the segment connecting $(b,c)$ and $(b,d)$.
\end{itemize}
A \emph{vertical crossing} is defined analogously. We denote by $\cH([a,b]\times [c,d])$ (respectively $\cV([a,b]\times [c,d])$) the event ``a path in $G$ crosses the rectangle $[a,b]\times [c,d]$ horizontally (vertically, respectively)''. We also let $\cH(b,d)$ and $\cV(b,d)$ denote the events $\cH([0,b]\times [0,d])$ and $\cV([0,b]\times [0,d])$, respectively.

\subsection{The existence of a giant component}\label{sec.giant}
This section is devoted to the proof of the fact that, in the supercritical regime, a.a.s.\ the graph $G_n$ contains a component of linear size, which we state as a separate theorem.

\begin{theorem}\label{theo:giant}
Assume that $\lambda>0$ and $p\in (0,1]$ are such that 
$\lambda>\lambda_c(p)$. Then a.a.s.\ one has $L_1(G_n)= \Theta_{\lambda,p}(n)$. 
\end{theorem}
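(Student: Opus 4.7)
The upper bound is immediate: $L_1(G_n) \le |V(G_n)|$ is Poisson with mean $\lambda n$, so by Lemma~\ref{Chernoff poisson} we have $|V(G_n)| \le 2\lambda n$ with probability $1-o(1)$. The lower bound is what requires work, and I would prove it by the classical renormalization scheme for supercritical percolation, combined with the sprinkling trick announced in the introduction.

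Fix $\delta \in (0, \lambda-\lambda_c(p))$ and use Poisson splitting to realize $\mathrm{Po}(\lambda) = P_1 \cup P_2$ with $P_1 \sim \mathrm{Po}(\lambda-\delta)$ and $P_2 \sim \mathrm{Po}(\delta)$ independent. Write $G_1$ for the subgraph of $G$ spanned by $P_1$ together with its surviving $p$-bonds, i.e.\ an instance of $G(\lambda-\delta,p)$, and note that $G_1 \subseteq G$ in this coupling. The heart of the argument is a box-crossing estimate: for every $\eta>0$ there exists $m_0=m_0(\eta)$ such that for all $m \ge m_0$,
\[
\mathbb P_{\lambda,p}(\cH(3\sqrt{m},\sqrt{m})) \ge 1-\eta,
\]
and the analogous estimate for vertical crossings. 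To prove this, I would first use the fact that $\lambda-\delta > \lambda_c(p)$ to extract from $G_1$ a \emph{geometric} crossing of the rectangle — a chain of edges of $G_1$ whose planar traces connect the two short sides — by combining the ergodicity of $G_1$, the uniqueness of its infinite cluster, and the square-root trick. This alone only provides a geometric crossing: the edges making up the chain may belong to distinct components of $G_1$ that meet merely at interior points of edges, so no graph-theoretic path of $G_1$ crosses the rectangle. Here the sprinkled process $P_2$ enters: at each such intersection, with positive probability a point of $P_2$ lands in a small disc around the intersection and opens, under the $p$-bond percolation, to both nearby edges, welding them into a single path in $G$. A union bound over the $O(m)$ intersection points upgrades the geometric crossing to an actual path of $G$ with probability at least $1-\eta$.

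With the box-crossing estimate in hand, I would renormalize on scale $\sqrt{m}$. Tile $\Lambda_n$ by a $\mathbb{Z}^2$-lattice of squares of side $\sqrt{m}$ and, for each such square $Q$, declare $Q$ \emph{good} if (i) the horizontal and vertical crossings of the four $3\sqrt{m}\times\sqrt{m}$ rectangles straddling $Q$ together with its neighbours all occur in $G$, and (ii) these crossings meet in a common sub-cluster of $G$ inside $Q$. By the previous step, $\mathbb{P}(Q \text{ is good}) \ge 1-C\eta$. Because the event depends on $G$ only in a bounded neighbourhood of $Q$, the good squares form a finite-range dependent site percolation on $\mathbb{Z}^2$; Theorem~\ref{thm LSS} (in its routine site-percolation analogue) then provides domination by a supercritical Bernoulli percolation once $\eta$ is chosen small enough. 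A finite-volume concentration result in the spirit of Theorem~\ref{PP theorem} yields, a.a.s., a cluster of $\Omega(n/m)$ good squares in $\Lambda_n$, and by construction the common crossings between neighbouring good squares merge the distinguished sub-clusters into a single connected component of $G_n$. Each good square contributes $\Omega(m)$ vertices to this component, so its total size is $\Omega_{\lambda,p}(n)$.

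The main obstacle is unmistakably the box-crossing estimate. The percolated random geometric graph admits no straightforward RSW machinery, so one cannot simply import the planar Bernoulli results; the argument must combine a quenched statement about the infinite cluster of $G_1$ (which furnishes geometric crossings) with the delicate local sprinkling step (which promotes them to graph-theoretic ones) while preserving the independence needed later in the renormalization. The remaining pieces — renormalization, LSS domination, and the concentration of the largest cluster — are by now a standard skeleton once this estimate is available.
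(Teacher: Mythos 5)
Your overall architecture (box-crossing estimate, then renormalization into good squares, domination via Theorem~\ref{thm LSS}, and a finite-volume concentration input in the spirit of Theorem~\ref{PP theorem}) is exactly the skeleton of the paper's proof, and the upper bound via Poisson concentration is fine. The gap is in the one step you yourself identify as the crux: the welding of a geometric crossing into a graph-theoretic one.

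Each individual weld — a sprinkled point of $P_2$ landing in a prescribed constant-area region near an intersection point \emph{and} opening two specific $p$-bonds — succeeds only with a probability $c=c(\delta,p)$ that is a fixed constant strictly less than $1$; you cannot push it towards $1$, since $\delta<\lambda-\lambda_c(p)$ and $p$ are fixed. To turn your geometric chain into a single path of $G$ you need \emph{every} weld along the chain to succeed, which happens with probability at most $c^{k-1}$ if the chain has $k$ pieces: a constant bounded away from $1$ (and exponentially small if, as you write, there are $O(m)$ intersection points). A union bound goes the wrong way here — it controls the probability that \emph{some} weld fails by the sum of the failure probabilities, each of which is the constant $1-c$, so the bound is vacuous. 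Consequently your box-crossing probability cannot be driven above the threshold $q_0$ of Theorem~\ref{thm LSS}, and the renormalization does not close.

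The paper's Step 4 of Proposition~\ref{lem box crossing} circumvents precisely this obstruction: instead of welding every intersection of one chain, it takes two genuine graph crossings of the two halves of the rectangle that both meet a common short interval around a point $x_0$, and surrounds $x_0$ by $L=\Theta(\log N)$ disjoint nested annuli each containing a cycle of $G(\lambda',p)$ (this requires the extra inputs of Steps 2 and 3, which your proposal omits). Both crossings must geometrically meet every such cycle, and it suffices that they both weld to the \emph{same} cycle for some single index $\ell$; since the $L$ attempts live in disjoint annuli they are independent, so the success probability is $1-(1-c^2)^{L}\to 1$. Replacing ``all welds must succeed'' by ``at least one of many independent welding opportunities must succeed'' is the missing idea, and without it (or a substitute such as an RSW-type quantitative statement for the percolated model, which is not available off the shelf) the crossing estimate, and hence the theorem, does not follow from your argument.
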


The proof of this theorem is divided into two main parts. Firstly, we show in Proposition~\ref{lem box crossing} that for any $\kappa > 0$ the event $\cH(\kappa R,R)$ holds a.a.s.\ as $R\to \infty$. The proof of this part is inspired by the proof of Corollary 4.2 in~\cite{ATT18} in the context of a closely related model but, since we perform bond percolation on top of the random geometric graph, two intersecting edges need not necessarily be part of the same connected 
component (as is the case for the random geometric graph without percolation). To circumvent this difficulty, we use a carefully designed sprinkling procedure, as was sketched in the introduction.

In the second part, we construct an auxiliary graph that dominates a supercritical Bernoulli bond percolation on $\mathbb Z^2$. Then, we rely 
on known results for supercritical bond percolation on finite boxes of $\mathbb Z^2$ and transfer them to the original graph by using that, by construction, any linear-sized subgraph of the auxiliary graph corresponds to a linear-sized subgraph of the original graph. 

We start with the first part. 

\begin{proposition}\label{lem box crossing}
Assume that $\lambda>0$ and $p\in (0,1]$ are such that 
$\lambda>\lambda_c(p)$. Then for every $\kappa > 0$, one has $\mathbb P_{\lambda,p}(\cH(\kappa R, R))\to 1$, as $R\to \infty$.
\end{proposition}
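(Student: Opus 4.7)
The plan is to follow the RSW-type strategy of Corollary 4.2 in \cite{ATT18}, combining supercriticality, ergodicity and the FKG inequality, with the classical planarity-based ``gluing'' step replaced by an explicit sprinkling procedure. The latter is made necessary by the fact that in our model two edges that cross geometrically need not share a graph vertex, because of the independent Bernoulli retention of edges.

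First I would fix $\delta\in(0,\lambda-\lambda_c(p))$ and decompose $\mathrm{Po}(\lambda)$ as the superposition of two independent Poisson processes $\mathrm{Po}(\lambda-\delta)\sqcup\mathrm{Po}(\delta)$. Performing Bernoulli bond percolation at density $p$ on $\mathrm{Po}(\lambda-\delta)$ yields a ``base'' graph $G^0\stackrel{d}{=}G(\lambda-\delta,p)$, still supercritical by the choice of $\delta$, while the extra Poisson$(\delta)$ points, together with the $p$-open edges they introduce, form the ``sprinkling'' $G^1$, so that $G=G^0\cup G^1$. In $G^0$ the infinite cluster is then unique and has positive density $\lambda\theta(\lambda-\delta,p)$.

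In the first main step, I would establish a uniform lower bound $\mathbb{P}(\cH(R,R))\ge c>0$ in $G^0$. The idea is that by ergodicity and uniqueness of the infinite cluster, for large $R$ both the left and right boundary strips of $[0,R]^2$ contain with high probability many vertices of the infinite cluster, and these are pairwise connected through that cluster; combining this with diagonal reflection symmetry, translation invariance and the FKG inequality (Lemma~\ref{lem harris}) yields the desired lower bound on the probability of a horizontal crossing of $[0,R]^2$ by a single component. In the second main step, I would cover the $\kappa R\times R$ rectangle with a chain of overlapping squares of side $R$ and apply the square-root trick~\eqref{eq square-root trick} together with FKG to deduce that a.a.s.\ each square is horizontally crossed in $G^0$ and each overlap region is vertically crossed. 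Rather than invoking planarity to link consecutive crossings, I would stitch them via sprinkling: at each intersection point $z$ of a horizontal edge and a vertical edge of $G^0$, with probability bounded away from $0$ independently of $G^0$, a point of $\mathrm{Po}(\delta)$ lands in a small disc around $z$ at distance at most $1$ from each of the four relevant endpoints, and all four new $p$-edges are open.

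The main obstacle is controlling the dependence between $G^0$ and the sprinkling, and ensuring simultaneous success of all the required gluings. I would handle this by revealing $G^0$ first, so that the list of intersection points to be glued is measurable with respect to $G^0$, and then noting that conditionally on $G^0$ the sprinkling is still an independent Poisson$(\delta)$ process; the gluing events at geometrically separated intersections are then conditionally independent, each has conditional probability bounded below by a positive constant, and a union bound over the finitely many intersections produced by the RSW construction shows that all gluings succeed with probability tending to $1$. Combining the two steps then produces a single horizontal crossing of the $\kappa R\times R$ rectangle in $G$ with probability tending to $1$, as required.
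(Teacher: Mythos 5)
There is a genuine gap, and it sits exactly at the point your sprinkling is supposed to resolve. In your final step you say that each gluing event at an intersection of two crossings of $G^0$ has conditional probability \emph{bounded below by a positive constant}, and that a union bound over the finitely many intersections then shows that \emph{all} gluings succeed with probability tending to $1$. This does not follow: if each gluing succeeds only with probability $c\in(0,1)$ bounded away from $1$ (which is all you can guarantee, since the sprinkling intensity $\delta$ and the retention probability $p$ are fixed, so the chance that a $\mathrm{Po}(\delta)$ point lands in the required small disc \emph{and} the four new edges are all $p$-open is a fixed constant), then each gluing \emph{fails} with probability up to $1-c$, and the probability that all of them succeed is at most $c^{\#\text{junctions}}$, not $1-o(1)$. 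Moreover, a horizontal and a vertical crossing of a square need only intersect geometrically \emph{once}, so a single junction offers only one gluing opportunity. The missing idea is to manufacture \emph{many} independent gluing opportunities at each junction. In the paper this is done by first using the square-root trick to pin down a deterministic point $x_0$ near which crossings of both adjacent rectangles pass with high probability, and then surrounding $x_0$ by $L\to\infty$ disjoint annuli, each of which contains a cycle of $G^0$ with probability bounded away from zero (Steps~2 and~3 of the paper's proof, which in turn rest on a result of Penrose and an FKG/local-connection construction). Any crossing reaching $x_0$ must geometrically meet every one of these cycles, so the probability that at least one of the $L$ independent gluings succeeds is $1-(1-c)^{2L}\to 1$, and only then does a union bound over the $O(\kappa)$ junctions close the argument.

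A secondary but also real gap is your first main step. The assertion $\mathbb P(\cH(R,R))\ge c>0$ is not justified by ``points of the infinite cluster in the two boundary strips are pairwise connected'': the connecting path lives in the infinite graph and may leave $[0,R]^2$ entirely. What ergodicity plus the square-root trick actually give cheaply is that a rectangle of aspect ratio $3$ is crossed \emph{along its shorter side} with probability tending to $1$ (the paper's Step~1); upgrading this to crossings of squares or of long rectangles is precisely what requires the gluing machinery above, together with the quantitative input that two distant boxes are connected \emph{within} a box of comparable scale (Penrose's Proposition~2.6, quoted as Step~2). As written, your Step~1 silently assumes a statement of essentially the same difficulty as the proposition being proved.
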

\begin{proof}
The proof has four steps. The first one is to prove the result 
for $\kappa = 1/3$. 
The second step uses a result of  Penrose~\cite{PenroseDraft} saying that 
with probability bounded away from zero, for some large enough constant $K$ and two squares of side length $K$ at distance $R$ from each other, there exist one vertex in the first square and another in the second square that are connected by a path that stays within a box of side length $4R$ containing its endpoints. Note that $K$ is a large but fixed constant, and that we let $R$ tend to infinity, so that $R\gg K$.
In a third step, we prove that with probability bounded away from zero there is a path
surrounding the box $\Lambda(R)$ inside $\Lambda(3R)$.
Finally, as a last step,
we prove that if the result holds for a given $\kappa > 0$, then it also holds
for all $\kappa'\in [\kappa, 2\kappa)$.

\vspace{0.2cm}

\noindent \underline{Step 1.} 
We show that $\Pr(\cH(R/3, R))\to 1$ as $R\to \infty$. Define the events
\begin{align*}
&\cA_R = \cH([R/6, R/2]\times [-R/2, R/2]), \quad \cB_R = \cH([-R/2, -R/6]\times [-R/2, R/2]),\\
&\cC_R = \cV([-R/2, R/2]\times [R/6, R/2]), \quad \cD_R = \cV([-R/2, R/2]\times [-R/2, R/6]).
\end{align*}

\begin{figure}
\centering
\begin{tikzpicture}[line cap=round,line join=round,x=1cm,y=1cm]
\clip(-5.6,-2.1) rectangle (1.02,4.1);
\draw [line width=1pt] (-5,4)-- (-5,-2);
\draw [line width=1pt] (-5,-2)-- (1,-2);
\draw [line width=1pt] (1,-2)-- (1,4);
\draw [line width=1pt] (1,4)-- (-5,4);

\draw [line width=1pt] (-3,2)-- (-3,0);

\draw [line width=1pt] (-3,0)-- (-1,0);
\draw [line width=0.2pt, dash pattern={on 4pt off 4pt}] (-5,0)-- (1,0);

\draw [line width=1pt] (-1,0)-- (-1,2);

\draw [line width=1pt] (-1,2)-- (-3,2);
\draw [line width=0.2pt, dash pattern={on 4pt off 4pt}] (1,2)-- (-5,2);

\draw[line width=1pt] (-0.999999844151115,1.1830241847380731) -- (-0.999999844151115,1.1830241847380731);
\draw[line width=1pt] (-0.999999844151115,1.1830241847380731) -- (-0.980449883500054,1.2651335395187024);
\draw[line width=1pt] (-0.980449883500054,1.2651335395187024) -- (-0.9608999228489931,1.3428711431597598);
\draw[line width=1pt] (-0.9608999228489931,1.3428711431597598) -- (-0.9413499621979321,1.4163548822072882);
\draw[line width=1pt] (-0.9413499621979321,1.4163548822072882) -- (-0.9218000015468711,1.4856850979166378);
\draw[line width=1pt] (-0.9218000015468711,1.4856850979166378) -- (-0.9022500408958102,1.5509461472578492);
\draw[line width=1pt] (-0.9022500408958102,1.5509461472578492) -- (-0.8827000802447492,1.6122078903946695);
\draw[line width=1pt] (-0.8827000802447492,1.6122078903946695) -- (-0.8631501195936883,1.6695271101372735);
\draw[line width=1pt] (-0.8631501195936883,1.6695271101372735) -- (-0.8436001589426273,1.722948868868751);
\draw[line width=1pt] (-0.8436001589426273,1.722948868868751) -- (-0.8240501982915663,1.772507808445427);
\draw[line width=1pt] (-0.8240501982915663,1.772507808445427) -- (-0.8045002376405054,1.8182293985710678);
\draw[line width=1pt] (-0.8045002376405054,1.8182293985710678) -- (-0.7849502769894444,1.860131139145027);
\draw[line width=1pt] (-0.7849502769894444,1.860131139145027) -- (-0.7654003163383835,1.8982237220843836);
\draw[line width=1pt] (-0.7654003163383835,1.8982237220843836) -- (-0.7458503556873225,1.9325121581201128);
\draw[line width=1pt] (-0.7458503556873225,1.9325121581201128) -- (-0.7263003950362615,1.962996874067339);
\draw[line width=1pt] (-0.7263003950362615,1.962996874067339) -- (-0.7067504343852006,1.9896747860697097);
\draw[line width=1pt] (-0.7067504343852006,1.9896747860697097) -- (-0.6872004737341396,2.0125403543179314);
\draw[line width=1pt] (-0.6872004737341396,2.0125403543179314) -- (-0.6676505130830787,2.031586624742503);
\draw[line width=1pt] (-0.6676505130830787,2.031586624742503) -- (-0.6481005524320177,2.046806263180689);
\draw[line width=1pt] (-0.6481005524320177,2.046806263180689) -- (-0.6285505917809567,2.0581925875177634);
\draw[line width=1pt] (-0.6285505917809567,2.0581925875177634) -- (-0.6090006311298958,2.0657406033025665);
\draw[line width=1pt] (-0.6090006311298958,2.0657406033025665) -- (-0.5894506704788348,2.0694480483374083);
\draw[line width=1pt] (-0.5894506704788348,2.0694480483374083) -- (-0.5699007098277739,2.0693164517423552);
\draw[line width=1pt] (-0.5699007098277739,2.0693164517423552) -- (-0.5503507491767129,2.065352212993941);
\draw[line width=1pt] (-0.5503507491767129,2.065352212993941) -- (-0.5308007885256519,2.057567706438344);
\draw[line width=1pt] (-0.5308007885256519,2.057567706438344) -- (-0.511250827874591,2.045982416779065);
\draw[line width=1pt] (-0.511250827874591,2.045982416779065) -- (-0.49170086722353,2.030624111039159);
\draw[line width=1pt] (-0.49170086722353,2.030624111039159) -- (-0.47215090657246905,2.011530052498066);
\draw[line width=1pt] (-0.47215090657246905,2.011530052498066) -- (-0.4526009459214081,1.9887482621030825);
\draw[line width=1pt] (-0.4526009459214081,1.9887482621030825) -- (-0.43305098527034713,1.9623388328555427);
\draw[line width=1pt] (-0.43305098527034713,1.9623388328555427) -- (-0.4135010246192862,1.9323753026717556);
\draw[line width=1pt] (-0.4135010246192862,1.9323753026717556) -- (-0.3939510639682252,1.8989460912187637);
\draw[line width=1pt] (-0.3939510639682252,1.8989460912187637) -- (-0.37440110331716425,1.862156006224993);
\draw[line width=1pt] (-0.37440110331716425,1.862156006224993) -- (-0.3548511426661033,1.8221278247658628);
\draw[line width=1pt] (-0.3548511426661033,1.8221278247658628) -- (-0.33530118201504233,1.7790039550244363);
\draw[line width=1pt] (-0.33530118201504233,1.7790039550244363) -- (-0.31575122136398137,1.7329481840271956);
\draw[line width=1pt] (-0.31575122136398137,1.7329481840271956) -- (-0.2962012607129204,1.6841475168550277);
\draw[line width=1pt] (-0.2962012607129204,1.6841475168550277) -- (-0.27665130006185945,1.6328141128295262);
\draw[line width=1pt] (-0.27665130006185945,1.6328141128295262) -- (-0.2571013394107985,1.5791873241747052);
\draw[line width=1pt] (-0.2571013394107985,1.5791873241747052) -- (-0.23755137875973753,1.5235358426542438);
\draw[line width=1pt] (-0.23755137875973753,1.5235358426542438) -- (-0.21800141810867657,1.4661599596843766);
\draw[line width=1pt] (-0.21800141810867657,1.4661599596843766) -- (-0.1984514574576156,1.4073939454225624);
\draw[line width=1pt] (-0.1984514574576156,1.4073939454225624) -- (-0.17890149680655465,1.3476085523320647);
\draw[line width=1pt] (-0.17890149680655465,1.3476085523320647) -- (-0.1593515361554937,1.2872136487225945);
\draw[line width=1pt] (-0.1593515361554937,1.2872136487225945) -- (-0.13980157550443273,1.2266609877671666);
\draw[line width=1pt] (-0.13980157550443273,1.2266609877671666) -- (-0.12025161485337175,1.166447117495339);
\draw[line width=1pt] (-0.12025161485337175,1.166447117495339) -- (-0.10070165420231078,1.1071164372630085);
\draw[line width=1pt] (-0.10070165420231078,1.1071164372630085) -- (-0.0811516935512498,1.0492644061989524);
\draw[line width=1pt] (-0.0811516935512498,1.0492644061989524) -- (-0.06160173290018883,0.9935409091283094);
\draw[line width=1pt] (-0.06160173290018883,0.9935409091283094) -- (-0.04205177224912786,0.9406537854732133);
\draw[line width=1pt] (-0.04205177224912786,0.9406537854732133) -- (-0.022501811598066884,0.8913725266307961);
\draw[line width=1pt] (-0.022501811598066884,0.8913725266307961) -- (-0.00295185094700591,0.8465321473287966);
\draw[line width=1pt] (-0.00295185094700591,0.8465321473287966) -- (0.016598109704055064,0.8070372364590137);
\draw[line width=1pt] (0.016598109704055064,0.8070372364590137) -- (0.03614807035511604,0.7738661928888726);
\draw[line width=1pt] (0.03614807035511604,0.7738661928888726) -- (0.05569803100617701,0.7480756517513605);
\draw[line width=1pt] (0.05569803100617701,0.7480756517513605) -- (0.07524799165723799,0.7308051067136319);
\draw[line width=1pt] (0.07524799165723799,0.7308051067136319) -- (0.09479795230829896,0.7232817337245627);
\draw[line width=1pt] (0.09479795230829896,0.7232817337245627) -- (0.11434791295935993,0.726825421741585);
\draw[line width=1pt] (0.11434791295935993,0.726825421741585) -- (0.1338978736104209,0.7428540159371115);
\draw[line width=1pt] (0.1338978736104209,0.7428540159371115) -- (0.1534478342614819,0.7728887788848988);
\draw[line width=1pt] (0.1534478342614819,0.7728887788848988) -- (0.17299779491254286,0.8185600752267088);
\draw[line width=1pt] (0.17299779491254286,0.8185600752267088) -- (0.19254775556360382,0.8816132853196263);
\draw[line width=1pt] (0.19254775556360382,0.8816132853196263) -- (0.21209771621466478,0.9639149533644327);
\draw[line width=1pt] (0.21209771621466478,0.9639149533644327) -- (0.23164767686572574,1.0674591755154283);
\draw[line width=1pt] (0.23164767686572574,1.0674591755154283) -- (0.2511976375167867,1.194374233472129);
\draw[line width=1pt] (0.2511976375167867,1.194374233472129) -- (0.27074759816784766,1.3469294790532695);
\draw[line width=1pt] (0.27074759816784766,1.3469294790532695) -- (0.2902975588189086,1.527542475253549);
\draw[line width=1pt] (0.2902975588189086,1.527542475253549) -- (0.3098475194699696,1.7387863992836272);
\draw[line width=1pt] (0.3098475194699696,1.7387863992836272) -- (0.32939748012103054,1.9833977130938054);
\draw[line width=1pt] (0.32939748012103054,1.9833977130938054) -- (0.3489474407720915,2.264284106881933);
\draw[line width=1pt] (0.3489474407720915,2.264284106881933) -- (0.36849740142315246,2.5845327210860427);
\draw[line width=1pt] (0.36849740142315246,2.5845327210860427) -- (0.3880473620742134,2.9474186523622556);
\draw[line width=1pt] (0.3880473620742134,2.9474186523622556) -- (0.4075973227252744,3.3564137490485173);
\draw[line width=1pt] (0.4075973227252744,3.3564137490485173) -- (0.42714728337633534,3.8151957016147287);
\draw[line width=1pt] (0.42714728337633534,3.8151957016147287) -- (0.4466972440273963,4.327657433599871);
\draw[line width=1pt] (0.4466972440273963,4.327657433599871) -- (0.46624720467845726,4.897916798536738);

\draw [fill=black] (-2,1) circle (0.8pt);
\draw [fill=black] (-2.15,1) node {$\bo$};

\draw [decorate,decoration={brace,amplitude=6pt},xshift=0pt,yshift=0pt]
(-3,0) -- (-3,2) node {};
\draw [fill=black] (-3.4,1) node {$\tfrac{R}{3}$};

\draw [decorate,decoration={brace,amplitude=6pt},xshift=0pt,yshift=0pt]
(-5,-2) -- (-5,4) node {};
\draw [fill=black] (-5.4,1) node {$R$};

\end{tikzpicture}
\caption{Connecting the box $\Lambda(R/6)$ to the box $\Lambda(R/2)$: in this case, the rectangles associated to the events $\cC_R$ and $\cD_R$ are separated from the rest of the $R \times R$ square by dashed lines, and the path traverses vertically the rectangle on top, so the event $\cC_R$ is realized (the rotated rectangles associated to the events $\cA_R$ and $\cB_R$ are not explicitly separated).}
\label{fig 1}
\end{figure}
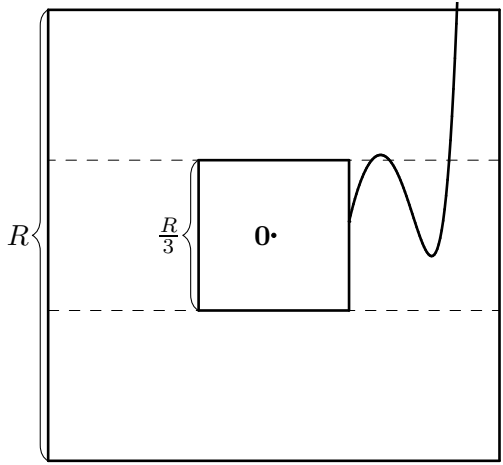

\noindent
Note that all four events correspond to crossing a rectangle with aspect ratio $3$ along its shorter side. Then, connecting the box $\Lambda(R/6)$ to the right (respectively the left, the top, or the bottom) side of $\Lambda(R/2)$ ensures that the event $\cA_R$ (respectively $\cB_R$, $\cC_R$, or $\cD_R$) is realized, see Figure~\ref{fig 1}.

Thus, the square-root trick~\eqref{eq square-root trick} and the fact that all four events have probability $\Pr(\cH(R/3,R))$ implies that
\begin{equation*}
\Pr(\cH(R/3,R))\ge 1 - (1-\Pr(\cA_R\cup \cB_R\cup \cC_R\cup \cD_R))^{1/4}\ge 1 - (1-\Pr(\Lambda(R/6)\leftrightarrow \mathbb R^2\setminus \Lambda(R/2)))^{1/4}.
\end{equation*}
Since by definition $G(\lambda,p)$ contains an infinite component almost surely, the probability that the box $\Lambda(R/6)$ intersects the infinite component tends to 1 as $R\to \infty$, which finishes the proof of Step 1.

\vspace{0.2cm}

\noindent \underline{Step 2.} For every sufficiently large constant $K>0$, one has
$$\liminf_{R\to \infty} \mathbb P\big(\Lambda(K)\xleftrightarrow{\Lambda(R)} \Lambda_{(R/2,0)}(K) \big) > 0.$$ 
This result is proved in \cite{PenroseDraft}, see the 
proof of Proposition 2.6 therein, and we refer to this paper for details.  
Let us stress that, in particular, both Steps 1 and 2 use the hypothesis $\lambda>\lambda_c(p)$. 

\vspace{0.2cm}

For the third step, we need a couple of new definitions. For $x\in \mathbb R^2$ and $r,R > 0$ satisfying
$0<r<R$, we define the annulus 
$$A_x(r,R) :=\Lambda_x(R)\setminus \Lambda_x(r),$$ 
and the event 
$$\mathrm{Circ}_x(r,R):=\{A_x(r,R) \text{ contains a cycle of }G\}.$$ 
We also simply write $A(r,R)$ and $\text{Circ}(r,R)$ for $A_{\bo}(r,R)$ and $\text{Circ}_{\bo}(r,R)$, respectively. In the next step, we prove that large annuli with constant ratio of their radii contain a cycle of $G$ with probability bounded away from zero.

\vspace{0.2cm}

\noindent \underline{Step 3.} We show that $\liminf_{R\to \infty} \Pr(\mathrm{Circ}(R, 3R)) > 0$.
Set $r=R/2$. For $i\in \{1,\dots,8\}$, let 
$$v_i = (ir-2R,2R), \quad v_{i+8}= (2R,2R-ir), \quad v_{i+16}=(2R-ir,-2R), \quad v_{i+24} = (-2R,-2R+ir).$$
Note that the points $(v_i)_{i=1}^{32}$ divide $\partial \Lambda(2R)$ into 32 equal segments of length $r$. By Step 2 we know that for every sufficiently large $K$ and $R = R(K)$ and for every $i\in \{1,\dots,32\}$, we have
\begin{equation}\label{eq incr events}
\mathbb P\left(\Lambda_{v_i}(K) \xleftrightarrow{\Lambda_{v_i}(R)} \Lambda_{v_{i+1}}(K)\right) \ge \frac 12,  
\end{equation}
where $v_{33} = v_1$, see Figure~\ref{fig:additional}.

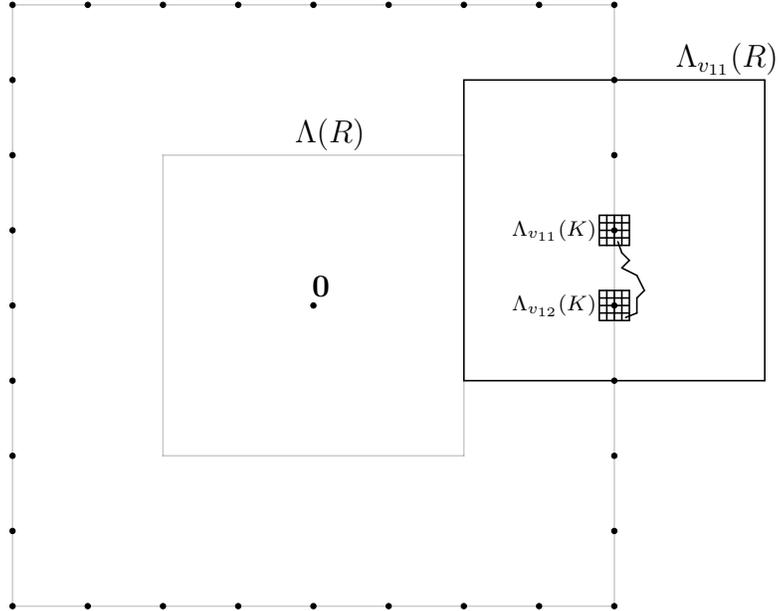
\begin{figure}
\centering
\begin{tikzpicture}[line cap=round,line join=round,x=1cm,y=1cm]
\clip(-7.092902004746363,-4.1) rectangle (12.42795927028891,4.1);
\draw [line width=0.6pt, opacity = 0.2] (0,2)-- (4,2);
\draw [line width=0.6pt, opacity = 0.2] (4,2)-- (4,-2);
\draw [line width=0.6pt, opacity = 0.2] (4,-2)-- (0,-2);
\draw [line width=0.6pt, opacity = 0.2] (0,2)-- (0,-2);
\draw [line width=0.6pt, opacity = 0.2] (-2,4)-- (6,4);
\draw [line width=0.6pt, opacity = 0.2] (6,4)-- (6,-4);
\draw [line width=0.6pt, opacity = 0.2] (6,-4)-- (-2,-4);
\draw [line width=0.6pt, opacity = 0.2] (-2,-4)-- (-2,4);
\draw [line width=0.6pt] (6,1.2)-- (6,0.8);
\draw [line width=0.6pt] (5.8,1.2)-- (5.8,0.8);
\draw [line width=0.6pt] (5.8,0.8)-- (6.2,0.8);
\draw [line width=0.6pt] (6.2,0.8)-- (6.2,1.2);
\draw [line width=0.6pt] (6.2,1.2)-- (5.8,1.2);
\draw [line width=0.6pt] (5.9,1.2)-- (5.9,0.8);
\draw [line width=0.6pt] (6.1,1.2)-- (6.1,0.8);
\draw [line width=0.6pt] (5.8,1.1)-- (6.2,1.1);
\draw [line width=0.6pt] (5.8,0.9)-- (6.2,0.9);
\draw [line width=0.6pt] (5.8,1)-- (6.2,1);
\draw [line width=0.6pt] (5.8,0.2)-- (6.2,0.2);
\draw [line width=0.6pt] (6,0.2)-- (6,-0.2);
\draw [line width=0.6pt] (6.2,0.2)-- (6.2,-0.2);
\draw [line width=0.6pt] (6.2,-0.2)-- (5.8,-0.2);
\draw [line width=0.6pt] (5.8,-0.2)-- (5.8,0.2);
\draw [line width=0.6pt] (5.9,0.2)-- (5.9,-0.2);
\draw [line width=0.6pt] (6.1,-0.2)-- (6.1,0.2);
\draw [line width=0.6pt] (5.8,0.1)-- (6.2,0.1);
\draw [line width=0.6pt] (5.8,0)-- (6.2,0);
\draw [line width=0.6pt] (5.8,-0.1)-- (6.2,-0.1);
\draw [line width=0.6pt] (6.047892216100419,0.8484229434135543)-- (6.1,0.7);
\draw [line width=0.6pt] (6.1,0.7)-- (6.2,0.6);
\draw [line width=0.6pt] (6.2,0.6)-- (6.1,0.5);
\draw [line width=0.6pt] (6.1,0.5)-- (6.3,0.4);
\draw [line width=0.6pt] (6.3,0.4)-- (6.4,0.2);
\draw [line width=0.6pt] (6.4,0.2)-- (6.3,0.1);
\draw [line width=0.6pt] (6.3,0.1)-- (6.3,-0.1);
\draw [line width=0.6pt] (6.3,-0.1)-- (6.150185636789366,-0.15935668263309644);
\draw [line width=0.6pt] (4,3)-- (8,3);
\draw [line width=0.6pt] (8,3)-- (8,-1);
\draw [line width=0.6pt] (8,-1)-- (4,-1);
\draw [line width=0.6pt] (4,-1)-- (4,3);
\begin{scriptsize}
\draw[color=black] (2.220388485764576,2.2723845445134128) node {\large{$\Lambda(R)$}};
\draw [fill=black] (-2,4) circle (1pt);
\draw [fill=black] (6,4) circle (1pt);
\draw [fill=black] (6,-4) circle (1pt);
\draw [fill=black] (-2,-4) circle (1pt);
\draw [fill=black] (-1,4) circle (1pt);
\draw [fill=black] (0,4) circle (1pt);
\draw [fill=black] (1,4) circle (1pt);
\draw [fill=black] (2,4) circle (1pt);
\draw [fill=black] (3,4) circle (1pt);
\draw [fill=black] (4,4) circle (1pt);
\draw [fill=black] (5,4) circle (1pt);
\draw [fill=black] (6,3) circle (1pt);
\draw [fill=black] (6,2) circle (1pt);
\draw [fill=black] (6,1) circle (1pt);
\draw [fill=black] (6,0) circle (1pt);
\draw [fill=black] (6,-1) circle (1pt);
\draw [fill=black] (6,-2) circle (1pt);
\draw [fill=black] (6,-3) circle (1pt);
\draw [fill=black] (5,-4) circle (1pt);
\draw [fill=black] (4,-4) circle (1pt);
\draw [fill=black] (3,-4) circle (1pt);
\draw [fill=black] (2,-4) circle (1pt);
\draw [fill=black] (1,-4) circle (1pt);
\draw [fill=black] (0,-4) circle (1pt);
\draw [fill=black] (-1,-4) circle (1pt);
\draw [fill=black] (-2,-3) circle (1pt);
\draw [fill=black] (-2,-2) circle (1pt);
\draw [fill=black] (-2,-1) circle (1pt);
\draw [fill=black] (-2,0) circle (1pt);
\draw [fill=black] (-2,1) circle (1pt);
\draw [fill=black] (-2,2) circle (1pt);
\draw [fill=black] (-2,3) circle (1pt);
\draw[color=black] (5.2,1) node {$\Lambda_{v_{11}}(K)$};
\draw[color=black] (5.2,0) node {$\Lambda_{v_{12}}(K)$};
\draw[color=black] (7.5,3.276092867928145) node {\large{$\Lambda_{v_{11}}(R)$}};
\draw [fill=black] (2,0) circle (1pt);
\draw[color=black] (2.1,0.25) node {\large{$\mathbf{0}$}};
\end{scriptsize}
\end{tikzpicture}
\caption{An illustration of the event described in~\eqref{eq incr events} for $i = 11$.}
\label{fig:additional}
\end{figure}

Since each of the events in \eqref{eq incr events} is increasing, we conclude by the FKG inequality (Lemma~\ref{lem harris}) that their intersection, which we call $\mathcal A$, has probability at least $2^{-32}$. Now, for every $i\in \{1,\dots,32\}$, fix a tessellation $\cT_i$ of the box $\Lambda_{v_i}(K)$ into squares of side length $1/\sqrt{5}$ (this is possible by choosing $K$ to be an integer multiple of $\sqrt{5}$) and let
$$\cB = \{Q\cap \mathrm{Po}(\lambda)\neq \emptyset, \ \text{for all }Q\in \cT_i \text{ and all } 1\le i\le 32\}.$$
\noindent
Since $\mathcal A$ and $\mathcal B$ are both increasing events, by the FKG inequality there is $\rho>0$ such that $\mathbb P(\mathcal A\cap \mathcal B)>\rho$, for all sufficiently large $R$. Also, choose $M > 0$ so that the event
\begin{equation*}
\cC = \{|\Lambda_{v_i}(K)\cap \mathrm{Po}(\lambda)|\le M, \text{ for all } 1\le i\le 32\}
\end{equation*}
holds with probability at least $1 - \rho/2$. Then, in particular, $\mathbb P(\mathcal A \cap \mathcal B\cap \mathcal C)\ge \rho/2$.

Now, observe that all three events $\mathcal A$, $\mathcal B$ and $\mathcal C$ are independent of the state of the edges inside the squares $(\Lambda_{v_i}(K))_{i=1}^{32}$. Indeed, $\cB$ and $\cC$ depend only on $\mathrm{Po}(\lambda)$, and $\mathcal A$ depends on $\mathrm{Po}(\lambda)$ and the state of the edges with at least one endpoint outside these squares. Thus, calling $\mathcal D$ the event that, for all $i\in \{1,\dots,32\}$, the vertices $\Lambda_{v_i}(K)\cap \mathrm{Po}(\lambda)$ induce a single connected component from $G(\lambda, p)$, we get $\mathbb P(\cD\mid \cA\cap \cB\cap \cC)\ge p^{32M^2}$. Hence, since $\mathcal A\cap \mathcal B\cap \mathcal C\cap \mathcal D\subseteq \mathrm{Circ}(R, 3R)$, we get
$$\mathbb P(\mathrm{Circ}(R, 3R))\ge \mathbb P(\mathcal A\cap \mathcal B\cap \mathcal C\cap \mathcal D)\ge \mathbb P(\mathcal A\cap \mathcal B\cap \mathcal C) \mathbb P(\mathcal D\mid \mathcal A\cap \mathcal B\cap \mathcal C)\ge \frac{\rho p^{32 M^2}}{2},$$
which finishes the proof of Step 3.

\vspace{0.2cm}

For the last step, we use a sprinkling argument. 

\vspace{0.2cm}

\noindent \underline{Step 4.} If $\mathbb P_{\lambda',p}(\mathcal H(\kappa R,R))\to 1$ as $R\to \infty$ for some fixed positive parameters $\kappa$, $\lambda'$ and $p$, then for any $\lambda>\lambda'$ one also has $\mathbb P_{\lambda,p}(\mathcal H(2\kappa R,R))\to 1$. 

\vspace{0.2cm}

Suppose that the statement holds for some  given $\kappa$, $\lambda'$ and $p$, and 
consider the rectangles 
\begin{equation*}
\Pi  = [0, 2\kappa R]\times [0,R], \quad 
\Pi_1  = [0, \kappa R]\times [0,R],  \quad  \Pi_2  = [\kappa R, 2\kappa R]\times [0,R].
\end{equation*}
In particular, $\Pi_1$ and $\Pi_2$ are respectively the left and right halves of $\Pi$. Then, divide the segment between the points $(\kappa R, 0)$ and $(\kappa R, R)$ into $N$ segments $I_1,\dots,I_N$, of equal length, where $N\in \mathbb N$ will be chosen appropriately later. Also, for any $i\in \{1,\dots,N\}$ and $j\in \{1,2\}$, let $\cB_{i,j}$ be the event that $\Pi_j$ contains a horizontal crossing which intersects $I_i$. For any fixed $N$, using our assumption and the square-root trick~\eqref{eq square-root trick},
we have for both $j\in \{1,2\}$ that
\begin{equation*}
\max_{1\le i\le N}\mathbb P_{\lambda',p}(\cB_{i,j})\ge 1 - (1 - \mathbb P_{\lambda',p}(\cH(\kappa R, R)))^{1/N},
\end{equation*}
which tends to 1 as $R\to \infty$. Denote by $i_0$ the smallest index realizing the maximum above (it is the same for both $j\in \{1,2\}$ by symmetry of $\Pi_1$ and $\Pi_2$), and let $x_0$ be the midpoint of the interval $I_{i_0}$.

By Step 3, there exist $R_0$ and $\delta > 0$ such that for every $R\ge R_0$, $\mathbb P_{\lambda',p}(\mathrm{Circ}(R, 3R))\ge \delta$. Assume now that $R/N \ge R_0$ and define 
$$K := \big\lfloor\log_4\big(\min(\kappa,1/2) N\big)\big\rfloor-1,\quad \text{and}\quad D:=R/N.$$ 
For every $k\in \{0,\dots,K\}$, define further the annulus $A_k := A_{x_0}(4^k D, 3\cdot 4^k D)$ and note that, by construction, any horizontal crossing of $\Pi_1$ or $\Pi_2$ that intersects $I_{i_0}$ also crosses each of these annuli. 
Denote by $\cD_k$ the event that there is 
a cycle inside the annulus $A_k$. Recall that by Step 3 and our choice of constants, each of these events has probability at least $\delta>0$ under $\mathbb P_{\lambda',p}$. Let $\cD:=\{\sum_{k=0}^K \mathds{1}_{\cD_k} \ge \delta K/2\}$ denote the event that in at least $\delta K/2$ many annuli $A_k$ there exists a cycle. Since $(\mathds{1}_{\cD_k})_{k=0}^K$ dominates an independent family of $K+1$ Bernoulli random variables with parameter $\delta$, Chernoff's inequality implies that $\cD$ holds with probability at least $1-\exp(-\Omega(\delta K))$. In particular, for every $\eps > 0$ and for every sufficiently large $R$ (allowing for $N$ and $K$ to be sufficiently large as well), the event $\cB_{i_0, 1}\cap \cB_{i_0, 2}\cap \cD$ holds with probability at least $1-\eps$.

Now, we first sample the graph $G(\lambda', p)$, and perform a sprinkling to connect the crossings of $\Pi_1$ and $\Pi_2$ intersecting $I_{i_0}$ to one of the cycles around $x_0$ on the event $\cB_{i_0, 1}\cap \cB_{i_0, 2}\cap \cD$. For this, fix some $\lambda>\lambda'$ and recall that by some well-known properties of Poisson Point Processes one may construct the graph $G(\lambda,p)$ by first sampling independent copies of 
$G(\lambda',p)$ and $G(\lambda-\lambda',p)$, and then adding independently an edge between any pair of vertices $v\in G(\lambda',p)$ and $v'\in G(\lambda-\lambda',p)$ satisfying $\|v-v'\|\le 1$ with probability $p$.

\begin{figure}
\centering
\begin{tikzpicture}[scale=1.3,line cap=round,line join=round,x=1cm,y=1cm]
\clip(-5,-6.198255010134753) rectangle (4.729938892772945,-0.7159507485854189);
\draw [line width=1.2pt] (4,-5)-- (-4,-5);
\draw [line width=0.4pt] (0,-6.198255010134753) -- (0,-0.7159507485854189);
\draw [line width=0.8pt] (-0.3,-3.2)-- (0.3,-3.2);
\draw [line width=0.8pt] (0.3,-3.2)-- (0.3,-3.8);
\draw [line width=0.8pt] (0.3,-3.8)-- (-0.3,-3.8);
\draw [line width=0.8pt] (-0.3,-3.8)-- (-0.3,-3.2);
\draw [line width=0.8pt] (-0.8,-2.7)-- (0.8,-2.7);
\draw [line width=0.8pt] (0.8,-2.7)-- (0.8,-4.3);
\draw [line width=0.8pt] (0.8,-4.3)-- (-0.8,-4.3);
\draw [line width=0.8pt] (-0.8,-4.3)-- (-0.8,-2.7);
\draw [line width=0.8pt] (-1,-2.5)-- (1,-2.5);
\draw [line width=0.8pt] (1,-2.5)-- (1,-4.5);
\draw [line width=0.8pt] (1,-4.5)-- (-1,-4.5);
\draw [line width=0.8pt] (-1,-4.5)-- (-1,-2.5);
\draw [line width=0.8pt] (-2.5,-6)-- (2.5,-6);
\draw [line width=0.8pt] (-2.5,-6)-- (-2.5,-1);
\draw [line width=0.8pt] (-2.5,-1)-- (2.5,-1);
\draw [line width=0.8pt] (2.5,-1)-- (2.5,-6);

\draw [line width=1.6pt] (-0.522936319660858,-3.1804983700246305)-- (-0.3497788519411222,-2.916639371594555);
\draw [line width=1.6pt] (-0.3497788519411222,-2.916639371594555)-- (-0.10241104091292813,-3.081551245613352);
\draw [line width=1.6pt] (-0.10241104091292813,-3.081551245613352)-- (0.18618473861996493,-3.0733056519124125);
\draw [line width=1.6pt] (0.18618473861996493,-3.0733056519124125)-- (0.4665349244519182,-3.023832089706773);
\draw [line width=1.6pt] (0.4665349244519182,-3.023832089706773)-- (0.4170613622462794,-3.378392618847187);
\draw [line width=1.6pt] (0.4170613622462794,-3.378392618847187)-- (0.6067100173678949,-3.5927780550716237);
\draw [line width=1.6pt] (0.6067100173678949,-3.5927780550716237)-- (0.5077628929566173,-3.881373834604519);
\draw [line width=1.6pt] (0.5077628929566173,-3.881373834604519)-- (0.26864067562936295,-4.087513677128015);
\draw [line width=1.6pt] (0.26864067562936295,-4.087513677128015)-- (0.0542552394049281,-3.939092990511098);
\draw [line width=1.6pt] (0.0542552394049281,-3.939092990511098)-- (-0.23434054012796496,-4.095759270828955);
\draw [line width=1.6pt] (-0.23434054012796496,-4.095759270828955)-- (-0.3910068204458212,-3.7824267101932407);
\draw [line width=1.6pt] (-0.3910068204458212,-3.7824267101932407)-- (-0.6218834440721356,-3.576286867669744);
\draw [line width=1.6pt] (-0.6218834440721356,-3.576286867669744)-- (-0.522936319660858,-3.1804983700246305);
\draw [line width=1.6pt] (-1.7515297811008885,-5.126458483446439)-- (-1.8257401244093467,-4.664705236193806);
\draw [line width=1.6pt] (-1.8257401244093467,-4.664705236193806)-- (-1.520653157474574,-4.145232833034594);
\draw [line width=1.6pt] (-1.520653157474574,-4.145232833034594)-- (-1.7432841873999487,-3.8071634912960604);
\draw [line width=1.6pt] (-1.7432841873999487,-3.8071634912960604)-- (-1.4299516267642363,-3.5927780550716237);
\draw [line width=1.6pt] (-1.4299516267642363,-3.5927780550716237)-- (-1.7845121559046477,-3.155761588921811);
\draw [line width=1.6pt] (-1.7845121559046477,-3.155761588921811)-- (-2.1060903102413,-2.9496217463983143);
\draw [line width=1.6pt] (-2.1060903102413,-2.9496217463983143)-- (-1.7845121559046477,-2.545587655052261);
\draw [line width=1.6pt] (-1.7845121559046477,-2.545587655052261)-- (-1.7103018125961895,-2.1168167826033883);
\draw [line width=1.6pt] (-1.7103018125961895,-2.1168167826033883)-- (-1.1825838157360422,-2.1662903448090276);
\draw [line width=1.6pt] (-1.1825838157360422,-2.1662903448090276)-- (-0.7950409117918714,-1.960150502285531);
\draw [line width=1.6pt] (-0.7950409117918714,-1.960150502285531)-- (-0.3085508834364232,-2.149799157407148);
\draw [line width=1.6pt] (-0.3085508834364232,-2.149799157407148)-- (-0.16837579052044654,-1.8117298156686135);
\draw [line width=1.6pt] (-0.16837579052044654,-1.8117298156686135)-- (0.6232012047697745,-2.075588814098689);
\draw [line width=1.6pt] (0.6232012047697745,-2.075588814098689)-- (0.8293410472932695,-1.9683960959864708);
\draw [line width=1.6pt] (0.8293410472932695,-1.9683960959864708)-- (1.3158310756487177,-2.355938999930644);
\draw [line width=1.6pt] (1.3158310756487177,-2.355938999930644)-- (1.7363563543966476,-2.4219037495381635);
\draw [line width=1.6pt] (1.7363563543966476,-2.4219037495381635)-- (1.6044268551816108,-2.9331305589964347);
\draw [line width=1.6pt] (1.6044268551816108,-2.9331305589964347)-- (1.8600402599107446,-3.155761588921811);
\draw [line width=1.6pt] (1.8600402599107446,-3.155761588921811)-- (1.678637198490069,-3.5927780550716237);
\draw [line width=1.6pt] (1.678637198490069,-3.5927780550716237)-- (1.8353034788079252,-3.881373834604519);
\draw [line width=1.6pt] (1.8353034788079252,-3.881373834604519)-- (1.5879356677797312,-4.268916738548693);
\draw [line width=1.6pt] (1.5879356677797312,-4.268916738548693)-- (1.8188122914060456,-4.45031979996937);
\draw [line width=1.6pt] (1.8188122914060456,-4.45031979996937)-- (1.5879356677797312,-4.7718979543060245);
\draw [line width=1.6pt] (1.5879356677797312,-4.7718979543060245)-- (1.266357513443079,-4.763652360605085);
\draw [line width=1.6pt] (1.266357513443079,-4.763652360605085)-- (1.0849544520224033,-5.1511952645492585);
\draw [line width=0.8pt, opacity=0.2] (1.0849544520224033,-5.1511952645492585)-- (0.8128498598913899,-5.225405607857717);
\draw [line width=0.8pt, opacity=0.2] (0.8128498598913899,-5.225405607857717)-- (0.7468851102838715,-5.62943969920377);
\draw [line width=0.8pt, opacity=0.2] (0.7468851102838715,-5.62943969920377)-- (0.38407898744252017,-5.7118956362131685);
\draw [line width=0.8pt, opacity=0.2] (0.38407898744252017,-5.7118956362131685)-- (0.16144795751714552,-5.316107138568055);
\draw [line width=0.8pt, opacity=0.2] (0.16144795751714552,-5.316107138568055)-- (-0.23434054012796496,-5.283124763764296);
\draw [line width=0.8pt, opacity=0.2] (-0.23434054012796496,-5.283124763764296)-- (-0.6301290377730754,-5.678913261409409);
\draw [line width=0.8pt, opacity=0.2] (-0.6301290377730754,-5.678913261409409)-- (-0.9434615984087878,-5.217160014156777);
\draw [line width=0.8pt, opacity=0.2] (-0.9434615984087878,-5.217160014156777)-- (-1.0836366913247646,-4.804880329109784);
\draw [line width=0.8pt, opacity=0.2] (-1.0836366913247646,-4.804880329109784)-- (-1.4711795952689353,-4.796634735408844);
\draw [line width=0.8pt, opacity=0.2] (-1.4711795952689353,-4.796634735408844)-- (-1.7515297811008885,-5.126458483446439);
\begin{scriptsize}
\draw [fill=black] (0,-3.5) circle (0.8pt);
\draw [fill=black] (-1.7515297811008885,-5.126458483446439) circle (0.8pt);
\draw [fill=black] (1.0849544520224033,-5.1511952645492585) circle (0.8pt);

\draw[color=black] (0.6,-2.9) node {\large{$\gamma_{k_\ell}'$}};
\draw[color=black] (2,-2.9) node {\large{$\gamma_{k_{\ell+1}}'$}};

\draw[color=black] (0.15,-3.6) node {\scriptsize{$x_0$}};

\draw[color=black] (-0.55,-4.1) node {\large{$A_{k_\ell}$}};
\draw[color=black] (-2.15,-5.8) node {\large{$A_{k_{\ell+1}}$}};

\draw[color=black] (-3.5,-4.75) node {\large{$\Pi$}};

\end{scriptsize}
\end{tikzpicture}
\caption{In the figure, the central vertex is $x_0$; around it, there are two cycles, $\gamma_{k_\ell}$ and $\gamma_{k_{\ell+1}}$, positioned in the annuli $A_{k_\ell}$ and $A_{k_{\ell+1}}$. Note that $\gamma_{k_\ell}' = \gamma_{k_\ell}$ but $\gamma_{k_{\ell+1}}'\subsetneq \gamma_{k_{\ell+1}}$ ($\gamma_{k_\ell}'$ and $\gamma_{k_{\ell+1}}'$ are thickened while $\gamma_{k_{\ell+1}}\setminus \gamma_{k_{\ell+1}}'$ is transparent). The proportions of the side lengths of the boxes, centered at $x_0$, are not the real ones in the above schematic representation.}
\label{fig 5}
\end{figure}
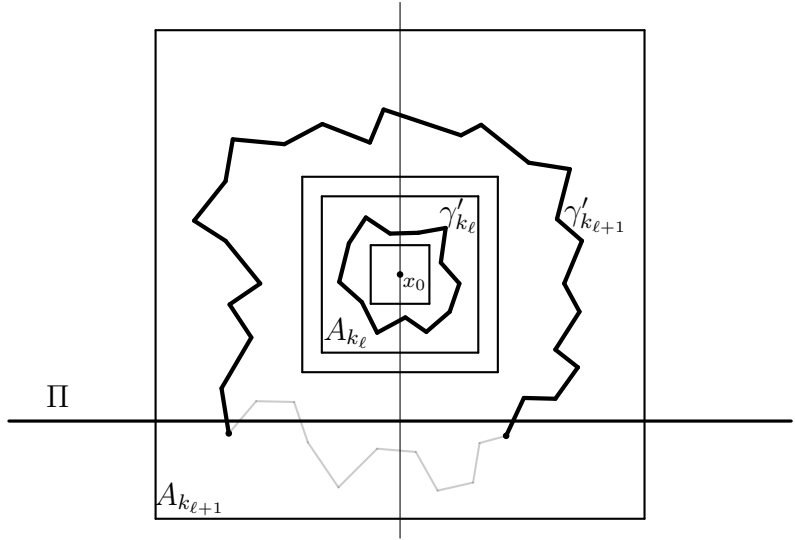

So, assume that the event $\cB_{i_0, 1}\cap \cB_{i_0, 2}\cap \cD$ holds for $G(\lambda',p)$, and let $P_1$ and $P_2$ be horizontal crossings of $\Pi_1$ and $\Pi_2$, respectively. Also, set $L = \lfloor\delta K/2\rfloor$ and let $(\gamma_{k_{\ell}})_{\ell=1}^L$ be cycles of $G(\lambda', p)$ provided by $\cD$ in the annuli $(A_{k_{\ell}})_{\ell=1}^L$, respectively. Note that parts of these paths may be outside $\Pi$. Thus, for every $\ell\in \{1,\dots,L\}$, denote by $\gamma_{k_{\ell}}'\subseteq \gamma_{k_{\ell}}$ a path or a cycle whose vertices, except possibly the first and the last one, are in $\Pi$ and which separates $x_0$ from the left and the right side of $\Pi$, see Figure~\ref{fig 5}. Then, connecting the paths $P_1$ and $P_2$ to the same path or cycle among $(\gamma_{k_{\ell}}')_{\ell=1}^L$ within $\Pi$ forms a horizontal crossing of $\Pi$.

Now, for each $j\in \{1,2\}$ and each $\ell\in \{1,\dots,L\}$, there are two edges, one in $P_j$ and one in $\gamma_{k_{\ell}}'$, that intersect. By the triangle inequality, we claim that there is one endvertex of the first edge and one endvertex of the second edge which both lie in $\Pi_j$ and are at distance at most $3/2$ from each other: indeed, letting $u_{j,\ell}$ and $v_{j,\ell}$ be the two endvertices of the first edge, $w_{j,\ell}$ be one of the endvertices of the second edge that lies in $\Pi_j$, and $z$ be the intersection point of the two edges, 
$$\min\{\|u_{j,\ell} - w_{j,\ell}\|, \|w_{j,\ell} - v_{j,\ell}\|\}\le \frac{\|u_{j,\ell} - w_{j,\ell}\| + \|w_{j,\ell} - v_{j,\ell}\|}{2}\le \frac{\|u_{j,\ell}-z\| + \|z-v_{j,\ell}\| +  2\|z-w_{j,\ell}\|}{2} \le \frac{3}{2}.$$ 

Fix two vertices as above, and note that the area of the region, obtained by intersecting the two unit balls centered around them and $\Pi_j$, is at least the area of the intersection of the unit balls around $\bo$, around $(3/2, 0)$ and the quarter-plane $\{(x,y): x,y \ge 0\}$, which we denote by $\alpha$. We conclude that, for every $\ell \in \{1,\dots,L\}$, $P_1$ and $P_2$ connect to $\gamma_{k_{\ell}}$ after adding $G(\lambda-\lambda',p)$, with probability at least $((1-\mathrm{e}^{-(\lambda-\lambda')\alpha})p^2)^2$ and these events are all independent of each other since the annuli $(A_k)_{k=0}^K$ are disjoint. Thus, for every sufficiently large $R$, the probability that $G(\lambda, p)$ contains a horizontal crossing of $\Pi$ is at least
$$(1-\eps)(1-(1-(1-\mathrm{e}^{-(\lambda-\lambda')\alpha})^2p^4)^L)\ge 1 - 2\eps.$$
Since this statement holds for any $\eps\in (0,1)$, the proof of Step 4 is completed.

\vspace{0.2cm}

To conclude, note that altogether the four steps prove Proposition~\ref{lem box crossing}.
\end{proof}

In the proof of Step 4 above 
we saw how to glue crossings of two parallel copies of the same rectangle to form a longer crossing. In the next remark, we extend this gluing procedure to the setting of two orthogonal copies of the same rectangle, 
as we shall need this later, e.g.\ in the proof of Theorem~\ref{theo:giant} below.
We omit its proof since it requires only minor modifications to the proof of Step $4$ above.

\begin{remark}\label{rem orthogonal}
Fix $\kappa > 1$ and some $\lambda>\lambda_c(p)$. Then, consider the rectangles $\Pi_3 = [0, \kappa R]\times [0,R]$ and $\Pi_4 = [0, R]\times [0, \kappa R]$. Also, fix any positive integer $N\ge 3$ and denote $\alpha_N = 1/2-N^{-1}$ and $\beta_N = 1/2+N^{-1}$. From Proposition~\ref{lem box crossing} we know that the rectangle $\Pi_3' := [1, \kappa R-1]\times [\alpha_N R, \beta_N R]\subseteq \Pi_3$ (respectively $\Pi_4' := [\alpha_N R, \beta_N R]\times [1, \kappa R-1]\subseteq \Pi_4$) is crossed horizontally (vertically, respectively) in $G(\lambda', p)$ a.a.s.\ as $R\to \infty$, where $\lambda'=\tfrac{\lambda+\lambda_c(p)}{2}$. By considering a number of disjoint annuli around the square $[\alpha_N R, \beta_N R]^2$, we deduce as in Step 4 that any horizontal crossing of $\Pi_3'$ and any vertical crossing of $\Pi_4'$ connect within $\Pi_3\cap \Pi_4$ in $G(\lambda, p)$ with probability arbitrarily close to 1 when $N$ is sufficiently large.
\end{remark}

We are now in position to conclude the proof of the fact that a.a.s.\ $G_n$ has a component of linear size.

\begin{proof}[Proof of Theorem~\ref{theo:giant}]
Fix some large $R >0$ so that $\sqrt{n}/R \in \mathbb N$. Then, tessellate the square $\Lambda_n$ into $R\times R$ squares and combine these into horizontal and vertical \emph{dominos}, that is, rectangles of dimensions $2R\times R$ and $R\times 2R$, respectively.
By identifying dominos with vertices and declaring that two dominos are neighbors if they are orthogonal and their intersection is an $R\times R$ square, this defines a graph isomorphic to (a finite subgraph of) $\mathbb Z^2$. 
We now construct an auxiliary random subgraph $H_{\mathrm{aux}}$ by applying the following bond percolation on $\mathbb Z^2$: let $\lambda':= \frac{\lambda+\lambda_c(p)}{2}$, and call a domino \textit{admissible} if a  thinner inner rectangle, as described in Remark~\ref{rem orthogonal}, is crossed by an open path in $G(\lambda',p)$. In this case, we associate to this domino an arbitrary one of these paths. Then, we say that the edge between two neighboring admissible dominos $\Pi_1$ and $\Pi_2$ is open if their corresponding paths are connected by a path in $\Pi_1\cap \Pi_2\cap G(\lambda, p)$. By Proposition~\ref{lem box crossing} and Remark~\ref{rem orthogonal}, every edge is open with a probability that can be made arbitrarily close to $1$ by choosing $R$ sufficiently large (see the proof of Step 4 above). Moreover, the states of two edges at distance larger than $2$ are independent. In other words, $H_{\mathrm{aux}}$ is obtained via a 2--dependent bond percolation on $\mathbb Z^2$. Thus, by applying Theorem~\ref{thm LSS} we deduce that, by choosing $R$ sufficiently large, 
one can ensure that $H_{\mathrm{aux}}$ dominates 
an independent bond percolation with parameter $q = 3/4$, say. Moreover, it is well known that the critical threshold for independent bond percolation on $\mathbb Z^2$ is equal to $1/2$ and also that, in the supercritical regime, by Theorem~\ref{PP theorem}, the largest component in the box $\Lambda_n$ is
a.a.s.\  of size $\Omega_q(n)$ as $n\to \infty$. Finally, notice that, since $R$ is a large but fixed constant, if $H_{\mathrm{aux}}$ contains a component of linear size, then $G_n$ does as well, which concludes the proof of Theorem~\ref{theo:giant}.
\end{proof}

\subsection{The second-largest component}\label{sec.second} 
This section is dedicated to the proof of the following theorem. Recall that we assume throughout that $\lambda$ and $p$ satisfy $\lambda>\lambda_c(p)$. 
\begin{theorem}\label{lem theta log^2}
A.a.s.\ the second-largest component in $G_n(\lambda, p)$ has  size $\Theta_{\lambda,p}((\log n)^2)$.
\end{theorem}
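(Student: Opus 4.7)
The plan is to isolate a sub-square. Fix $c_1 \in (0, 1/(4\lambda))$ and tessellate $\Lambda_n$ into disjoint sub-squares of side $L = c_1 \log n$. Select a sub-collection $\mathcal F$ of these sub-squares pairwise separated by Euclidean distance $\ge 2$, so that $|\mathcal F|=\Omega(n/(\log n)^2)$ and the width-$1$ strips around the sub-squares are pairwise disjoint. For $S \in \mathcal F$, the event $\mathcal I_S$ that the width-$1$ strip around $S$ inside $\Lambda_n$ contains no Poisson point has probability at least $\exp(-\lambda(4L+O(1)))=n^{-4c_1\lambda+o(1)}$, and these events are independent across $S \in \mathcal F$. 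Conditionally on $\mathcal I_S$, the Poisson points inside $S$ form a fresh PPP of intensity $\lambda$; applying Theorem~\ref{theo:giant} at scale $L\to\infty$ produces, with probability $1-o(1)$, a connected component of $G$ inside $S$ of size $\Omega_{\lambda,p}(L^2) = \Omega((\log n)^2)$. This component is trapped inside the isolated $S$, hence distinct from the giant of $G_n$. Since $|\mathcal F|\cdot \mathbb P(\mathcal I_S)\to\infty$ and the $\mathcal I_S$ are independent, a.a.s.\ at least one sub-square witnesses a non-giant component of size $\Omega((\log n)^2)$.

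\textbf{Upper bound, geometric setup.} Fix a large constant $C_0$; the aim is to show that a.a.s.\ every component of $G_n$ of size $\ge C_0(\log n)^2$ is the giant. A Chernoff bound on Poisson counts (Lemma~\ref{Chernoff poisson}), union-bounded over sub-boxes of $\Lambda_n$ with, say, rational corners, forces any such component to have Euclidean diameter at least $c_2\sqrt{C_0}\log n$ for some $c_2=c_2(\lambda)>0$. Set $\lambda' := (\lambda+\lambda_c(p))/2$ and couple $G=G(\lambda,p)$ with $G' := G(\lambda',p)$ by viewing the extra Poisson points of intensity $\lambda-\lambda'$ and their incident edges as a sprinkling on top of $G'$; let $\mathcal G'_\infty$ denote the giant component of $G'|_{\Lambda_n}$ provided by Theorem~\ref{theo:giant}. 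Around each $x\in\Lambda_n$ I consider $M=\Theta(\log n)$ pairwise disjoint annuli $A_k(x)$ whose outer radii grow geometrically up to $c_2\sqrt{C_0}\log n/4$. Iterating Step 3 of Proposition~\ref{lem box crossing} together with the positive density of $\mathcal G'_\infty$, each $A_k(x)$ contains a cycle of $\mathcal G'_\infty$ surrounding $x$ with probability at least some $\delta>0$, independently over $k$ by spatial disjointness. A Chernoff bound over $k$ and a union bound over a $1$-net of $\Lambda_n$ then yield an event $\mathcal E$ with $\mathbb P(\mathcal E)\ge 1-n^{-10}$ on which every $x\in\Lambda_n$ is surrounded by at least $\delta M/2=\Omega(\log n)$ such cycles.

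\textbf{Upper bound, sprinkling.} Assume $\mathcal E$ and suppose $x$ lies in a component $C$ of $G_n$ of size $\ge C_0(\log n)^2$ with $C\cap\mathcal G'_\infty=\emptyset$. The diameter estimate provides $y\in C$ with $\|x-y\|\ge c_2\sqrt{C_0}\log n/2$; any path of $C$ from $x$ to $y$ crosses every surrounding cycle $\gamma_k\subseteq\mathcal G'_\infty$ at some point $z_k\in A_k(x)$. Mimicking Step 4 of Proposition~\ref{lem box crossing}, a new Poisson point of intensity $\lambda-\lambda'$ falling in a small disk around $z_k$ locally bridges the two crossing edges with probability at least some constant $\alpha>0$. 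Spatial disjointness of the annuli makes these sprinkling events independent, so the probability that they all fail is at most $(1-\alpha)^{\Omega(\log n)}=n^{-\beta}$ for some $\beta>0$. Any success contradicts $C\cap\mathcal G'_\infty=\emptyset$. Hence, for every $x$, the probability that $x$ belongs to a non-giant component of size $\ge C_0(\log n)^2$ is at most $\mathbb P(\bar{\mathcal E})+n^{-\beta}$. Choosing $C_0$ large enough makes $\beta>1$, and Lemma~\ref{campbell-mecke} then bounds the expected number of Poisson points lying in such components by $O(n\cdot n^{-\beta})=o(1)$; Markov's inequality finishes the upper bound.

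\textbf{Main obstacle.} The delicate point is the rigorous execution of the sprinkling step, since the component $C$ is defined inside $G$, after sprinkling, and not inside $G'$. The natural formalization reveals the randomness in stages: first expose $G'$ and the cycles witnessing $\mathcal E$; then run a BFS-style exploration of $C$ in $G$ which, upon detecting a potential geometric crossing of some $\gamma_k$ in $A_k(x)$, reveals only the extra Poisson points in a tiny neighborhood of the candidate crossing point. The local nature of the sprinkling (i) makes the events in disjoint annuli independent, yielding the product bound, and (ii) avoids creating auxiliary non-giant components of large diameter elsewhere, which is precisely the difficulty flagged in the introduction against a naive global sprinkling. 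A secondary technicality is to argue that the cycles produced by Step 3 of Proposition~\ref{lem box crossing} actually lie in $\mathcal G'_\infty$ rather than in some smaller component; this is handled by a short auxiliary sprinkling step absorbing each cycle into the giant with uniformly positive probability.
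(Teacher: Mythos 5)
Your lower bound matches the paper's argument essentially verbatim and is fine. For the upper bound you follow the same overall strategy as the paper (surround each point by many disjoint cycles of the giant, show that any long path must cross them all, and pay a constant price per cycle via local sprinkling, finishing with Campbell--Mecke), but there is a genuine quantitative gap in how you produce the cycles. You claim $M=\Theta(\log n)$ pairwise disjoint annuli around $x$ "whose outer radii grow geometrically up to $c_2\sqrt{C_0}\log n/4$", each containing a circuit with probability $\ge\delta$ by Step~3 of Proposition~\ref{lem box crossing}. These two requirements are incompatible: Step~3 only gives circuits with probability bounded away from zero in annuli of \emph{constant aspect ratio} (e.g.\ $A(r,3r)$), so disjointness forces the radii to grow geometrically, and geometric growth from a constant scale up to $O(\log n)$ yields only $\Theta(\log\log n)$ annuli. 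A Chernoff bound then gives failure probability $(1-\alpha)^{\Theta(\log\log n)}=(\log n)^{-\Theta(1)}$, which is nowhere near the $o(n^{-1})$ needed for the Mecke integral, and no choice of $C_0$ repairs this. Thin annuli $A(kR_0,(k+1)R_0)$ would give the right count but are not covered by Step~3 (nor by Proposition~\ref{lem box crossing}, which requires the short side of the rectangle to tend to infinity for fixed aspect ratio). This is exactly why the paper routes the argument through the renormalized lattice graph $H_{\mathrm{aux}}$: Corollary~\ref{cor.Bercross} (via Lemma~\ref{lem:kconnect} on $k$-th order dual clusters) shows that a supercritical lattice box of side $N=\Theta(\log n)$ contains $\Theta(N)$ \emph{disjoint} crossings, which translates back into $\Theta(\log n)$ nested cycles of the giant in a window of side $\Theta(\log n)$. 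Without some substitute for this renormalization step, your exponent $\beta$ cannot be made larger than $o(1)$.

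A secondary, smaller concern: your "main obstacle" paragraph correctly identifies that the component $C$ lives in the sprinkled graph, but the resolution is more delicate than "reveal only the extra points near the candidate crossing". The paper needs to (i) locate the cycles by an exploration from the outside in, so that the region inside the outermost cycle is still fresh, (ii) introduce a stopping time $T$ for the first time the cluster of $x$ comes within distance one of the cycle, and (iii) invoke FKG to control the conditioning induced by the fact that the sprinkled points are also used to close the cycles of $H_{\mathrm{aux}}$ themselves. Your sketch is compatible with this but does not confront point (iii), and your proposed "auxiliary sprinkling to absorb each cycle into the giant" would reuse the $\lambda-\lambda'$ points in a way that threatens both the independence across annuli and the guarantee that no new large components are created.
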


The proof is divided into two parts. We start with the lower bound.

\begin{proposition}\label{lem LB 2nd component}
A.a.s.\ the second-largest component in $G_n(\lambda, p)$ has size $\Omega_{\lambda,p}((\log n)^2)$.
\end{proposition}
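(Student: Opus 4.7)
The plan is to exhibit a.a.s.\ an \emph{isolated} subsquare $Q\subseteq \Lambda_n$ of side length $\Theta(\log n)$ inside which $G$ already realises a connected component of size $\Omega((\log n)^2)$. Such a component is automatically a component of $G_n$, and since Theorem~\ref{theo:giant} ensures $L_1(G_n)=\Omega(n)\gg(\log n)^2$ a.a.s., it is disjoint from the giant and thus lower-bounds the second-largest component.

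To realise this, fix $\delta>0$ small and $c_1=c_1(\lambda,\delta)>0$ satisfying $4\lambda(1+\delta)c_1<1$, set $k:=c_1\log n$, and tile $\Lambda_n$ by $N=\Theta(n/k^2)$ disjoint boxes of side $k+2(1+\delta)$. Call the $i$-th tile \emph{good} if its central $k\times k$ subsquare $Q_i$ is surrounded (inside the tile) by an empty annulus of width $1+\delta$, and if the restriction of $G$ to $Q_i$ admits a component of size at least $c_2 k^2$ for some constant $c_2=c_2(\lambda,p)>0$. Because these two conditions concern disjoint subsets of $\mathbb R^2$, they are independent: the annulus is empty with probability $\exp(-\lambda(4(1+\delta)k+O(1)))=n^{-4\lambda(1+\delta)c_1+o(1)}$, and conditionally on emptiness the Poisson process inside $Q_i$ is still of intensity $\lambda$, so Theorem~\ref{theo:giant} applied to a box of area $k^2\to\infty$ delivers a component of size $\Omega(k^2)=\Omega((\log n)^2)$ inside $Q_i$ with probability $1-o(1)$. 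Hence a single tile is good with probability $n^{-4\lambda(1+\delta)c_1+o(1)}$.

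It remains to show that a good tile exists a.a.s. The goodness events depend on the Poisson process in disjoint regions and on disjoint sets of potential edges, so they are independent across $i$. The choice of $c_1$ gives $N\cdot\Pr(\text{tile }1\text{ is good})=\Theta\bigl(n^{1-4\lambda(1+\delta)c_1+o(1)}/(\log n)^2\bigr)\to\infty$, and the elementary estimate $1-\prod_i(1-p_i)\ge 1-\exp(-\sum_i p_i)$ then yields $\Pr(\exists\,i\text{ good})\to 1$. On such an event, the width $1+\delta>1$ of the empty annulus forbids any edge of $G_n$ to exit $Q_i$, so the component produced inside $Q_i$ is a component of $G_n$ of size $\Omega((\log n)^2)$. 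The delicate calibration is the choice of $c_1$: the annulus must be strictly wider than $1$ in order to fully decouple $Q_i$, yet $c_1$ must be strictly below $1/(4\lambda)$ so that the emptiness probability decays slowly enough in $n$ to overcome the fact that there are only $O(n/(\log n)^2)$ tiles available in a first-moment estimate.
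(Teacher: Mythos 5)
Your proof is correct and follows essentially the same route as the paper's: tessellate $\Lambda_n$ into $\Theta(\log n)$-sided boxes, isolate one by an empty buffer of width larger than the connection radius at polynomially small (but not too small) cost, invoke Theorem~\ref{theo:giant} inside it, and conclude by independence over the $n^{1-o(1)}$ boxes. The only cosmetic difference is that the paper takes the empty band of width $1$ just inside each subsquare rather than an external annulus of width $1+\delta$, and fixes the side length as $\tfrac{\log n}{5\lambda}$ instead of carrying the constant $c_1$ symbolically.
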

\begin{proof}
Tessellate the square $\Lambda_n$ into subsquares of side length $\tfrac{\log n}{5\lambda}$. On the one hand, for any given square of the tessellation, the probability that there is no point of $\mathrm{Po}(\lambda)$ in it at distance at most 1 from its boundary is equal to  $\exp\left(-\lambda\left(\tfrac{4\log n}{5\lambda}-4\right)\right)\ge n^{-4/5}$. On the other hand, by Theorem~\ref{theo:giant}, the probability that in such a square there is a connected component of size $\Omega_{\lambda,p}((\log n)^2)$, whose vertices are all at distance at least one from its boundary, is equal to $1-o(1)$. In particular, for any square in the tessellation, the probability that it contains a
connected component of size $\Omega_{\lambda,p}((\log n)^2)$ which is disconnected from the rest of the graph $G_n$ is at least $(1-o(1)) n^{-4/5}$. Since there are $n^{1-o(1)}$ squares in the tessellation, by Chernoff's inequality a.a.s.\ there is a square that satisfies the above condition, concluding the proof of the proposition.
\end{proof}

We now aim at showing the upper bound. This will be done in several steps. The first point is to show that, roughly speaking, in supercritical Bernoulli bond percolation on $\mathbb Z^2$ large boxes are typically crossed by many disjoint paths (more precisely, the number of crossings typically has the same order as the side length of the box), see Corollary~\ref{cor.Bercross} below for a more precise statement. Then, we use the fact observed in the previous section that, in a sense, the continuous model dominates supercritical Bernoulli bond percolation. We deduce that any small window of side length of order $\log n$ in $\Lambda_n$ contains many disjoint cycles surrounding its center. Finally, some delicate sprinkling allows us to conclude that a.a.s.\ for each vertex $v$ in $\Lambda_n$, no path from $v$ exits the window around $v$ without connecting to at least one of these cycles.

Theorem~\ref{lem theta log^2} will then follow immediately by combining Proposition~\ref{lem LB 2nd component} with the following  
upper bound for the second-largest component, which we state as a separate proposition.
\begin{proposition}\label{prop:second.upper}
A.a.s.\ the second-largest component in $G_n(\lambda, p)$ has size $O_{\lambda,p}((\log n)^2)$.
\end{proposition}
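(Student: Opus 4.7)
The plan is to establish the stronger statement that a.a.s.\ every component of $G_n$ other than the largest has Euclidean diameter $O_{\lambda,p}(\log n)$; together with Lemma~\ref{Chernoff poisson} applied to disks of radius $O(\log n)$, this yields the claimed $O_{\lambda,p}((\log n)^2)$ bound on size. Throughout, I would fix an intermediate intensity $\lambda' \in (\lambda_c(p), \lambda)$ and couple $G(\lambda', p)\subseteq G(\lambda, p)$ via the standard Poisson decomposition, writing $G(\lambda, p)$ as $G(\lambda', p)$ together with an independent sprinkling of Poisson points of intensity $\lambda - \lambda'$.

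The first step is a \emph{many cycles} lemma: for any prescribed $\alpha > 0$ there exist constants $R_0$ and $K$ such that, for every $x \in \Lambda_n$ with $\Lambda_x(K\log n) \subseteq \Lambda_n$, with probability at least $1 - n^{-\alpha}$ the graph $G(\lambda', p)$ contains $\Omega(\log n)$ disjoint cycles $\gamma_k$ surrounding $x$ in disjoint annuli of width $R_0$ at radii $R_0, 2R_0, \dots, \lfloor K\log n/R_0\rfloor R_0$ around $x$. Each individual annulus contains such a cycle with probability bounded away from zero by (a thin-annulus adaptation of) Step~3 in the proof of Proposition~\ref{lem box crossing}, and since disjoint annuli are independent, Chernoff's inequality yields the desired concentration. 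Moreover, using Step~4 of that proof together with Theorem~\ref{theo:giant}, all these cycles may be taken to lie in the largest component of $G_n$ with overwhelming probability.

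Conditionally on this event, I would argue that any component $C(x) \subseteq G_n$ containing $x$ of Euclidean diameter at least $K\log n$ must equal the largest component. Such $C(x)$ must intersect each of the $\Omega(\log n)$ surrounding cycles $\gamma_k$ geometrically, and at each such intersection the sprinkling in a width-$1$ neighborhood of $\gamma_k$ produces a bond-percolated edge connecting $C(x)$ to $\gamma_k$ with probability bounded away from zero (via the area-of-intersection estimate from Step~4 of the proof of Proposition~\ref{lem box crossing}). These connection events are independent across disjoint annuli, so the failure probability is $(1-c)^{\Omega(\log n)} = O(n^{-\beta})$ for any prescribed $\beta > 0$, provided the multiplicative constant in front of $\log n$ is chosen large enough.

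The main obstacle, as emphasized in the introduction, is that a naive \emph{global} sprinkling could itself create new non-giant components of large diameter, spoiling the upper bound. The resolution is \emph{local} sprinkling: after revealing $G(\lambda',p)$ and the cycles, I would expose the sprinkled Poisson points in two stages, first those within distance $1$ of some $\gamma_k$ and then the remaining ones. The first stage can only attach existing components to the giant (since each $\gamma_k$ already lies in the giant), so it cannot create any new non-giant component; the remaining points fall into complementary regions lying between consecutive cycles, where any merger across an edge of length at most $1$ extends diameters by a strictly controlled amount, preventing the emergence of non-giant components of diameter exceeding $K\log n$. Finally, taking a union bound over an $O(1)$-spaced grid of $O(n)$ centers $x$ in $\Lambda_n$ (and handling the boundary strip of area $O(\sqrt{n}\log n)$ within distance $K\log n$ of $\partial \Lambda_n$ by the same argument applied to half-annuli) yields the desired a.a.s.\ conclusion.
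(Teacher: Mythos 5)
Your high-level strategy is the one the paper follows (many cycles of the giant around each point, a per-cycle connection cost bounded away from one, and a product over $\Omega(\log n)$ cycles), but two of the steps you rely on do not hold as stated. First, the ``many cycles'' lemma is false in the form you propose: an annulus $A_x(jR_0,(j+1)R_0)$ of \emph{fixed} width $R_0$ at radius $jR_0$ contains a circuit of $G(\lambda',p)$ with probability tending to $0$ (exponentially fast) as $j\to\infty$, since such a circuit forces a lengthwise crossing of a $\Theta(jR_0)\times R_0$ rectangle and strips of bounded width do not percolate (they a.s.\ contain infinitely many empty vertical slabs). Step 3 of the proof of Proposition~\ref{lem box crossing} only treats annuli $A(R,3R)$ of \emph{constant aspect ratio}, and nesting such annuli inside $\Lambda_x(K\log n)$ yields only $O(\log\log n)$ of them, giving a failure probability $(1-c)^{O(\log\log n)}$, far from the $o(n^{-1})$ needed for the union bound. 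This is precisely why the proof passes through a renormalization: Corollary~\ref{cor.Bercross} (via planar duality and the exponential decay of $k$-th order clusters in Lemma~\ref{lem:kconnect}) produces $W=\Theta(\log n)$ \emph{disjoint} crossings of renormalized boxes of side $\Theta(\log n)$, and the log-domino event $\mathcal A_n$ simultaneously guarantees that all the resulting circuits belong to a single linear-sized component --- a fact you assert (``all these cycles may be taken to lie in the largest component'') but do not derive.

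Second, the sprinkling step is not sound. Newly sprinkled points within distance $1$ of a cycle $\gamma_k$ are joined to $\gamma_k$ (and to each other) only with probability $p$ per candidate edge, so your first exposure stage can certainly create components not containing $\gamma_k$, and the width-$1$ neighborhood of $\gamma_k$ has diameter $\Theta(\log n)$, so these can be large; the assertion that this stage ``cannot create any new non-giant component'' is false. There is also a circularity you do not address: the component $C(x)$ whose diameter you want to bound lives in the fully sprinkled graph $G_n$, so you cannot first locate its geometric intersections with the $\gamma_k$ and then treat the sprinkling there as fresh independent randomness yielding independent connection events. The paper resolves both issues by exploring $G_n^x$ up to a stopping time $T$ (the first time the cluster of $x$ approaches the outermost cycle $\mathcal C_1$), then sprinkling only in a \emph{constant-size} square $Q_T$ where it demands a dense, fully connected net of new points (the event $\mathcal E_{n,1}^x$), so that on the event used every sprinkled point in that window joins the giant; the failure of this event is absorbed into the per-cycle factor $1-\varepsilon$, FKG handles the conditioning on $Q_T$ closing the cycle, and the cycles are peeled off one at a time by a conditional induction rather than by raw independence. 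Without these ingredients your bound $(1-c)^{\Omega(\log n)}$ is not justified.
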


To prove this proposition, we first need some preliminary results on ordinary Bernoulli bond percolation, as mentioned above. 
For $k\ge 1$ and $x\in \mathbb Z^2$, we define $\mathcal C_k(x)$ the {\it $k$-th order connected component of $x$} 
as the set of vertices $y\in \mathbb Z^2$ which are connected to $x$ by a path made of any number of open edges and at most $k-1$ closed edges.  
In particular, the first order connected component is the usual connected component of $x$. Also, for $A\subseteq \mathbb Z^2$, define $\mathcal C_k^A(x)$ 
as the $k$-th order connected component of $x$ in $\mathbb Z^2\setminus A$. We prove the following lemma:  
\begin{lemma}\label{lem:kconnect}
For any $q\in [0,1/2)$, there are positive constants $\alpha$ and $C$ such that, for any positive integers $k$ and $t$, one has 
$$\mathbb P_q\big(\sup_{x\in \mathcal C_k(\bo)} \|x\| \ge t\big)\le C^k \exp(-\alpha t).$$ 
\end{lemma}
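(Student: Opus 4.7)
The plan is to prove the pointwise estimate
$$\mathbb P_q(x \in \mathcal C_k(\bo)) \le C^k e^{-\alpha_0 \|x\|}, \qquad x \in \mathbb Z^2,$$
for some constants $C, \alpha_0 > 0$ depending only on $q$, and then to deduce the lemma by summing over lattice points $x$ with $\|x\| \ge t$; the resulting polynomial prefactor from counting lattice points in annular shells is absorbed into the exponential at the cost of a slightly smaller rate $\alpha < \alpha_0$.

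For the pointwise bound, I would start from the observation that if $x \in \mathcal C_k(\bo)$, then (after loop-erasure if necessary) there is a simple path from $\bo$ to $x$ in $\mathbb Z^2$ using exactly $j \le k-1$ closed edges. Such a path decomposes canonically into $j+1$ edge-disjoint open subpaths $\pi_0, \ldots, \pi_j$ separated by the closed edges $e_i = \{a_i, b_i\}$, with $\pi_0 : \bo \to a_1$, $\pi_i : b_i \to a_{i+1}$ for $1 \le i \le j-1$, and $\pi_j : b_j \to x$. A union bound over $j$ and over the ordered tuple of oriented edges $(a_i, b_i)_{i=1}^{j}$ then bounds $\mathbb P(x \in \mathcal C_k(\bo))$ by $\sum_{j=0}^{k-1} \sum_{(a_i, b_i)} \mathbb P(\mathcal E_j)$, where $\mathcal E_j$ is the event that each $e_i$ is closed and that edge-disjoint open paths with the prescribed endpoints exist in $\mathbb Z^2 \setminus \{e_1, \ldots, e_j\}$ (setting $b_0 := \bo$ and $a_{j+1} := x$). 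The two conjuncts in $\mathcal E_j$ live on disjoint edge sets and are therefore independent; the van den Berg–Kesten inequality applied in $\mathbb Z^2 \setminus \{e_1, \ldots, e_j\}$ controls the disjoint-open-paths part by $\prod_{i=0}^{j} \mathbb P_q(b_i \leftrightarrow a_{i+1})$, and Theorem~\ref{DC theorem} then bounds each factor by $\exp(-c_q \|a_{i+1} - b_i\|)$.

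It remains to control the combinatorial sum
$$\sum_{j=0}^{k-1} (1-q)^j \sum_{(a_i, b_i)_{i=1}^{j}} \prod_{i=0}^{j} \exp(-c_q \|a_{i+1} - b_i\|).$$
Reparameterizing by $\Delta_i := a_{i+1} - b_i \in \mathbb Z^2$ and $\epsilon_i := b_i - a_i$ (one of four lattice unit vectors), subject to $\sum_{i=0}^{j} \Delta_i + \sum_{i=1}^{j} \epsilon_i = x$, and fixing any $\epsilon \in (0, c_q)$, a standard generating-function computation using $\|\sum_i \Delta_i\| \le \sum_i \|\Delta_i\|$ yields
$$\sum_{\substack{\Delta_0, \ldots, \Delta_j \\ \sum_i \Delta_i = z}} \prod_{i=0}^{j} e^{-c_q \|\Delta_i\|} \le M_\epsilon^{j+1} e^{-\epsilon \|z\|}, \qquad M_\epsilon := \sum_{\Delta \in \mathbb Z^2} e^{(\epsilon - c_q)\|\Delta\|} < \infty.$$
Summing over the $4^j$ choices of $(\epsilon_1, \ldots, \epsilon_j)$ (and using $\|x - \sum_i \epsilon_i\| \ge \|x\| - j$) and then resumming the geometric series in $j$ produces the bound $\mathbb P_q(x \in \mathcal C_k(\bo)) \le C^k e^{-\epsilon \|x\|}$ with $C$ depending only on $q$ and $\epsilon$, as required.

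The main obstacle I anticipate is the bookkeeping around the BK step: one must verify that the open subpaths extracted from a simple path are genuinely edge-disjoint witnesses (which is immediate from simpleness) and apply BK rigorously — this is most transparent by first restricting the configuration to a large box $\Lambda_{\bo}(N)$ and letting $N \to \infty$ at the end. Everything else in the argument — the exponential decay from Theorem~\ref{DC theorem}, the convolution estimate, the geometric resummation in $j$, and the final summation of the pointwise bound over lattice shells $\{x : \|x\| \ge t\}$ — is routine.
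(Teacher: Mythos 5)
Your argument is correct, but it takes a genuinely different route from the paper. The paper proves the lemma by induction on $k$: the case $k=1$ is Theorem~\ref{DC theorem}, and the inductive step takes a union bound over the neighbouring pair $(v,w)$ with $v\in\mathcal C_1(\bo)$ and $w\notin\mathcal C_1(\bo)$ through which $\mathcal C_{k+1}(\bo)$ leaves the first-order cluster, conditions on $\mathcal C_1(\bo)$, and uses that $\mathcal C_k^{A}(w)$ is stochastically dominated by $\mathcal C_k(w)$; the resulting convolution $\sum_v \mathbb P_q(\bo\leftrightarrow v)\,e^{\alpha\|v\|}$ is made finite by taking $\alpha$ small, which is exactly where your constant $M_\eps$ appears in disguise. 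You instead prove the stronger pointwise statement $\mathbb P_q(x\in\mathcal C_k(\bo))\le C^k e^{-\alpha_0\|x\|}$ in one shot, by loop-erasing a witnessing path, splitting it at its $j\le k-1$ closed edges, using independence between the closed edges and the remaining edge set, the BK inequality for the $j+1$ edge-disjoint open connections, and a generating-function resummation over the displacements $\Delta_i$. Both routes rest on the same two pillars (Theorem~\ref{DC theorem} and a convolution estimate that converts a product of exponentials into a single exponential at a slightly worse rate), and your combinatorics is sound, including the finite-volume truncation needed to apply BK rigorously and the absorption of the lattice-shell count into the exponent when summing over $\|x\|\ge t$. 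The trade-off: the paper's induction is shorter and avoids BK entirely (the edge-disjointness is replaced by the conditioning/domination step), whereas your argument yields the $k$-th order two-point function bound as an intermediate result and makes the dependence of $C$ on $q$ more explicit, at the price of heavier bookkeeping over the ordered tuples of closed edges.
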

\begin{proof}
The proof proceeds by induction on $k$. The result for  $k=1$ follows immediately from Theorem~\ref{DC theorem}. Now assume that it holds for some $k$. Then, for any $t\ge 1$, we aim to obtain an upper bound for $\mathbb P_q(\sup_{x\in \mathcal C_{k+1}(\mathbf{0})}\|x\|\geq t)$. To do this, we take a union bound over all pairs of neighbors $v\in \mathcal C_1(\mathbf{0})$ and $w\in \mathcal C_2(\mathbf{0}) \setminus \mathcal C_1(\mathbf{0})$ (that is, vertex $v$ is in the component of the origin $\mathcal C_1(\mathbf{0})$ and $w$ is a neighbor of $v$ in the complement of $\mathcal C_1(\mathbf{0})$).
Then, by the triangle inequality, the distance of a vertex $x$ to the origin is at most 
$$
\|x\|\leq \|x-w\|+1+\|v\|, 
$$ 
where we use that $\|v-w\|=1$ as $v$ and $w$ are neighbors. As such, using a union bound yields
$$\mathbb P_q\big(\sup_{x\in \mathcal C_{k+1}(\bo)} \|x\| \ge t\big) \le \sum_{\substack{v,w\in \mathbb Z^2 \\  \|w-v\|=1}} \mathbb P_q\big(\bo\leftrightarrow v, \, \bo\centernot\leftrightarrow w, \sup_{x\in \mathcal C_k^{\mathcal C_1(\bo)}(w)} \|x-w\|\ge t -1 - \|v\|\big).$$
Then, by conditioning on $\mathcal C_1(\bo)$ and using the induction hypothesis together with the fact that, for any fixed $A\subseteq \mathbb Z^2$, $\mathcal C_k^A(w)$ is dominated by $\mathcal C_k(w)$, we get 
\begin{align}
\mathbb P_q\big(\sup_{x\in \mathcal C_{k+1}(\bo)} \|x\| \ge t\big) & \le 4\sum_{v\in \mathbb Z^2} \mathbb P_q\big(\bo\leftrightarrow v) \cdot \mathbb P_q\big(\sup_{x\in \mathcal C_k(\bo)} \|x\| \ge t -1 -  \|v\|\big)\nonumber \\
& \le C^k \mathrm{e}^{-\alpha t}\cdot  \Big(4\mathrm{e}^\alpha\sum_{v\in \mathbb Z^2} \mathbb P_q(\bo\leftrightarrow v)\mathrm{e}^{\alpha \|v\|}\Big),\label{eq sum}
\end{align}
where the factor of 4 in the first inequality comes from the fact that every vertex in $\mathbb Z^2$ has 4 neighbors. Applying again Theorem~\ref{DC theorem} yields the existence of a sufficiently small $\alpha$ so that the sum in~\eqref{eq sum} is finite, which concludes the proof of the induction step for $C = 4\mathrm{e}^\alpha\sum_{v\in \mathbb Z^2} \mathbb P_q(\bo\leftrightarrow v)\mathrm{e}^{\alpha \|v\|}$.
\end{proof}

As a corollary, we obtain the following fact.  
\begin{corollary}\label{cor.Bercross}
Fix $q>1/2$. There exist positive constants $C$ and $\alpha$, such that for any positive integers $k$ and $N$, 
$$\mathbb P_q \big(\text{There exists $k$ disjoint horizontal crossings in $[0,2N]\times [0,N]$}\big) \ge 1-  C^k\exp(-\alpha N). $$    
\end{corollary}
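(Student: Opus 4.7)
The plan is to derive Corollary~\ref{cor.Bercross} by combining planar bond percolation duality with Menger's theorem, so as to reduce the estimate to an application of Lemma~\ref{lem:kconnect} for the dual percolation at parameter $1-q < 1/2$.

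First, I would set up duality on the rectangle $R := [0,2N]\times[0,N]$. By Menger's theorem for edge-disjoint paths, either $R$ admits $k$ edge-disjoint open horizontal crossings, or there exist at most $k-1$ open edges whose removal disconnects the left side of $R$ from the right side within the open subgraph of $R$. In the latter case, the standard planar duality between cuts in the primal lattice and paths in the dual lattice yields a vertical dual crossing of a translate of $R$ consisting entirely of dual edges, at most $k-1$ of which cross open primal edges (equivalently, are dual-closed). Picking any starting vertex $v$ on the top side of this dual path and letting $w$ denote its endpoint on the bottom side, we obtain a path in the dual lattice from $v$ to $w$ of Euclidean length at least $N$ and using at most $k-1$ dual-closed edges. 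In the terminology introduced just before Lemma~\ref{lem:kconnect}, this says exactly that $w\in\mathcal C_k(v)$ in the dual percolation.

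Since the dual bond percolation has parameter $1-q\in[0,1/2)$, Lemma~\ref{lem:kconnect} supplies constants $C,\alpha>0$ such that for every vertex $v$,
$$\mathbb P_{1-q}\!\left(\sup_{x\in \mathcal C_k(v)}\|x-v\|\ge N\right)\le C^k\exp(-\alpha N).$$
Taking a union bound over the $O(N)$ candidate starting vertices on the top edge of the dual rectangle produces an extra polynomial factor in $N$, which can be absorbed into the exponential at the cost of slightly decreasing $\alpha$ and enlarging $C$. The resulting bound on the probability that fewer than $k$ edge-disjoint horizontal crossings exist is of the form $C'^{\,k}\exp(-\alpha' N)$, which is exactly the assertion of the corollary.

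The main obstacle is the duality step: making fully precise the implication that fewer than $k$ edge-disjoint open horizontal crossings of $R$ produce a top-to-bottom dual path in (a translate of) $R$ using at most $k-1$ dual-closed edges. This is a classical argument (an edge-disjoint version of the Harris--Kesten planar duality combined with the edge form of Menger's theorem), but some care with boundary conventions is required to ensure that Menger is applied to the correct source/sink contraction and that the dual cut so produced lies in the intended window. Once this translation is in place, the rest of the proof consists of a direct invocation of Lemma~\ref{lem:kconnect} and the routine union bound described above.
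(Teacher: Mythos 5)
Your proposal is correct and follows essentially the same route as the paper: pass to the dual percolation at parameter $1-q<1/2$, translate the absence of $k$ disjoint crossings into a top-to-bottom dual path using at most $k-1$ dual-closed edges (i.e.\ a $k$-th order dual connection spanning the rectangle), apply Lemma~\ref{lem:kconnect}, and conclude with a union bound over the $O(N)$ boundary dual vertices, absorbing the polynomial factor by shrinking $\alpha$. The only difference is that you justify the duality step explicitly via the edge version of Menger's theorem, whereas the paper states it as a direct observation; this is a presentational refinement, not a different argument.
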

\begin{proof}
Consider the dual graph of $\mathbb Z^2$ (its vertices are the faces delimited by the edges of the graph $\mathbb Z^2$, and two faces are adjacent if they share an edge). 
Recall that one can couple Bernoulli bond percolation with parameter $q$ (and $1-q$, respectively) on $\mathbb Z^2$ (and its dual, respectively) by declaring an edge between two vertices of the dual graph open if the edge separating the two corresponding faces of the original graph is closed, and vice versa. Now it suffices to observe that if there are no $k$ disjoint horizontal crossings of the rectangle $[0,2N] \times [0,N]$, then the $k$-th order connected component of one of the vertices, corresponding to the squares with centers $\{(1/2+i, -1/2)\}_{i=0}^{2N-1}$ in the dual graph, reaches the top of the rectangle. By Lemma~\ref{lem:kconnect}, we know that for any such vertex this happens with probability 
at most $C^k \exp(-\alpha N)$ for some positive constants $\alpha$ and $C$. Then, using a union bound over the $2N$ dual vertices on the bottom side and reducing $\alpha$ by factor of 2, say, concludes the proof.
\end{proof}

We are now ready to prove the upper bound on the size of the second-largest component of $G_n$. Its proof goes roughly as follows: To begin with, recall the dominos and the auxiliary graph $H_{\mathrm{aux}}$ from the proof of Theorem~\ref{theo:giant}, and fix a vertex $x\in G_n$.
We perform some delicate exploration of the space and use sprinkling only in a few carefully chosen dominos which have the potential to connect a path from $x$ at distance at most 1 from them with a cycle in $G_n$ going through them. When doing the sprinkling, we require the formation of a dense net of well-connected points after the second stage, thus making sure that all new points participate in the giant.

We will write $G_n^x$ for the percolated random geometric graph with vertex set $(\mathrm{Po}(\lambda) \cup \{x\})\cap \Lambda_n$.  

\begin{proof}[Proof of Proposition~\ref{prop:second.upper}]
Fix $\lambda' = \tfrac{\lambda+\lambda_c(p)}{2}$ and recall the construction of $G(\lambda, p)$ from the independent copies of $G(\lambda', p)$ and $G(\lambda - \lambda', p)$ used in Step 4 of the proof of Proposition~\ref{lem box crossing}. Consider a sufficiently large $R$ so that $\sqrt{n}/R\in \mathbb N$. We construct a version of the auxiliary graph $H_{\textrm{aux}}$ from the proof of Theorem~\ref{theo:giant} as follows: First, reveal $G(\lambda', p)$ and construct a vertex in $H_{\mathrm{aux}}$ if its corresponding domino contains a path as described in Remark~\ref{rem orthogonal}. Then, construct an edge between two neighboring vertices of $H_{\mathrm{aux}}$ if, firstly, the paths in their corresponding dominos $\Pi_1$ and $\Pi_2$ intersect at a point surrounded by at least $K$ disjoint cycles in $G(\lambda',p)$ as in Remark~\ref{rem orthogonal}, with $K$ to be fixed, and secondly, they are connected within $\Pi_1\cap \Pi_2$ in $G(\lambda, p)$. Note that, up to choosing $R$ and $K$ sufficiently large, $H_{\mathrm{aux}}$ dominates a Bernoulli bond percolation with parameter $3/4$ in a box of $\mathbb Z^2$ with side length roughly $\sqrt{n}/R$. We next tessellate this new box into overlapping horizontal and vertical rectangles with side lengths $C_1\log n$ and $2C_1\log n$ called \emph{log-dominos}, where $C_1$ is some large constant satisfying $\frac{\sqrt n}{RC_1\log n}\in \mathbb N$, which will be fixed later. 
Set
$$W:= \left\lfloor \frac{\alpha C_1}{2\log C}\cdot \log n\right\rfloor, $$ 
with $\alpha$ and $C$ as in Corollary~\ref{cor.Bercross}. 
Then, consider the event $\mathcal A_n$ that all log-dominos are crossed by at least $W$ disjoint paths in $H_{\textrm{aux}}$ along their longer side. 
Note that on $\mathcal A_n$, since crossings of two orthogonal overlapping log-dominos have a common vertex of $H_{\mathrm{aux}}$, all these paths are connected in $H_{\textrm{aux}}$ and thus correspond to a connected component $\widehat{G}_n$ of $G_n$ of size $\Theta(n)$.  Moreover, a union bound and Corollary~\ref{cor.Bercross} tell us that if $C_1$ is chosen sufficiently large, then
\begin{equation}\label{probaAn}
\mathbb P(\mathcal A_n) \ge 1- n^2 \exp\left(-\frac{\alpha C_1\log n}{2}\right) = 1- o(1). 
\end{equation}
We aim to bound the probability that there is a connected component of $G_n$ whose diameter is at least $4C_1 \log n$ without being connected to this giant component. 
For this, the strategy will be roughly to show that if such a component exists, say in a square of side length of order $\log n$ centered at a point $X\in \mathrm{Po}(\lambda)\cap \Lambda_n$, 
it will have to cross many cycles surrounding $X$ in this window, and we estimate the cost of this scenario by sprinkling    $G(\lambda-\lambda', p)$ on top of $G(\lambda', p)$. 
Note that some care is required here; indeed, adding a bit of intensity to the Poisson process could also help to create new connected components with large diameter. 
In our proof, we first discover the cycles of  $H_{\mathrm{aux}}$ surrounding $X$ by an exploration from the boundary of a small box around it towards its center, then find  possible paths starting from $X$ and coming close to these cycles, and finally perform a sprinkling only in constant-sized regions with the aim to connect the path from $X$ to any of the cycles with probability bounded away from 0.

\vspace{1em}

To be more precise now, for any $x\in \Lambda_n$, consider the box $\Lambda_n^x:=\Lambda_x(4C_1\log n)$ and define the event 
$$\mathcal O_n^x:=\big\{x\xleftrightarrow{G_n^x\,} \mathbb R^2\setminus \Lambda_n^x\big\},$$ 
that is, the connected component of $x$ in $G_n^x$ reaches the complement of $\Lambda_n^x$. Let also $\mathcal U_n^x$ be the event that in the graph $H_{\textrm{aux}}\cap \Lambda_n^x$ there are $W$ disjoint cycles surrounding $x$ or, in case $x$ is at distance less than $4C_1\log n$ from $\partial \Lambda_n$, paths from the boundary of $H_{\textrm{aux}}$ to itself, which all surround the point $x$ when seen as cycles (parts of which may coincide with the boundary of $\Lambda_n$). 
Note the important fact that, by construction,
\begin{equation}\label{inclusionAnUn}
\mathcal A_n\ \subseteq \ \bigcap_{x\in \Lambda_n} \mathcal U_n^x.
\end{equation} 
On the event $\mathcal U_n^x$, we denote by $\mathcal C_1$ the outermost cycle of $H_{\textrm{aux}}$ in $\Lambda_n^x$ that surrounds $x$ (or possibly the outermost path from the boundary of $H_{\textrm{aux}}$ to itself in case $x$ is at 
distance smaller than $4C_1\log n$ from $\partial \Lambda_n$). 
With a slight abuse of notation, we alternatively view $\mathcal C_1$ as a cycle in the graph 
$H_{\textrm{aux}}$, a set of dominos in $\Lambda_n^x$, or a cycle in $G_n$ obtained by connecting the crossings associated to the sequence of admissible dominos forming the cycle in $H_{\textrm{aux}}$.  
Let also $\mathcal C_1^+$ be the enlarged cycle made of the points in $\Lambda_n^x$ at distance smaller than one from $\mathcal C_1$ (viewed here as a union of dominos).

We next define $\mathcal C_2$ as 
the outermost cycle surrounding $x$ in the restriction of $H_{\textrm{aux}}$ to the region enclosed by $\mathcal C_1^+$. Note that on $\mathcal U_n^x$, by repeating again this procedure, one can define inductively a sequence of disjoint cycles $\mathcal C_3,\dots, \mathcal C_{W/2}$ (from now on, we assume $W$ to be even since decreasing it by 1 does not modify the argument), which all surround $x$ and are at distance at least 1 from each other. Let us define the event $\widetilde{\mathcal U}_n^x$ that the cycles $\mathcal C_1,\dots,\mathcal C_{W/2}$ as constructed above exist. In particular, $\mathcal U_n^x \subseteq \widetilde{\mathcal  U}_n^x$; it will turn out later that it is more convenient to work with the latter for some reasons related to measurability of these events.

Now, for $i=1,\dots,W/2$, denote 
$$\mathcal B_{n,i}^x :=\widetilde{\mathcal U}_n^x\cap  \{\textrm{none of the paths from $x$ to $\mathbb R^2 \setminus \Lambda_n^x$ shares a vertex with $\mathcal  C_i$ in $G_n^x$}\},$$
and 
$$\mathcal B_n^x := \bigcap_{i=1}^{W/2}  \mathcal B_{n,i}^x. $$ 
Recall that on the event $\mathcal A_n$, the cycles $\mathcal C_1,\dots,\mathcal C_{W/2}$, are all part of the same connected component in $G_n$. 
Therefore, if $\mathcal A_n $ holds and the connected component of a point $X\in \mathrm{Po}(\lambda)\cap \Lambda_n$ reaches $\partial \Lambda_n^X$ in $G_n$ without being part of the connected component containing the cycles $\cC_1,\ldots, \cC_{W/2}$, then 
necessarily $\mathcal B_n^X$ must hold. Thus, our aim now is to bound the probability of these events.

We start by observing that a union bound and Lemma~\ref{campbell-mecke} yield 
$$\mathbb P\Big(\bigcup_{X\in \mathrm{Po}(\lambda)\cap \Lambda_n} \mathcal O_n^X\cap \mathcal B_n^X\Big) \le 
\mathbb E\Big[\sum_{X\in  \mathrm{Po}(\lambda)\cap \Lambda_n}  \mathds{1}_{ \mathcal O_n^X\cap\mathcal B_n^X}  \Big] = \lambda \int_{\Lambda_n} \mathbb P\big(\mathcal O_n^x\cap \mathcal B_n^x \big) \, dx, $$
and our goal is thus to bound the probabilities on the right-hand side from above.  

\begin{claim}\label{cl:1}
For any sufficiently large $C_1$ and for every $x\in \Lambda_n$, $\mathbb P(\mathcal O_n^x\cap \mathcal B_n^x) = o(n^{-1})$.
\end{claim}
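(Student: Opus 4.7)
The plan is to exploit the $W/2 = \Theta(C_1 \log n)$ nested cycles $\mathcal C_1, \ldots, \mathcal C_{W/2}$ surrounding $x$ as independent sprinkling opportunities, and to establish
$$ \mathbb P(\mathcal O_n^x \cap \mathcal B_n^x) \le (1-\rho)^{W/2} $$
for some absolute constant $\rho > 0$ that depends only on $\lambda$, $p$ (and the fixed constants $R$ and $K$). Since $W = \Theta(C_1 \log n)$, taking $C_1$ sufficiently large forces the right-hand side to be $o(n^{-1})$, which is exactly the required bound.

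To set this up I would use the decomposition $G(\lambda, p) = G(\lambda', p) \cup G(\lambda - \lambda', p)$ already employed in Step~4 of the proof of Proposition~\ref{lem box crossing}, and reveal the randomness in three stages. \emph{Stage (i):} reveal $G(\lambda', p)$ throughout $\Lambda_n^x$; this determines the admissible dominos of $H_{\mathrm{aux}}$ together with the $K$ disjoint reservoir cycles of $G(\lambda', p)$ around each intersection point of two neighboring admissible dominos (recall that the existence of these cycles is part of the definition of an $H_{\mathrm{aux}}$--edge). \emph{Stage (ii):} reveal the sprinkling process $G(\lambda - \lambda', p)$ everywhere \emph{outside} small fixed-radius buffer neighborhoods of these intersection points; this is enough information to decide which $H_{\mathrm{aux}}$--edges are present (up to the local sprinkling that will be performed in stage (iii)), and hence, via the outer-in procedure defining $\widetilde{\mathcal U}_n^x$, to identify the cycles $\mathcal C_1, \ldots, \mathcal C_{W/2}$. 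One also reveals, on $\mathcal O_n^x$, a path $\gamma$ from $x$ to $\partial \Lambda_n^x$ in $G_n^x$ (say the one discovered by a fixed exploration rule). \emph{Stage (iii):} finally sprinkle $G(\lambda-\lambda', p)$ inside the buffer neighborhoods.

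By planarity, on $\mathcal O_n^x$ the path $\gamma$ must geometrically cross each cycle $\mathcal C_i$ (viewed as a closed curve in $G_n$ surrounding $x$), and on $\mathcal B_{n,i}^x$ this crossing involves an edge of $\gamma$ and a disjoint edge of $\mathcal C_i$ whose four endpoints sit pairwise within Euclidean distance at most $3/2$, exactly by the triangle-inequality argument of Step~4 in the proof of Proposition~\ref{lem box crossing}. Using the stage-(iii) sprinkling in the corresponding buffer neighborhood, together with the reservoir cycles of $G(\lambda', p)$, one then produces a chain of edges connecting $\gamma$ to $\mathcal C_i$ with probability at least $\rho$, uniformly in the conditioning from stages (i)--(ii). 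Because $\mathcal C_i$ and $\mathcal C_j$ lie at Euclidean distance at least $1$ for $i \neq j$, the buffer neighborhoods used for different cycles are disjoint, so the $W/2$ stage-(iii) connection attempts are mutually independent, which yields the announced product bound.

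The main obstacle will be the measurability bookkeeping: one must arrange the exploration of the cycles $\mathcal C_i$ (from outside inward) and of the path $\gamma$ (from $x$ outward) so that both are measurable with respect to the information revealed in stages (i)--(ii), while leaving enough untouched randomness in the disjoint buffer neighborhoods to provide $W/2$ truly independent $\rho$--chances in stage (iii). In particular, the quarantine of the reservoir cycles must be preserved throughout, which is precisely why the sprinkling has to be \emph{local} rather than global, as emphasized in the introduction.
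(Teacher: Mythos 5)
Your overall strategy (a $(1-\rho)^{W/2}$ bound via one sprinkling opportunity per cycle, with $W=\Theta(C_1\log n)$) matches the paper's, but the way you localize the sprinkling does not work. You place the reserved (unrevealed) buffers at the \emph{intersection points of neighboring admissible dominos}, fixed after stage (i). However, the place where a connection between the component of $x$ and the cycle $\mathcal C_i$ has to be manufactured is wherever that component first approaches $\mathcal C_i$ — a random location that can lie anywhere along the cycle, typically in the middle of a domino crossing, far from any domino--domino intersection. By the time you know this location (you only learn it after revealing the path $\gamma$ in stage (ii)), you have already exposed all of $G(\lambda-\lambda',p)$ there, so no fresh randomness remains to build the connecting chain; and you cannot move the buffers to the crossing locations without making their definition circular. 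The paper resolves exactly this by an adapted construction: it explores outward from the region enclosed by $\mathcal C_1^+$ up to a stopping time $T$, identifies the first approach point $X_T$ and the nearest square $Q_T$ of $\mathcal C_1$, and sprinkles in $Q_T\cup S_T$, whose conditional law given $\mathcal F_T$ is still fresh Poisson randomness.

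Two further points need repair. First, the cycles $\mathcal C_i$ are defined through $H_{\mathrm{aux}}$, whose edges exist only if the local sprinkling at the intersection points succeeds; so conditioning on ``$\mathcal C_1$ exists'' is conditioning on an event involving precisely the stage-(iii) randomness you want to keep fresh. You flag this as ``measurability bookkeeping'' but do not resolve it; the paper's resolution is to observe that this conditioning is on an \emph{increasing} event in the sprinkled process and to invoke FKG (Lemma~\ref{lem harris}) so that it can only help the connection event $\mathcal E_n^x$. Second, the claim that the $W/2$ attempts are ``mutually independent'' because the buffers are disjoint is not justified: the events $\mathcal B_{n,i}^x$ all involve the same paths from $x$ and the globally determined cycles. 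The paper instead runs a sequential conditional argument, peeling off one cycle at a time and noting that the edges between the paths from $x$ and $\mathcal C_1$ are independent of those between the paths and $\mathcal C_2,\dots,\mathcal C_{W/2}$, which yields the product bound by induction rather than by independence of pre-declared local regions.
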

\begin{proof}[Proof of Claim~\ref{cl:1}]
Fix $x\in \Lambda_n$, and for simplicity assume that it is at distance at least $4C_1\log n$ from $\partial \Lambda_n$; 
the proof in the case of $x$ being closer to $\partial \Lambda_n$ is analogous. 
We first show that the probability that a path in $G_n^x$ between $x$ and $\mathbb R^2\setminus \Lambda_n^x$ does not intersect $\mathcal C_1$ is bounded away from $1$. For this,
note that $\mathcal C_1$ can be determined by exploring the restriction of $H_{\textrm{aux}}$ to $\Lambda_n^x$ by starting from its boundary. More precisely, it suffices to reveal only 
the connected components of the dual graph that touch the 
boundary of $H_{\textrm{aux}}\cap \Lambda_n^x$. 

Let $U_1$ denote the region inside $\Lambda_n^x$ which is enclosed by $\mathcal C_1$ (seen as a set of dominos). Then, conditionally on $\mathcal C_1$, the distribution of the point configuration in $U_1$ is still 
that of an independent Poisson Point Process with intensity $\lambda$. 
Moreover, on the event $\mathcal O_n^x$, the connected component of $x$
in $G_n^x\cap U_1$ contains at least one vertex at distance smaller than one from $\mathcal C_1$. 
We define the \textit{first one} of these vertices as follows. 

First, explore $G(\lambda, p)$ in the entire region $R_1\subseteq U_1$ enclosed by $\cC_1^+$. Then, for $t\ge 0$, let $B_t$ be the set of points at distance at most $t$ from $R_1$, and let 
$$T:=\inf\{t\ge 0 : \text{there exists }y\in \partial B_t \text{ such that }x\xleftrightarrow{G_n^x\cap B_t\,} y\}. $$
Note that $T$ is a stopping time with respect to the filtration $(\mathcal F_t)_{t\ge 0}$ defined by 
$$\mathcal F_t:=\sigma(\mathcal C_1, \, G_n^x \cap B_t), \text{ for all } t\ge 0.$$  
Moreover, as we already mentioned, one has 
\begin{equation}\label{eq inclusion}
\mathcal O_n^x \subseteq \{T<\infty\}. 
\end{equation}

\begin{figure}
\centering
\begin{tikzpicture}[scale=0.82,line cap=round,line join=round,x=1cm,y=1cm]
\clip(-12.1,-7.1) rectangle (14,7.1);
\draw [line width=0.8pt] (-9,5)-- (-9,3);
\draw [line width=0.8pt] (-9,3)-- (-8,3);
\draw [line width=0.8pt] (-8,3)-- (-8,5);
\draw [line width=0.8pt] (-8,5)-- (-9,5);
\draw [line width=0.8pt] (-8,4)-- (-10,4);
\draw [line width=0.8pt] (-10,3)-- (-8,3);
\draw [line width=0.8pt] (-10,4)-- (-10,3);
\draw [line width=0.8pt] (-10,3)-- (-10,2);
\draw [line width=0.8pt] (-11,3)-- (-9,3);
\draw [line width=0.8pt] (-11,3)-- (-11,2);
\draw [line width=0.8pt] (-11,2)-- (-11,1);
\draw [line width=0.8pt] (-11,1)-- (-10,1);
\draw [line width=0.8pt] (-10,1)-- (-10,2);
\draw [line width=0.8pt] (-10,1)-- (-9,1);
\draw [line width=0.8pt] (-10,1)-- (-10,0);
\draw [line width=0.8pt] (-10,0)-- (-9,0);
\draw [line width=0.8pt] (-9,0)-- (-9,1);
\draw [line width=0.8pt] (-8,0)-- (-9,0);
\draw [line width=0.8pt] (-9,0)-- (-9,-1);
\draw [line width=0.8pt] (-9,-1)-- (-8,-1);
\draw [line width=0.8pt] (-8,-1)-- (-8,0);
\draw [line width=0.8pt] (-7,-1)-- (-8,-1);
\draw [line width=0.8pt] (-8,-1)-- (-8,-2);
\draw [line width=0.8pt] (-8,-2)-- (-7,-2);
\draw [line width=0.8pt] (-7,-2)-- (-7,-1);
\draw [line width=0.8pt] (-6,-2)-- (-7,-2);
\draw [line width=0.8pt] (-7,-2)-- (-7,-3);
\draw [line width=0.8pt] (-7,-3)-- (-6,-3);
\draw [line width=0.8pt] (-6,-3)-- (-6,-2);
\draw [line width=0.8pt] (-5,-3)-- (-6,-3);
\draw [line width=0.8pt] (-6,-3)-- (-6,-4);
\draw [line width=0.8pt] (-6,-4)-- (-5,-4);
\draw [line width=0.8pt] (-5,-4)-- (-5,-3);
\draw [line width=0.8pt] (-5,-3)-- (-4,-3);
\draw [line width=0.8pt] (-4,-3)-- (-4,-4);
\draw [line width=0.8pt] (-4,-4)-- (-5,-4);
\draw [line width=0.8pt] (-5,-4)-- (-5,-5);
\draw [line width=0.8pt] (-5,-5)-- (-4,-5);
\draw [line width=0.8pt] (-4,-5)-- (-4,-4);
\draw [line width=0.8pt] (-4,-4)-- (-3,-4);
\draw [line width=0.8pt] (-3,-4)-- (-3,-5);
\draw [line width=0.8pt] (-3,-5)-- (-4,-5);
\draw [line width=0.8pt] (-4,-3)-- (-3,-3);
\draw [line width=0.8pt] (-3,-3)-- (-3,-4);
\draw [line width=0.8pt] (-3,-3)-- (-2,-3);
\draw [line width=0.8pt] (-2,-3)-- (-2,-4);
\draw [line width=0.8pt] (-3,-4)-- (-2,-4);
\draw [line width=0.8pt] (-2,-2)-- (-2,-3);
\draw [line width=0.8pt] (-2,-3)-- (-1,-3);
\draw [line width=0.8pt] (-1,-3)-- (-1,-2);
\draw [line width=0.8pt] (-2,-2)-- (-1,-2);
\draw [line width=0.8pt] (-2,-1)-- (-1,-1);
\draw [line width=0.8pt] (-1,-1)-- (0,-1);
\draw [line width=0.8pt] (0,-2)-- (-1,-2);
\draw [line width=0.8pt] (0,-2)-- (0,-3);
\draw [line width=0.8pt] (0,-3)-- (-1,-3);
\draw [line width=0.8pt] (1,-3)-- (0,-3);
\draw [line width=0.8pt] (0,-3)-- (0,-4);
\draw [line width=0.8pt] (0,-4)-- (1,-4);
\draw [line width=0.8pt] (1,-4)-- (1,-3);
\draw [line width=0.8pt] (2,-4)-- (1,-4);
\draw [line width=0.8pt] (1,-4)-- (1,-5);
\draw [line width=0.8pt] (1,-5)-- (2,-5);
\draw [line width=0.8pt] (2,-5)-- (2,-4);
\draw [line width=0.8pt] (3,-5)-- (2,-5);
\draw [line width=0.8pt] (2,-5)-- (2,-6);
\draw [line width=0.8pt] (2,-6)-- (3,-6);
\draw [line width=0.8pt] (3,-6)-- (3,-5);
\draw [line width=0.8pt] (3,-5)-- (4,-5);
\draw [line width=0.8pt] (4,-5)-- (4,-6);
\draw [line width=0.8pt] (4,-6)-- (3,-6);
\draw [line width=0.8pt] (4,-4)-- (4,-5);
\draw [line width=0.8pt] (4,-4)-- (5,-4);
\draw [line width=0.8pt] (5,-4)-- (5,-5);
\draw [line width=0.8pt] (5,-5)-- (4,-5);
\draw [line width=0.8pt] (5,-3)-- (5,-4);
\draw [line width=0.8pt] (5,-3)-- (6,-3);
\draw [line width=0.8pt] (6,-3)-- (6,-4);
\draw [line width=0.8pt] (6,-4)-- (5,-4);
\draw [line width=0.8pt] (6,-2)-- (6,-3);
\draw [line width=0.8pt] (6,-3)-- (7,-3);
\draw [line width=0.8pt] (7,-3)-- (7,-2);
\draw [line width=0.8pt] (7,-2)-- (6,-2);
\draw [line width=0.8pt] (7,-1)-- (7,-2);
\draw [line width=0.8pt] (7,-1)-- (8,-1);
\draw [line width=0.8pt] (8,-1)-- (8,-2);
\draw [line width=0.8pt] (8,-2)-- (7,-2);
\draw [line width=0.8pt] (7,-1)-- (7,0);
\draw [line width=0.8pt] (7,0)-- (8,0);
\draw [line width=0.8pt] (8,0)-- (8,-1);
\draw [line width=0.8pt] (7,0)-- (6,0);
\draw [line width=0.8pt] (6,0)-- (6,1);
\draw [line width=0.8pt] (6,1)-- (7,1);
\draw [line width=0.8pt] (7,1)-- (7,0);
\draw [line width=0.8pt] (6,1)-- (5,1);
\draw [line width=0.8pt] (5,1)-- (5,2);
\draw [line width=0.8pt] (5,2)-- (6,2);
\draw [line width=0.8pt] (6,2)-- (6,1);
\draw [line width=0.8pt] (5,2)-- (4,2);
\draw [line width=0.8pt] (4,2)-- (4,3);
\draw [line width=0.8pt] (4,3)-- (5,3);
\draw [line width=0.8pt] (5,3)-- (5,2);
\draw [line width=0.8pt] (4,3)-- (3,3);
\draw [line width=0.8pt] (3,3)-- (3,4);
\draw [line width=0.8pt] (3,4)-- (4,4);
\draw [line width=0.8pt] (4,4)-- (4,3);
\draw [line width=0.8pt] (3,4)-- (2,4);
\draw [line width=0.8pt] (2,4)-- (2,5);
\draw [line width=0.8pt] (2,5)-- (3,5);
\draw [line width=0.8pt] (3,5)-- (3,4);
\draw [line width=0.8pt] (2,5)-- (1,5);
\draw [line width=0.8pt] (1,5)-- (1,6);
\draw [line width=0.8pt] (1,6)-- (2,6);
\draw [line width=0.8pt] (2,6)-- (2,5);
\draw [line width=0.8pt] (1,6)-- (0,6);
\draw [line width=0.8pt] (0,6)-- (0,5);
\draw [line width=0.8pt] (0,5)-- (1,5);
\draw [line width=0.8pt] (0,6)-- (0,7);
\draw [line width=0.8pt] (0,7)-- (1,7);
\draw [line width=0.8pt] (1,7)-- (1,6);
\draw [line width=0.8pt] (0,7)-- (-1,7);
\draw [line width=0.8pt] (-1,7)-- (-1,6);
\draw [line width=0.8pt] (-1,6)-- (0,6);
\draw [line width=0.8pt] (-1,6)-- (-1,5);
\draw [line width=0.8pt] (-1,5)-- (0,5);
\draw [line width=0.8pt] (-1,6)-- (-2,6);
\draw [line width=0.8pt] (-2,6)-- (-2,5);
\draw [line width=0.8pt] (-2,5)-- (-1,5);
\draw [line width=0.8pt] (-2,5)-- (-2,4);
\draw [line width=0.8pt] (-2,5)-- (-3,5);
\draw [line width=0.8pt] (-3,5)-- (-3,4);
\draw [line width=0.8pt] (-3,4)-- (-2,4);
\draw [line width=0.8pt] (-3,4)-- (-3,3);
\draw [line width=0.8pt] (-3,4)-- (-4,4);
\draw [line width=0.8pt] (-4,4)-- (-4,3);
\draw [line width=0.8pt] (-4,3)-- (-3,3);
\draw [line width=0.8pt] (-4,3)-- (-4,2);
\draw [line width=0.8pt] (-8,5)-- (-7,5);
\draw [line width=0.8pt] (-7,5)-- (-7,4);
\draw [line width=0.8pt] (-7,4)-- (-8,4);
\draw [line width=0.8pt] (-4,3)-- (-5,3);
\draw [line width=0.8pt] (-5,3)-- (-5,2);
\draw [line width=0.8pt] (-5,2)-- (-4,2);
\draw [line width=0.8pt] (-7,4)-- (-7,3);
\draw [line width=0.8pt] (-7,3)-- (-8,3);
\draw [line width=0.8pt] (-7,4)-- (-6,4);
\draw [line width=0.8pt] (-6,4)-- (-6,3);
\draw [line width=0.8pt] (-6,3)-- (-7,3);
\draw [line width=0.8pt] (-6,2)-- (-6,3);
\draw [line width=0.8pt] (-6,3)-- (-5,3);
\draw [line width=0.8pt] (-6,2)-- (-5,2);
\draw [line width=0.8pt] (-6,1)-- (-5,1);
\draw [line width=0.8pt] (-4,1)-- (-5,1);
\draw [line width=0.8pt] (-1,1)-- (-0.5227866014611782,0.8265746986137462);
\draw [line width=0.8pt] (-0.5227866014611782,0.8265746986137462)-- (-0.5676971253078594,1.3205904609272374);
\draw [line width=0.8pt] (-0.5676971253078594,1.3205904609272374)-- (0,1);
\draw [line width=0.8pt] (0,1)-- (-0.268293632996651,0.5571115555336601);
\draw [line width=0.8pt] (-0.268293632996651,0.5571115555336601)-- (0.3604537008568866,0.4972308570714188);
\draw [line width=0.8pt] (0.3604537008568866,0.4972308570714188)-- (0.6448870185525346,1.0211869686160306);
\draw [line width=0.8pt] (0.6448870185525346,1.0211869686160306)-- (0.46524492316580957,1.5601132547762029);
\draw [line width=0.8pt] (0.46524492316580957,1.5601132547762029)-- (1.3484852254838744,1.8295763978562891);
\draw [line width=0.8pt] (1.3484852254838744,1.8295763978562891)-- (1,2);
\draw [line width=0.8pt] (1,2)-- (0.9001177148635403,2.2652819784377423);
\draw [line width=0.8pt] (0.9001177148635403,2.2652819784377423)-- (1.2239941791889504,2.5729646195468736);
\draw [line width=0.8pt] (1.2239941791889504,2.5729646195468736)-- (1.4237179988562867,2.1519252159238516);
\draw [line width=0.8pt] (1.4237179988562867,2.1519252159238516)-- (1.9041347542723117,1.9468034551844307);
\draw [line width=0.8pt] (-6,4)-- (-5,4);
\draw [line width=0.8pt] (-5,4)-- (-5,3);
\draw [line width=0.8pt] (-5,4)-- (-4,4);
\draw [line width=0.8pt,color=blue] (-5.506210441055699,2.5043443878652885)-- (-4.5,2.5);
\draw [line width=0.8pt,color=blue] (-4.5,2.5)-- (-4.5,3.5);
\draw [line width=0.8pt,color=blue] (-4.5,3.5)-- (-3.5,3.5);
\draw [line width=0.8pt,color=blue] (-3.5,3.5)-- (-3.5,4.5);
\draw [line width=0.8pt,color=blue] (-3.5,4.5)-- (-2.5,4.5);
\draw [shift={(-6,2)},line width=0.8pt]  plot[domain=3.141592653589793:4.71238898038469,variable=\t]({1*1*cos(\t r)+0*1*sin(\t r)},{0*1*cos(\t r)+1*1*sin(\t r)});
\draw [line width=0.8pt,color=blue] (-5.506210441055699,2.5043443878652885)-- (-5.5,3.5);
\draw [line width=0.8pt,color=blue] (-5.5,3.5)-- (-6.5,3.5);
\draw [line width=0.8pt,color=blue] (-6.5,3.5)-- (-6.5,4.5);
\draw [line width=0.8pt,color=blue] (-6.5,4.5)-- (-7.5,4.5);
\draw [line width=0.8pt,color=blue] (-7.5,4.5)-- (-7.5,3.5);
\draw [line width=0.8pt,color=blue] (-7.5,3.5)-- (-8.5,3.5);
\draw [line width=0.8pt,color=blue] (-8.5,3.5)-- (-8.5,4.5);
\draw [line width=0.8pt,color=blue] (-8.5,4.5)-- (-9.5,4.5);
\draw [line width=0.8pt,color=blue] (-9.5,4.5)-- (-9.5,3.5);
\draw [line width=0.8pt,color=blue] (-9.5,3.5)-- (-10.5,3.5);
\draw [line width=0.8pt,color=blue] (-10.5,3.5)-- (-10.5,2.5);
\draw [line width=0.8pt,color=blue] (-10.5,2.5)-- (-11.5,2.5);
\draw [line width=0.8pt,color=blue] (-11.5,2.5)-- (-11.5,1.5);
\draw [line width=0.8pt,color=blue] (-11.5,1.5)-- (-10.5,1.5);
\draw [line width=0.8pt,color=blue] (-10.5,1.5)-- (-10.5,0.5);
\draw [line width=0.8pt,color=blue] (-10.5,0.5)-- (-9.5,0.5);
\draw [line width=0.8pt,color=blue] (-9.5,0.5)-- (-9.5,-0.5);
\draw [line width=0.8pt,color=blue] (-9.5,-0.5)-- (-8.5,-0.5);
\draw [line width=0.8pt,color=blue] (-8.5,-0.5)-- (-8.5,-1.5);
\draw [line width=0.8pt,color=blue] (-8.5,-1.5)-- (-7.5,-1.5);
\draw [line width=0.8pt,color=blue] (-7.5,-1.5)-- (-7.5,-2.5);
\draw [line width=0.8pt,color=blue] (-7.5,-2.5)-- (-6.5,-2.5);
\draw [line width=0.8pt,color=blue] (-6.5,-2.5)-- (-6.5,-3.5);
\draw [line width=0.8pt,color=blue] (-6.5,-3.5)-- (-5.5,-3.5);
\draw [line width=0.8pt,color=blue] (-5.5,-3.5)-- (-5.5,-4.5);
\draw [line width=0.8pt,color=blue] (-5.5,-4.5)-- (-4.5,-4.5);
\draw [line width=0.8pt,color=blue] (-4.5,-4.5)-- (-4.5,-3.5);
\draw [line width=0.8pt,color=blue] (-4.5,-3.5)-- (-3.5,-3.5);
\draw [line width=0.8pt,color=blue] (-3.5,-3.5)-- (-3.5,-4.5);
\draw [line width=0.8pt,color=blue] (-3.5,-4.5)-- (-2.5,-4.5);
\draw [line width=0.8pt,color=blue] (-2.5,-4.5)-- (-2.5,-3.5);
\draw [line width=0.8pt,color=blue] (-2.5,-3.5)-- (-1.5,-3.5);
\draw [line width=0.8pt,color=blue] (-1.5,-3.5)-- (-1.5,-2.5);
\draw [line width=0.8pt,color=blue] (-1.5,-2.5)-- (-0.5,-2.5);
\draw [line width=0.8pt,color=blue] (-0.5,-2.5)-- (-0.5,-3.5);
\draw [line width=0.8pt,color=blue] (-0.5,-3.5)-- (0.5,-3.5);
\draw [line width=0.8pt,color=blue] (0.5,-3.5)-- (0.5,-4.5);
\draw [line width=0.8pt,color=blue] (0.5,-4.5)-- (1.5,-4.5);
\draw [line width=0.8pt,color=blue] (1.5,-4.5)-- (1.5,-5.5);
\draw [line width=0.8pt,color=blue] (1.5,-5.5)-- (2.5,-5.5);
\draw [line width=0.8pt,color=blue] (2.5,-5.5)-- (2.5,-6.5);
\draw [line width=0.8pt,color=blue] (2.5,-6.5)-- (3.5,-6.5);
\draw [line width=0.8pt,color=blue] (3.5,-6.5)-- (3.5,-5.5);
\draw [line width=0.8pt,color=blue] (3.5,-5.5)-- (4.5,-5.5);
\draw [line width=0.8pt,color=blue] (4.5,-5.5)-- (4.5,-4.5);
\draw [line width=0.8pt,color=blue] (4.5,-4.5)-- (5.5,-4.5);
\draw [line width=0.8pt,color=blue] (5.5,-4.5)-- (5.5,-3.5);
\draw [line width=0.8pt,color=blue] (5.5,-3.5)-- (6.5,-3.5);
\draw [line width=0.8pt,color=blue] (6.5,-3.5)-- (6.5,-2.5);
\draw [line width=0.8pt,color=blue] (6.5,-2.5)-- (7.5,-2.5);
\draw [line width=0.8pt,color=blue] (7.5,-2.5)-- (7.5,-1.5);
\draw [line width=0.8pt,color=blue] (7.5,-1.5)-- (8.5,-1.5);
\draw [line width=0.8pt,color=blue] (8.5,-1.5)-- (8.5,-0.5);
\draw [line width=0.8pt,color=blue] (8.5,-0.5)-- (7.5,-0.5);
\draw [line width=0.8pt,color=blue] (7.5,-0.5)-- (7.5,0.5);
\draw [line width=0.8pt,color=blue] (7.5,0.5)-- (6.5,0.5);
\draw [line width=0.8pt,color=blue] (6.5,0.5)-- (6.5,1.5);
\draw [line width=0.8pt,color=blue] (6.5,1.5)-- (5.5,1.5);
\draw [line width=0.8pt,color=blue] (5.5,1.5)-- (5.5,2.5);
\draw [line width=0.8pt,color=blue] (5.5,2.5)-- (4.5,2.5);
\draw [line width=0.8pt,color=blue] (4.5,2.5)-- (4.5,3.5);
\draw [line width=0.8pt,color=blue] (4.5,3.5)-- (3.5,3.5);
\draw [line width=0.8pt,color=blue] (3.5,3.5)-- (3.5,4.5);
\draw [line width=0.8pt,color=blue] (3.5,4.5)-- (2.5,4.5);
\draw [line width=0.8pt,color=blue] (2.5,4.5)-- (2.5,5.5);
\draw [line width=0.8pt,color=blue] (2.5,5.5)-- (1.5,5.5);
\draw [line width=0.8pt,color=blue] (1.5,5.5)-- (1.5,6.5);
\draw [line width=0.8pt,color=blue] (1.5,6.5)-- (0.5,6.5);
\draw [line width=0.8pt,color=blue] (0.5,6.5)-- (0.5,5.5);
\draw [line width=0.8pt,color=blue] (0.5,5.5)-- (-0.5,5.5);
\draw [line width=0.8pt,color=blue] (-0.5,5.5)-- (-0.5,6.5);
\draw [line width=0.8pt,color=blue] (-0.5,6.5)-- (-1.5,6.5);
\draw [line width=0.8pt,color=blue] (-1.5,6.5)-- (-1.5,5.5);
\draw [line width=0.8pt,color=blue] (-1.5,5.5)-- (-2.5,5.5);
\draw [line width=0.8pt,color=blue] (-2.5,5.5)-- (-2.5,4.5);
\draw [line width=0.8pt] (1,7)-- (2,7);
\draw [line width=0.8pt] (2,7)-- (2,6);
\draw [line width=0.8pt] (2,6)-- (3,6);
\draw [line width=0.8pt] (3,6)-- (3,5);
\draw [line width=0.8pt] (3,5)-- (4,5);
\draw [line width=0.8pt] (4,5)-- (4,4);
\draw [line width=0.8pt] (4,4)-- (5,4);
\draw [line width=0.8pt] (5,4)-- (5,3);
\draw [line width=0.8pt] (5,3)-- (6,3);
\draw [line width=0.8pt] (6,3)-- (6,2);
\draw [line width=0.8pt] (6,2)-- (7,2);
\draw [line width=0.8pt] (7,2)-- (7,1);
\draw [line width=0.8pt] (7,1)-- (8,1);
\draw [line width=0.8pt] (-1,7)-- (-2,7);
\draw [line width=0.8pt] (-2,7)-- (-2,6);
\draw [line width=0.8pt] (-2,6)-- (-3,6);
\draw [line width=0.8pt] (-3,6)-- (-3,5);
\draw [line width=0.8pt] (-3,5)-- (-4,5);
\draw [line width=0.8pt] (-4,5)-- (-4,4);
\draw [line width=0.8pt] (8,1)-- (8,0);
\draw [line width=0.8pt] (8,0)-- (9,0);
\draw [line width=0.8pt] (9,0)-- (9,-1);
\draw [line width=0.8pt] (9,-1)-- (8,-1);
\draw [line width=0.8pt] (9,-1)-- (9,-2);
\draw [line width=0.8pt] (9,-2)-- (8,-2);
\draw [line width=0.8pt] (8,-2)-- (8,-3);
\draw [line width=0.8pt] (8,-3)-- (7,-3);
\draw [line width=0.8pt] (7,-3)-- (7,-4);
\draw [line width=0.8pt] (7,-4)-- (6,-4);
\draw [line width=0.8pt] (6,-4)-- (6,-5);
\draw [line width=0.8pt] (6,-5)-- (5,-5);
\draw [line width=0.8pt] (5,-5)-- (5,-6);
\draw [line width=0.8pt] (5,-6)-- (4,-6);
\draw [line width=0.8pt] (-6,4)-- (-6,5);
\draw [line width=0.8pt] (-6,5)-- (-7,5);
\draw [line width=0.8pt] (-9,5)-- (-10,5);
\draw [line width=0.8pt] (-10,5)-- (-10,4);
\draw [line width=0.8pt] (-10,4)-- (-11,4);
\draw [line width=0.8pt] (-11,4)-- (-11,3);
\draw [line width=0.8pt] (-11,3)-- (-12,3);
\draw [line width=0.8pt] (-12,3)-- (-12,2);
\draw [line width=0.8pt] (-12,2)-- (-12,1);
\draw [line width=0.8pt] (-12,1)-- (-11,1);
\draw [line width=0.8pt] (-12,2)-- (-11,2);
\draw [line width=0.8pt] (-6,-4)-- (-7,-4);
\draw [line width=0.8pt] (-7,-4)-- (-7,-3);
\draw [line width=0.8pt] (-7,-3)-- (-8,-3);
\draw [line width=0.8pt] (-8,-3)-- (-8,-2);
\draw [line width=0.8pt] (-8,-2)-- (-9,-2);
\draw [line width=0.8pt] (-9,-2)-- (-9,-1);
\draw [line width=0.8pt] (-9,-1)-- (-10,-1);
\draw [line width=0.8pt] (-10,-1)-- (-10,0);
\draw [line width=0.8pt] (-11,1)-- (-11,0);
\draw [line width=0.8pt] (-11,0)-- (-10,0);
\draw [line width=0.8pt] (-6,-4)-- (-6,-5);
\draw [line width=0.8pt] (-6,-5)-- (-5,-5);
\draw [line width=0.8pt] (2,-6)-- (2,-7);
\draw [line width=0.8pt] (2,-7)-- (3,-7);
\draw [line width=0.8pt] (3,-7)-- (3,-6);
\draw [line width=0.8pt] (4,-6)-- (4,-7);
\draw [line width=0.8pt] (4,-7)-- (3,-7);
\draw [shift={(-2,4)},line width=0.8pt]  plot[domain=-1.5707963267948966:0,variable=\t]({1*1*cos(\t r)+0*1*sin(\t r)},{0*1*cos(\t r)+1*1*sin(\t r)});
\draw [shift={(2,4)},line width=0.8pt]  plot[domain=3.141592653589793:4.71238898038469,variable=\t]({1*1*cos(\t r)+0*1*sin(\t r)},{0*1*cos(\t r)+1*1*sin(\t r)});
\draw [line width=0.8pt] (-1,4)-- (1,4);
\draw [shift={(-3,3)},line width=0.8pt]  plot[domain=-1.5707963267948966:0,variable=\t]({1*1*cos(\t r)+0*1*sin(\t r)},{0*1*cos(\t r)+1*1*sin(\t r)});
\draw [shift={(-4,2)},line width=0.8pt]  plot[domain=-1.5707963267948966:0,variable=\t]({1*1*cos(\t r)+0*1*sin(\t r)},{0*1*cos(\t r)+1*1*sin(\t r)});
\draw [line width=0.8pt] (-11,2)-- (-10,2);
\draw [line width=0.8pt] (-7,2)-- (-9,2);
\draw [shift={(-9,1)},line width=0.8pt]  plot[domain=0:1.5707963267948966,variable=\t]({1*1*cos(\t r)+0*1*sin(\t r)},{0*1*cos(\t r)+1*1*sin(\t r)});
\draw [shift={(-8,0)},line width=0.8pt]  plot[domain=0:1.5787138138642733,variable=\t]({1*1*cos(\t r)+0*1*sin(\t r)},{0*1*cos(\t r)+1*1*sin(\t r)});
\draw [shift={(-7,-1)},line width=0.8pt]  plot[domain=0:1.5707963267948966,variable=\t]({1*1*cos(\t r)+0*1*sin(\t r)},{0*1*cos(\t r)+1*1*sin(\t r)});
\draw [shift={(-6,-2)},line width=0.8pt]  plot[domain=0:1.5707963267948966,variable=\t]({1*1*cos(\t r)+0*1*sin(\t r)},{0*1*cos(\t r)+1*1*sin(\t r)});
\draw [line width=0.8pt] (-5,-2)-- (-4,-2);
\draw [line width=0.8pt] (-4,-2)-- (-3,-2);
\draw [shift={(-2,-2)},line width=0.8pt]  plot[domain=1.5707963267948966:3.141592653589793,variable=\t]({1*1*cos(\t r)+0*1*sin(\t r)},{0*1*cos(\t r)+1*1*sin(\t r)});
\draw [shift={(0,-2)},line width=0.8pt]  plot[domain=0:1.5707963267948966,variable=\t]({1*1*cos(\t r)+0*1*sin(\t r)},{0*1*cos(\t r)+1*1*sin(\t r)});
\draw [shift={(1,-3)},line width=0.8pt]  plot[domain=0:1.5707963267948966,variable=\t]({1*1*cos(\t r)+0*1*sin(\t r)},{0*1*cos(\t r)+1*1*sin(\t r)});
\draw [shift={(2,-4)},line width=0.8pt]  plot[domain=0:1.5707963267948966,variable=\t]({1*1*cos(\t r)+0*1*sin(\t r)},{0*1*cos(\t r)+1*1*sin(\t r)});
\draw [line width=0.8pt] (0,-4)-- (0,-5);
\draw [line width=0.8pt] (0,-5)-- (1,-5);
\draw [shift={(4,-4)},line width=0.8pt]  plot[domain=1.5707963267948966:3.141592653589793,variable=\t]({1*1*cos(\t r)+0*1*sin(\t r)},{0*1*cos(\t r)+1*1*sin(\t r)});
\draw [shift={(5,-3)},line width=0.8pt]  plot[domain=1.5707963267948966:3.141592653589793,variable=\t]({1*1*cos(\t r)+0*1*sin(\t r)},{0*1*cos(\t r)+1*1*sin(\t r)});
\draw [shift={(6,-2)},line width=0.8pt]  plot[domain=1.5707963267948966:3.141592653589793,variable=\t]({1*1*cos(\t r)+0*1*sin(\t r)},{0*1*cos(\t r)+1*1*sin(\t r)});
\draw [shift={(6,0)},line width=0.8pt]  plot[domain=3.141592653589793:4.71238898038469,variable=\t]({1*1*cos(\t r)+0*1*sin(\t r)},{0*1*cos(\t r)+1*1*sin(\t r)});
\draw [shift={(5,1)},line width=0.8pt]  plot[domain=3.141592653589793:4.71238898038469,variable=\t]({1*1*cos(\t r)+0*1*sin(\t r)},{0*1*cos(\t r)+1*1*sin(\t r)});
\draw [shift={(4,2)},line width=0.8pt]  plot[domain=3.141592653589793:4.71238898038469,variable=\t]({1*1*cos(\t r)+0*1*sin(\t r)},{0*1*cos(\t r)+1*1*sin(\t r)});
\draw [shift={(3,3)},line width=0.8pt]  plot[domain=3.141592653589793:4.71238898038469,variable=\t]({1*1*cos(\t r)+0*1*sin(\t r)},{0*1*cos(\t r)+1*1*sin(\t r)});
\draw [line width=0.8pt,color=red] (-2,-0.8)-- (0,-0.8);
\draw [line width=0.8pt,color=red] (0.8167840433800762,3.8)-- (-0.8167840433800762,3.8);
\draw [shift={(-2,4)},line width=0.8pt,color=red]  plot[domain=4.867649897717572:6.1157372279598965,variable=\t]({1*1.2*cos(\t r)+0*1.2*sin(\t r)},{0*1.2*cos(\t r)+1*1.2*sin(\t r)});
\draw [shift={(-3,3)},line width=0.8pt,color=red]  plot[domain=4.867649897717573:6.127924389846702,variable=\t]({1*1.2*cos(\t r)+0*1.2*sin(\t r)},{0*1.2*cos(\t r)+1*1.2*sin(\t r)});
\draw [shift={(-4,2)},line width=0.8pt,color=red]  plot[domain=4.71238898038469:6.1279243898467035,variable=\t]({1*1.2*cos(\t r)+0*1.2*sin(\t r)},{0*1.2*cos(\t r)+1*1.2*sin(\t r)});
\draw [shift={(-6,2)},line width=0.8pt,color=red]  plot[domain=3.309040732809486:4.71238898038469,variable=\t]({1*1.2*cos(\t r)+0*1.2*sin(\t r)},{0*1.2*cos(\t r)+1*1.2*sin(\t r)});
\draw [line width=0.8pt,color=red] (-6,0.8)-- (-4,0.8);
\draw [line width=0.8pt,color=red] (-7.183215956619923,1.8)-- (-8.105572809000078,1.8);
\draw [shift={(-9,1)},line width=0.8pt,color=red]  plot[domain=0.1552609173328838:0.7297276562269601,variable=\t]({1*1.2*cos(\t r)+0*1.2*sin(\t r)},{0*1.2*cos(\t r)+1*1.2*sin(\t r)});
\draw [shift={(-8,0)},line width=0.8pt,color=red]  plot[domain=0.1552609173328838:1.415535409462013,variable=\t]({1*1.2*cos(\t r)+0*1.2*sin(\t r)},{0*1.2*cos(\t r)+1*1.2*sin(\t r)});
\draw [shift={(-7,-1)},line width=0.8pt,color=red]  plot[domain=0.15526091733288314:1.415535409462013,variable=\t]({1*1.2*cos(\t r)+0*1.2*sin(\t r)},{0*1.2*cos(\t r)+1*1.2*sin(\t r)});
\draw [shift={(-6,-2)},line width=0.8pt,color=red]  plot[domain=0.1674480792196893:1.4155354094620134,variable=\t]({1*1.2*cos(\t r)+0*1.2*sin(\t r)},{0*1.2*cos(\t r)+1*1.2*sin(\t r)});
\draw [line width=0.8pt,color=red] (-4.816784043380077,-1.8)-- (-3.183215956619923,-1.8);
\draw [shift={(-2,-2)},line width=0.8pt,color=red]  plot[domain=1.5707963267948966:2.974144574370104,variable=\t]({1*1.2*cos(\t r)+0*1.2*sin(\t r)},{0*1.2*cos(\t r)+1*1.2*sin(\t r)});
\draw [shift={(0,-2)},line width=0.8pt,color=red]  plot[domain=0.1552609173328845:1.5707963267948966,variable=\t]({1*1.2*cos(\t r)+0*1.2*sin(\t r)},{0*1.2*cos(\t r)+1*1.2*sin(\t r)});
\draw [shift={(1,-3)},line width=0.8pt,color=red]  plot[domain=0.15526091733288439:1.415535409462013,variable=\t]({1*1.2*cos(\t r)+0*1.2*sin(\t r)},{0*1.2*cos(\t r)+1*1.2*sin(\t r)});
\draw [shift={(2,-4)},line width=0.8pt,color=red]  plot[domain=0.585685543457152:1.4155354094620123,variable=\t]({1*1.2*cos(\t r)+0*1.2*sin(\t r)},{0*1.2*cos(\t r)+1*1.2*sin(\t r)});
\draw [shift={(4,-4)},line width=0.8pt,color=red]  plot[domain=1.7260572441277793:2.555907110132641,variable=\t]({1*1.2*cos(\t r)+0*1.2*sin(\t r)},{0*1.2*cos(\t r)+1*1.2*sin(\t r)});
\draw [shift={(5,-3)},line width=0.8pt,color=red]  plot[domain=1.72605724412778:2.986331736256909,variable=\t]({1*1.2*cos(\t r)+0*1.2*sin(\t r)},{0*1.2*cos(\t r)+1*1.2*sin(\t r)});
\draw [shift={(6,-2)},line width=0.8pt,color=red]  plot[domain=2.156481870252046:2.98633173625691,variable=\t]({1*1.2*cos(\t r)+0*1.2*sin(\t r)},{0*1.2*cos(\t r)+1*1.2*sin(\t r)});
\draw [shift={(4,2)},line width=0.8pt,color=red]  plot[domain=3.2968535709226763:4.557128063051806,variable=\t]({1*1.2*cos(\t r)+0*1.2*sin(\t r)},{0*1.2*cos(\t r)+1*1.2*sin(\t r)});
\draw [shift={(5,1)},line width=0.8pt,color=red]  plot[domain=3.2968535709226763:4.557128063051805,variable=\t]({1*1.2*cos(\t r)+0*1.2*sin(\t r)},{0*1.2*cos(\t r)+1*1.2*sin(\t r)});
\draw [shift={(6,0)},line width=0.8pt,color=red]  plot[domain=3.296853570922679:4.12670343692754,variable=\t]({1*1.2*cos(\t r)+0*1.2*sin(\t r)},{0*1.2*cos(\t r)+1*1.2*sin(\t r)});
\draw [shift={(2,4)},line width=0.8pt,color=red]  plot[domain=3.3090407328094824:4.557128063051806,variable=\t]({1*1.2*cos(\t r)+0*1.2*sin(\t r)},{0*1.2*cos(\t r)+1*1.2*sin(\t r)});
\draw [shift={(3,3)},line width=0.8pt,color=red]  plot[domain=3.2968535709226776:4.557128063051804,variable=\t]({1*1.2*cos(\t r)+0*1.2*sin(\t r)},{0*1.2*cos(\t r)+1*1.2*sin(\t r)});
\draw [line width=0.8pt] (1.9041347542723117,1.9468034551844307)-- (1.9952219584748,2.3439351501042127);
\draw [line width=0.8pt] (-3,-5)-- (-2,-5);
\draw [line width=0.8pt] (-2,-5)-- (-2,-4);
\draw [line width=0.8pt] (-2,-4)-- (-1,-4);
\draw [line width=0.8pt] (-1,-4)-- (0,-4);
\draw [line width=0.8pt] (-1,-3)-- (-1,-4);
\draw [line width=0.8pt] (1,-5)-- (1,-6);
\draw [line width=0.8pt] (1,-6)-- (2,-6);
\begin{scriptsize}
\draw[color=black] (3.35,3.3) node {$Q_T$};

\draw [fill=black] (-1,1) circle (1pt);
\draw[color=black] (-1.2,1.1) node {$x$};

\draw [fill=black] (-5.500892181475667,0.9146077829851244) circle (1pt);
\draw [fill=black] (-2.769991007314686,-1.201469204755225) circle (1pt);
\draw [fill=black] (4,0.5) circle (1pt);

\draw [fill=black] (1.9952219584748,2.3439351501042127) circle (1pt);
\draw[color=black] (1.65,2.4) node {$X_T$};
\end{scriptsize}
\end{tikzpicture}
\caption{The figure depicts $x$, $X_T$ and the square $Q_T$. The dominos in the cycle $\mathcal{C}_1$ are the ones intersecting the blue curve. The black contour corresponds to the boundary of $\mathcal{C}_1^+$ while the red contour is the boundary of $B_T$.}
\label{fig:1'}
\end{figure}
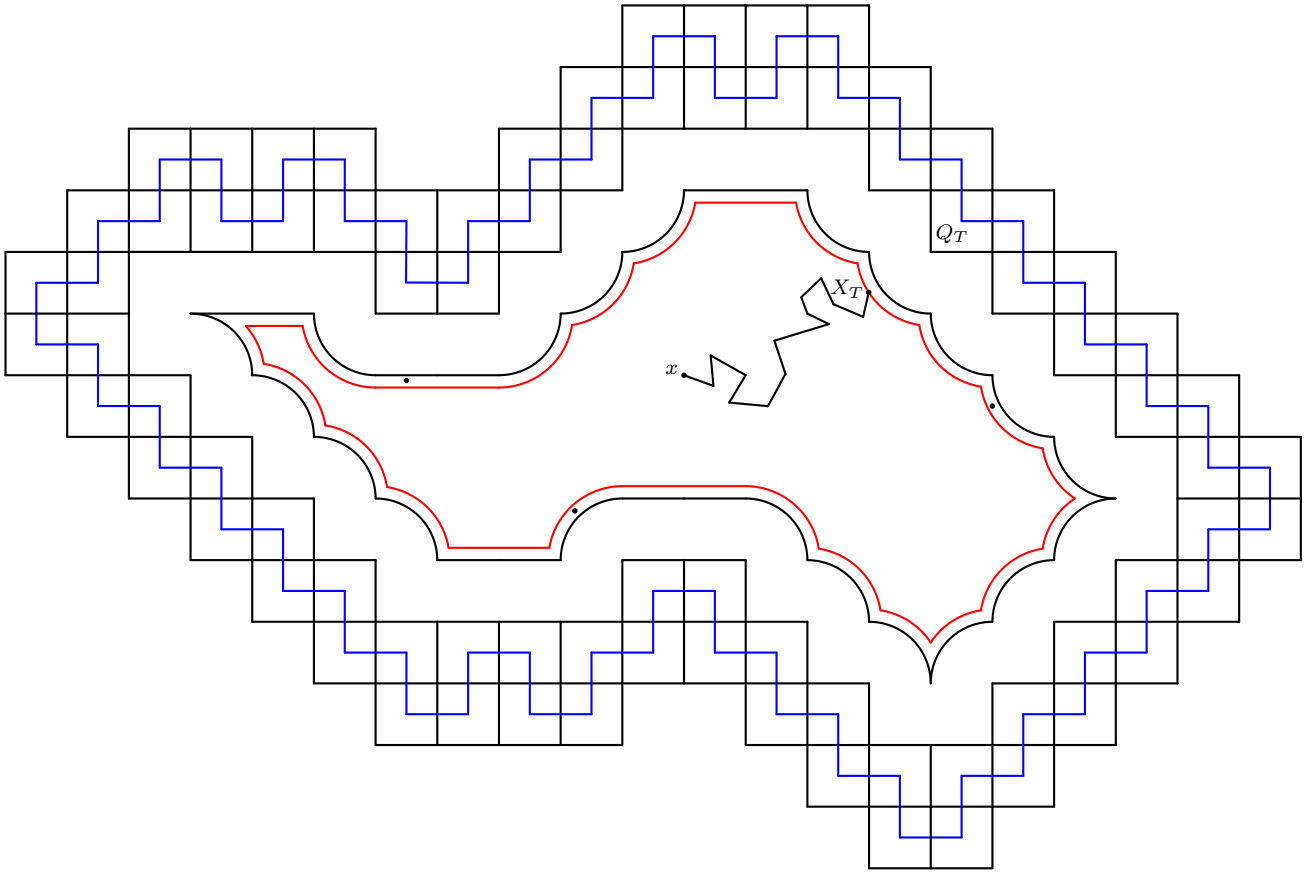

Let $X_T$ be the (almost surely unique) point on $\partial B_T$ which is connected to $x$ in $G_n^x\cap B_T$. Let $Q_T$ be the (almost surely unique) closest $R\times R$ square from $X_T$  which is part of  
$\mathcal C_1$ (when viewing $\mathcal C_1$ as a succession of $R\times R$ squares by dividing every domino in half), and let $S_T : =\Lambda_{X_T}(1) \cap (U_1\setminus B_T)$ be the part of the box $\Lambda_{X_T}(1)$ which is still unexplored when only $\mathcal C_1$ and $B_T$ has
been revealed, see Figure~\ref{fig:1'}. We now define an event $\mathcal E_n^x$ on which $X_T$ connects to $\mathcal C_1$. Firstly, recall that we set $\lambda' = \frac{\lambda + \lambda_c(p)}{2}$, 
and we view $\mathrm{Po}(\lambda)$ as the union of two independent Poisson Point Processes with intensities $\lambda'$ and $\lambda- \lambda'$, respectively. Then, consider a tessellation into squares of side length $1/\sqrt 5$ of $Q_T$, and let $\mathcal E_n^x=\bigcap_{i\leq 3}\mathcal E_{n,i}^x$ be the event that the following 
three conditions are satisfied: 
\begin{itemize}
\item $\mathcal E_{n,1}^x$: There exists a set $\mathcal P\subseteq \mathrm{Po}(\lambda-\lambda')$ containing exactly one point in every square of the tessellation, and such that every pair of points of $\mathcal P$ in adjacent squares is connected by an edge in $G(\lambda-\lambda', p)$ (points in adjacent squares are always at distance smaller than one by construction). 
\item $\mathcal E_{n,2}^x$: One of the points in $\mathcal P$ is connected by an edge between $G(\lambda-\lambda', p)$ and $G(\lambda', p)$ to the path associated to one of the dominos in $\mathcal C_1$ containing $Q_T$ (we recall that by definition these paths are part of $G(\lambda',p)$). 
\item $\mathcal E_{n,3}^x$: If $X_T$ is at distance smaller than $0.1$ from $Q_T$, we ask that it is connected by an edge in $G(\lambda-\lambda', p)$ to the closest point of $\mathcal P$ (this is possible since $0.1 + \sqrt{2/5} < 1$, where $\sqrt{2/5}$ is the length of a diagonal of a square in the tessellation).
If not, we require that in $S_T$ there is a point of $\mathrm{Po}(\lambda-\lambda')$ that is connected by an edge in $G(\lambda-\lambda',p)$ to both $X_T$ and a  point of $\mathcal P$.
\end{itemize}
Note that, conditionally on $G(\lambda',p)$, $\mathcal E_n^x$ is an increasing event which is measurable with respect to $G(\lambda-\lambda',p)$. Moreover, by construction, conditionally on the event $\mathcal E_n^x$, $X_T$ is connected to $\mathcal C_1$ (seen here as a cycle in the original graph $G_n$) in $G(\lambda,p)$. We claim that there exists $\varepsilon>0$ (only depending on $R$, $\lambda$ and $p$) such that almost surely
\begin{equation}\label{claim.Enx}
\mathbb P( \mathcal E_n^x \mid \mathcal F_T)\cdot \mathds{1}_{\widetilde{\mathcal U}_n^x\cap \{T<\infty\}} \ge \varepsilon \cdot \mathds{1}_{\widetilde{\mathcal U}_n^x\cap \{T<\infty\}}.
\end{equation}
(Note that, while $T$ is clearly $\cF_T$-measurable, $\widetilde{\mathcal U}_n^x$ is also $\mathcal F_T$-measurable since each of $\cC_2, \ldots, \cC_{W/2}$ is included in $R_1$. 
In particular, this explains why it is more convenient to work with $\widetilde{\mathcal U}_n^x$ instead of $\mathcal U_n^x$.) 
To see this, notice first that, since $T$ is a stopping time, conditionally on $\mathcal F_T$, the distribution of $G_n^x$ in the region $U_1\setminus B_T$ is that of an independent percolated random geometric graph. 
Thus, given that the first two conditions are satisfied, the third one is achieved at a constant cost independently of $\mathcal F_T$. Otherwise said, there is $\eps' > 0$, such that
\[\mathbb P(\mathcal E_n^x \mid \mathcal F_T)\cdot \mathds{1}_{\widetilde{\mathcal U}_n^x\cap \{T<\infty\}} \ge \eps'\cdot \mathbb P(\mathcal E_{n,1}^x\cap \mathcal E_{n,2}^x \mid \mathcal F_T)\cdot \mathds{1}_{\widetilde{\mathcal U}_n^x\cap \{T<\infty\}}.\]
On the other hand, the first two conditions are more delicate to handle.
Observe that, by construction, conditionally on $\mathcal F_T$, the only information that we have on $Q_T$ is 
that it is part of one, two or three admissible dominos whose associated paths get connected in $Q_T$ when sprinkling $\mathrm{Po}(\lambda-\lambda')$. 
In particular, all dominos disjoint from the interior of $U_1$ that contain $Q_T$ are admissible and participate in the cycle $\cC_1$. 
This has as an important consequence that, conditionally on $G(\lambda,p)$ in $\Lambda_n^x\setminus (Q_T\cup U_1)$ and on $G(\lambda',p)$ in $Q_T$, the event that $Q_T$ indeed closes the cycle $\mathcal C_1$ when sprinkling $\mathrm{Po}(\lambda - \lambda')$ is increasing. 
Therefore, by the FKG inequality (see Lemma~\ref{lem harris}), conditionally on this event, the probability that the first two conditions of $\mathcal E_n^x$ are satisfied is greater than its corresponding probability without the conditioning. 
More precisely,
\begin{align*}
\mathbb P(\mathcal E_{n,1}^x\cap \mathcal E_{n,2}^x \mid \mathcal F_T)\cdot \mathds{1}_{\widetilde{\mathcal U}_n^x\cap \{T<\infty\}}
&= \mathbb P(\mathcal E_{n,1}^x\mid \mathcal F_T)\cdot \mathbb P(\mathcal E_{n,2}^x\mid \mathcal E_{n,1}^x, \mathcal F_T)\cdot \mathds{1}_{\widetilde{\mathcal U}_n^x\cap \{T<\infty\}}\\
&\ge \left(1-\exp\left(-\frac{\lambda-\lambda'}{5}\right)\right)^{|\cP|} p^{2|\cP|}\cdot p\cdot  \mathds{1}_{\widetilde{\mathcal U}_n^x\cap \{T<\infty\}},
\end{align*}
where we used that there are at most $2|\cP|$ edges between adjacent squares in the tessellation of $Q_T$ in the definition of $\cE_{n,1}^x$. This proves our claim~\eqref{claim.Enx}.

Now, letting $\widetilde{\mathcal B}_n^x := \bigcap_{i=2}^{W/2} \mathcal B_{n,i}^x$ and observing that it is $\mathcal F_T$-measurable, we can write
$$\mathbb P(\cB_n^x \cap \{T < \infty\})\le \mathbb P(\overline{\mathcal E_n^x}\cap \widetilde{\mathcal B}_n^x\cap \{T<\infty\}) \le \mathbb E\Big[\mathbb P(\overline{\mathcal E_n^x}\mid \mathcal F_T) \cdot \mathds{1}_{\widetilde{\mathcal B}_n^x\cap \{T<\infty\}} \Big] \le (1-\varepsilon) \mathbb P(\widetilde{\mathcal B}_n^x\cap \{T<\infty\}),$$
where for the last inequality we used that the event $\mathcal E_n^x$ has the same distribution conditionally on $\widetilde{\mathcal B}_n^x$ and $\widetilde{\mathcal U}_n^x$, respectively, and we can use claim~\eqref{claim.Enx} with $\mathds{1}_{\widetilde{\mathcal B}_n^x\cap \{T<\infty\}}$ instead of $\mathds{1}_{\widetilde{\mathcal U}_n^x\cap \{T<\infty\}}$; indeed, the fact whether or not there are edges between paths starting from $x$ and some cycle among $\mathcal C_2, \ldots, \mathcal C_{W/2}$ is independent from the state of the edges between these paths and $\mathcal C_1$.
Hence, by~\eqref{eq inclusion} and an immediate induction we deduce that 
$$\mathbb P\big(\mathcal O_n^x\cap \mathcal B_n^x \big)\le \mathbb P(\cB_n^x \cap \{T < \infty\}) \le (1-\varepsilon)^{W/2}.$$ 
Then, by choosing the constant $C_1$ (defined in the beginning of the proof) large enough, we can make the previous bound $o(n^{-1})$, as desired.
\end{proof}

As a consequence of Claim~\ref{cl:1}, we get
$$ \mathbb P\left(\bigcup_{X\in \mathrm{Po}(\lambda)\cap \Lambda_n} \mathcal O_n^X\cap \mathcal B_n^X\right) = o(1).$$ 
Together with \eqref{probaAn} this shows that, with probability going to 1 as $n\to \infty$, any connected component which is not connected to $\widehat{G}_n$ is such that it does not exit the box $\Lambda_n^X$ for any of the vertices $X$ in this component. To conclude, it suffices to observe that, by concentration of Poisson random variables, it is very unlikely that one of these boxes contains much 
more than $(\log n)^2$ points. Indeed, applying again Lemma~\ref{campbell-mecke} gives 
\begin{align*}
& \mathbb P\Big(\bigcup_{X\in \mathrm{Po}(\lambda)\cap \Lambda_n} \big\{|G_n \cap \Lambda_n^X|> 2\lambda (8C_1\log n)^2\big\}\Big)  \le 
\mathbb E\Big[\sum_{X\in  \mathrm{Po}(\lambda)\cap \Lambda_n}  \mathds{1}_{|G_n \cap \Lambda_n^X|> 2\lambda (8C_1\log n)^2}  \Big]  \\ 
=\; 
&\lambda \int_{\Lambda_n} \mathbb P\big(|G_n^x \cap \Lambda_n^x|> 2\lambda (8C_1\log n)^2\big) \, dx  \le \lambda n \mathbb P\big(|G\cap \Lambda(4C_1\log n)|> 2\lambda(8C_1\log n)^2-1\big) \\
=\; 
&o(1),
\end{align*}
where we use Lemma~\ref{Chernoff poisson} for the last equality. This concludes the proof of the proposition.  
\end{proof}

\begin{remark}\label{rem stretched exp decay}
Apart from the size of the second-largest component in $G_n$, with minor modifications the proof also implies a stretched exponential decay of $|\cC_1(\bo)|$ in the infinite-volume limit conditionally on $|\cC_1(\bo)| < \infty$. As above, the key point is to establish an upper bound on the diameter of $\cC_1(\bo)$. By similar arguments one may show that the event $\cE_n$ that `$A(n, 2n) = \Lambda(2n)\setminus \Lambda(n)$ contains $\Theta(n)$ cycles in $G(\lambda',p)$ (with the notation from the previous proof) that are pairwise at distance at least 1 from each other which all participate in the infinite component of $G(\lambda',p)$' is satisfied with probability $1 - \exp(-\Omega_{\lambda, p}(n))$. Thus, partitioning $\mathbb R^2\setminus \Lambda(1)$ into the annuli $(A(2^{k}, 2^{k+1}))_{k\ge 0}$ and applying the argument from the proof of Proposition~\ref{prop:second.upper} to each of them shows that, conditionally on $\cE_{2^k}$,
the event $\bo\xleftrightarrow{G(\lambda, p)} (\mathbb R^2\setminus \Lambda(2^k))$ and simultaneously $|\cC_1(\bo)| < \infty$ holds with probability at most $\exp(-\Omega_{\lambda, p}(2^k))$. Hence, since $\Lambda(2^{\lfloor \log n\rfloor - 2})$ is included in the ball of radius $n/2$ around $\bo$, the probability that the Euclidean diameter of $\cC_1(\bo)$ is at least $n$ is at most
\begin{align*}
\mathbb P(\cC_1(\bo)\cap (\mathbb R^2\setminus \Lambda(2^{\lfloor \log n\rfloor - 2}))\neq \emptyset) 
&\le\; \mathbb P(\cC_1(\bo)\cap (\mathbb R^2\setminus \Lambda(2^{\lfloor \log n\rfloor - 2}))\neq \emptyset\mid \cE_{2^{\lfloor \log n\rfloor - 1}}) + \mathbb P(\overline{\cE_{2^{\lfloor \log n\rfloor - 1}}})\\
&=\; \exp(-\Omega_{\lambda, p}(n)).
\end{align*}
\end{remark}

\subsection{\texorpdfstring{The convergence of $(n^{-1} L_1(G_n))_{n\ge 1}$.}{}}\label{sec:convergence}
First, we recall that the convergence in probability of $(n^{-1} L_1(G_n))_{n\ge 1}$ to $\lambda \theta(\lambda,p)$ is proved in \cite{PenroseDraft}. On the other hand, $L_1(G_n)$ is bounded by the total number of points of 
$\mathrm{Po}(\lambda)$ in $\Lambda_n$ (which is distributed as a Poisson random variable with parameter $\lambda n$), 
so this sequence is also bounded in $L^p$ for all $p\ge 1$ (e.g. as a consequence of Lemma~\ref{Chernoff poisson}). Therefore, the 
convergence to 
$\lambda \theta(\lambda,p)$ holds in fact in $L^p$ for all $p\ge 1$.

\vspace{0.2cm}

We now prove the almost sure convergence by using the results of Section~\ref{sec.second}. Fix $\delta <1/2$, and for any large integer $n$, let $\cT_n$ be a tessellation of $\Lambda_n$ into $k := \lfloor n^{(1-\delta)/2}\rfloor^2$ squares with volume roughly $n^\delta$. Then, consider the event $\mathcal A_n$ that: 
\begin{itemize}
    \item in each square in $\cT_n$, the largest component has a size of order at least $c n^{\delta}$, where $c > 0$ is some sufficiently small constant, and any other component has diameter at most $(\log n)^2$; 
    \item in $\Lambda_n$, the second-largest component has size at most $(\log n)^3$.
\end{itemize}
The proof of Proposition~\ref{prop:second.upper} shows that, if $c$ is sufficiently small, $\cA_n$ holds with probability $1 - o(n^{-2})$. 

Moreover, observe that on the event $\mathcal A_n$, $L_1(G_n)$ is equal to the sum of $k$  independent terms $(X_i)_{i=1}^k$, all distributed as $L_1(G_{n^\delta})$, up to some error term, which is due to the components lying in the region $R$ of all points in $\Lambda_n$ at distance at most $(\log n)^2$ from the boundaries of the squares of $\mathcal T_n$. For all $i\in \{1,\dots,k\}$, let us denote $Y_i = \min\{n^{-\delta} X_i, 2\}$, and $Y = Y_1 + \ldots + Y_k$. Then, since $X_i\ge cn^{\delta}$ on the event $\cA_n$, $\mathbb E Y_i= \Theta(1)$ for any $i\in \{1,\dots,k\}$. Therefore, by Hoeffding's inequality (which is a version of Chernoff's inequality for bounded random variables, see for example~\cite{Hoeffding}), $\Pr(|Y-\mathbb E Y|\ge \frac{k}{\log n}) = \exp(-n^{\Omega(1)})$. At the same time, $Y_i = n^{-\delta} X_i$ on the event that the $i$-th square in the tessellation contains at most $2n^{\delta}$ points, which happens with probability at least $1-\exp(-n^{\Omega(1)})$ (see Lemma~\ref{Chernoff poisson}), and in particular $\mathbb E Y = n^{-\delta}\mathbb E X + o(1)$. As a consequence,
\begin{align*}
\Pr\left(|X-\mathbb E X|\ge \frac{n}{\log n}\right)
&\le\; \Pr\left(|Y-\mathbb E Y|\ge \frac{k}{2\log n}\right) + \sum_{i=1}^k \Pr(Y_i \neq n^{-\delta} X_i)\\
&\le\; \Pr\left(|Y-\mathbb E Y|\ge \frac{k}{2\log n}\right) + k\exp(-\Omega(n^{\delta})) = \exp(-n^{\Omega(1)}).
\end{align*}

\noindent
On the other hand, the event $\cD_n$ that there are at most $n^{1-\delta/2} (\log n)^3$ points in $R$ (which has area $O(n^{1-\delta/2} (\log n)^2)$) holds with probability $1 - o(n^{-2})$.

Finally, together with the fact that 
\begin{equation*}
\frac{1}{n} \mathbb E\left[X\right] \to \lambda \theta(\lambda) \text{ as } n\to \infty,
\end{equation*}
we deduce that,
\begin{equation*}
\mathbb P\left(|L_1(G_n) - \mathbb E L_1(G_n)|\ge \frac{n}{2 \log n}\right)\le \mathbb P\left(\left|X - \mathbb E\left[X\right]\right|\ge \frac{n}{\log n}\right) + \mathbb P(\overline{\mathcal A_n}) + \mathbb P(\overline{\mathcal D_n}) = o(n^{-2}),
\end{equation*}
and the fact that $\sum_{n\ge 1} n^{-2} < \infty$ together with the Borel-Cantelli lemma concludes the proof of the almost sure  convergence.

\paragraph{Acknowledgements.} The authors are grateful to the two anonymous referees for several useful comments and suggestions.

\bibliographystyle{plain}
\bibliography{Refs}
\end{document}